\documentclass[a4paper,reqno]{amsart}

\title{Elliptic Boundary Value Problems and \\Partial Group Actions}
\author[E. Ewert]{Eske Ewert}
\address{Eske Ewert. 
Institute of Analysis, Leibniz University Hannover,  Welfengarten 1, 30167 Hannover, Germany}
\email{ewert@math.uni-hannover.de}

\author[A.Yu. Savin]{Anton Yu. Savin}
 \address{Anton Yu. Savin.  
Peoples' Friendship University of Russia (RUDN University),
6 Miklukho-Maklaya St
117198 Moscow, Russia
}
\email{antonsavin@mail.ru}

\author[E. Schrohe]{Elmar Schrohe}
 \address{Elmar Schrohe. 
Institute of Analysis, Leibniz University Hannover,  Welfengarten 1, 30167 Hannover, Germany}
\email{schrohe@math.uni-hannover.de}

\usepackage[utf8]{inputenc}
\usepackage[english]{babel}

\usepackage{amsmath}
\usepackage{amssymb}
\usepackage[alphabetic]{amsrefs}
\usepackage[top=0.5in,margin=1.2in,includeheadfoot,bottom=1in]{geometry} 
\usepackage{lmodern}
\usepackage{hyperref}
\usepackage[capitalize]{cleveref}
\usepackage{mathrsfs}
\usepackage[norefs]{refcheck}

\usepackage{tikz-cd}
\usetikzlibrary{babel}

\usepackage{mathtools, tensor}
\usepackage{thmtools}

\newtheorem{theorem}{Theorem}[section]
\newtheorem{lemma}[theorem]{Lemma}
\newtheorem{proposition}[theorem]{Proposition}
\newtheorem{corollary}[theorem]{Corollary}

\theoremstyle{definition}
\newtheorem{definition}[theorem]{Definition}
\newtheorem{assumption}[theorem]{Assumption}

\theoremstyle{remark}
\newtheorem{remark}[theorem]{Remark}
\newtheorem{example}[theorem]{Example}
\usepackage[shortlabels]{enumitem}
\setlist[enumerate,1]{label=\textup{(\roman*)}}
\setlist[enumerate,2]{label=\textup{(\alph*)}}

\newcommand{\comment}[1]{} 

\DeclarePairedDelimiter{\abs}{\lvert}{\rvert}
\DeclarePairedDelimiter{\norm}{\lVert}{\rVert}
\DeclarePairedDelimiter\bra{\langle}{\rvert}
\DeclarePairedDelimiter\ket{\lvert}{\rangle}
\DeclarePairedDelimiterX\braket[2]{\langle}{\rangle}{#1\,\delimsize\vert\,\mathopen{}#2}

\newcommand{\ftimes}[2]{\mathbin{\tensor[_{#1}]{\mathbin{\times}}{_{#2}}}}

\newcommand{\C}{\mathbb{C}}
\newcommand{\N}{\mathbb{N}}
\newcommand{\Z}{\mathbb{Z}}
\newcommand{\R}{\mathbb{R}}

\newcommand{\Hils}{\mathcal{H}}
\newcommand{\Bounded}{\mathbb{B}}
\newcommand{\Compact}{\mathbb{K}}
\newcommand{\Mat}{\mathbb{M}}
\newcommand{\Schwartz}{\mathcal{S}}
\newcommand{\nlBM}{\overline{\Psi_\Gamma(Y,\partial Y)}}

\DeclareMathOperator{\id}{id}
\DeclareMathOperator{\interior}{int}
\DeclareMathOperator{\boundary}{{b}}
\DeclareMathOperator{\vol}{vol}
\DeclareMathOperator{\Image}{Im}
\DeclareMathOperator{\Ell}{Ell}

\DeclareMathOperator{\End}{End}
\DeclareMathOperator{\supp}{supp}
\DeclareMathOperator{\ev}{ev}
\DeclareMathOperator{\pr}{pr}

\DeclareMathOperator{\Prim}{Prim}
\DeclareMathOperator{\smooth}{sm}

\begin{document}

\begin{abstract}
We consider a smooth compact manifold with boundary, $M$,  embedded in
a smooth manifold of the same dimension on which an amenable group
$\Gamma$ acts by isometries. We do not assume $M$ to be invariant
under $\Gamma$. This results in a {\em partial action} of $\Gamma$ on
$M^\circ$: For $g\in \Gamma$ we let $M^\circ_g = g(M^\circ)\cap
M^\circ$ and obtain diffeomorphisms $g:M^\circ_{g^{-1}} \to
M^\circ_g$.

We assume that any two images of $\partial M$ under $ \Gamma$ either
coincide or are disjoint and that only finitely many lie in $M$. The
spherical blow-up of these images of $\partial M$ in $M$ yields a
manifold $Y$ with  boundary consisting of finitely many components.
Moreover, $Y$ inherits a partial action of~$\Gamma$.

We can then define the $C^*$-algebra $\mathcal
A=\overline{\Psi_\Gamma(Y,\partial Y)}$ of operators on $L^2(Y)\oplus
L^2(\partial Y)$, generated by the algebra $\Psi(Y,\partial Y)$ of
operators of order and type zero in Boutet de Monvel's calculus on $Y$
and partial isometries associated with the partial action. Denote by
$\Sigma=\overline{\Psi(Y,\partial Y)}/\Compact $ the symbol space.
If the partial action of $\Gamma$ on $\Prim(\Sigma)$ is topologically
free, we find a criterion for the Fredholm property of the operators
in $\overline{\Psi_\Gamma(Y,\partial Y)}$.

Moreover, we obtain the classification of the elliptic elements in
$\overline{\Psi_\Gamma(Y,\partial Y)}$ modulo stable homotopies:
For $\mathcal A_0= C(Y\sqcup \partial Y)\rtimes\Gamma$
$$\Ell(\mathcal A_0,\mathcal A)\cong K_0(C_0(T^*Y^\circ)\rtimes\Gamma
)\oplus K_0(C(\partial  Y)\rtimes \Gamma).$$
If $\Gamma$ is finitely generated and of polynomial growth, then the
elements associated with the second summand do not contribute to the
index. \\[.3ex]
{\bf Keywords:} Boundary value problems, partial group actions,
Fredholm operators.
\end{abstract} 
\subjclass{58J32, 58J40, 47A53}
\maketitle
\tableofcontents

\section{Introduction}

Associated with the action \(\theta\) of a group \(\Gamma\) on a manifold \(M\) is a class of operators generated by differential operators and shift operators along the orbits. More precisely, operators in this class have the form of finite linear combinations
$$
D=\sum_{g \in \Gamma} D_g T_g: C^\infty(M)\longrightarrow C^\infty(M),
$$
where $\{D_g\}$ is a collection of differential operators on the manifold $M$, parametrized by elements of the group $\Gamma$, and $(T_g u)(x)=u(\theta_{g^{-1}}(x))$ is a shift operator. Operators of this type, hereinafter called {\em $\Gamma$-operators} for brevity, arise in non-commutative geometry and mathematical physics, see, e.g., \cite{Ant4,Con4,Con94,CoLa2,OnSk2}. The elliptic theory of such operators on smooth compact closed manifolds has been studied in sufficient detail. For example, conditions for the ellipticity of operators were obtained, the corresponding criteria for the Fredholm property were established~\cite{AL93,AnLe2,BSS26} and index formulas were  established \cite{NSS08,SaSt39,AbSa24,Per3,Per5}; generalizations to more general operators are considered in \cite{PeRo1,SaSch1,GKN1,SaSchr22}. The symbol algebras of such operators are {\em crossed products} of the symbol algebra of ordinary pseudodifferential operators with the group acting on this algebra by automorphisms. Therefore, the above results are closely related to results in the theory of crossed products; see, for example,~\cite{Ped1,BaCo2,AL93, Bla98}.

$\Gamma$-operators were also considered on manifolds with boundary; see \cite{Ant1, AnLe4}. More precisely, here it is assumed that $\Gamma$ acts on a manifold with boundary (in particular, it maps the boundary to itself). Instead of the algebra of pseudodifferential operators, the algebra of Boutet de Monvel matrix operators on a manifold with boundary is considered, see \cite{BM71,RS82,Gru96,S01}. The technique of crossed products also works in this situation. In the cited works, conditions for ellipticity and a criterion for the Fredholm solvability of problems have been established. For elliptic $\Gamma$-boundary value problems, a classification up to stable homotopies is obtained, and index formulas were shown in  \cite{SaSt45,BS21,BolSa3}. These results are generalizations of well-known results for the original Boutet de Monvel algebra~\cite{Fds17,MNS03,MSS06}.

However, on manifolds with boundary, a fundamentally different geometric situation arises and new classes of $\Gamma$-operators appear in applications (in plasma theory \cite{BiSo3},  sandwich shells and plates in engineering~\cite{OnSk1}). Namely, the manifold with boundary $M$ is a submanifold in  some ambient manifold $W$, the group $\Gamma$ acts on the ambient manifold $W$, and the submanifold $M\subset W$ is not invariant, i.e., orbits of some of the points in   $M$ are not contained in $M$ (see \cite{Sku16,Ross1,SaSt43,Sav18,BNSS22}). In this case, there is no longer a group action on the submanifold $M$, but a delicate geometric structure is defined, which is called a {\em partial group action}. Let \(X=M^\circ\) denote the interior of \(M\). For each element   $g\in \Gamma$, the set $X_g=\theta_g(X)\cap X$ is defined, and the element $g$ determines a bijection $\theta_g: X_{g^{-1}}\to X_g$. 

Our aim in this paper is to construct an elliptic theory for partial group actions on a manifold with boundary. We define a $C^*$-algebra of pseudodifferential boundary value problems with shifts associated  with a partial action, prove a criterion for the Fredholm solvability and, finally, obtain a classification of Fredholm boundary value problems modulo stable homotopies.  

Let us briefly describe the contents of the paper. We start by recalling in \cref{sect:PartialGroupAction} and~\cref{sect:GroupoidAlgebra} the necessary facts about partial actions of groups on topological spaces and $C^*$-algebras as well as the  theory of partial crossed products associated with a partial action developed in~\cite{Exe94, Mcc95, Exe98, Leb05,Exe17}.

We suppose that the action of  $\Gamma$ is isometric and the following regularity condition is satisfied: There are only finitely many {\em different}  sets $M_g$ and all of them are smooth manifolds with boundary. 

This regularity assumption is satisfied  for instance in the following setting. Consider the finite cylinder $M=\mathbb{S}^1\times [0,l]$, of length $l>1$, as a submanifold with boundary of the infinite cylinder $W=\mathbb{S}^1\times\mathbb{R}$, on which the group $\Gamma=\mathbb Z$ acts by shifts  $\theta_k:(x,t)\mapsto (x,t+k)$ for $k\in \mathbb Z $.

%

In \cref{sec:closed} we show that given an  action of a group $\Gamma$ on a manifold $W$  and an arbitrary compact submanifold with boundary $M\subset W$ of codimension zero,  we have an induced partial action of $\Gamma$ on the interior $X=M^\circ$. 

\begin{figure}[hbt!]
	\begin{tikzpicture}[scale=1, >=stealth]
		
		\def\R{0.75}
		\def\L{6}
		\def\a{2}
		\def\b{4}
		\def\er{0.22}
		
		\newcommand{\ellipseThreeD}[1]{%
			\draw[dashed] (#1,\R) arc[start angle=90, end angle=270,
			x radius=\er, y radius=\R];
			\draw (#1,-\R) arc[start angle=270, end angle=450,
			x radius=\er, y radius=\R];
		}
		
		\fill[red!60]  (0,-\R) rectangle (\a,\R);
		\fill[blue!45] (\b,-\R) rectangle (\L,\R);
		
		\fill[red!35]  (0,0) ellipse[x radius=\er, y radius=\R];
		\fill[red!60]  (\a,0) ellipse[x radius=\er, y radius=\R];
		\fill[blue!25] (\b,0) ellipse[x radius=\er, y radius=\R];
		\fill[blue!45] (\L,0) ellipse[x radius=\er, y radius=\R];
		
		\draw (0,\R) -- (\L,\R);
		\draw (0,-\R) -- (\L,-\R);
		
		\foreach \x/\lab in {
			0/{0},
			\a/{\frac12},
			\b/{1},
			\L/{\frac32}
		}{
			\ellipseThreeD{\x}
			\node[below] at (\x,-0.8) {$\lab$};
		}
		
		\node at (1,0) {$X_{-1}$};
		\node at (5,0) {$X_1$};
		
		\draw[->, thick, bend left=18]
		(1,1.45)
		to node[above] {$\theta_1$}
		(5,1.45);
		
		\draw[->, thick, bend left=18]
		(5,-1.45)
		to node[below] {$\theta_{-1}$}
		(1,-1.45);
		
	\end{tikzpicture}
	\caption{Partial group action of \(\Z\) on the open cylinder \(X=S^1\times(0,\tfrac32)\) }
\end{figure}

We introduce a regularity condition (Assumption~\ref{ass:regularity1}) under which the set $\Gamma(\partial M)\cap M$  is a smooth closed submanifold in $M$ of codimension one.  We denote by $Y$ the spherical blowup of $M$ along $\Gamma(\partial M)\cap M$. This is a compact manifold with boundary obtained by cutting $M$ along the submanifold $\Gamma(\partial M)\cap M$). Then the partial action of $\Gamma$ on $M$ extends by continuity to a partial action on $Y$. 

\begin{figure}[hbt!]
	\begin{tikzpicture}[scale=1, >=stealth]
		
		\def\R{0.75}
		\def\L{6}
		\def\a{2}
		\def\b{4}
		\def\er{0.22}
		\def\gap{0.3}
		
		\newcommand{\ellipseThreeDCut}[1]{%
			\draw[dashed] (#1,\R) arc[start angle=90, end angle=270,
			x radius=\er, y radius=\R];
			\draw (#1,-\R) arc[start angle=270, end angle=450,
			x radius=\er, y radius=\R];
		}
		
		\fill[red!60] (0,-\R) rectangle (\a-\gap,\R);
		\fill[red!35] (0,0) ellipse[x radius=\er, y radius=\R];
		\fill[red!60] (\a-\gap,0) ellipse[x radius=\er, y radius=\R];
		
		\fill[white] (\a+\gap,-\R) rectangle (\b-\gap,\R);
		\fill[white] (\a+\gap,0) ellipse[x radius=\er, y radius=\R];
		\fill[white] (\b-\gap,0) ellipse[x radius=\er, y radius=\R];
		
		\fill[blue!45] (\b+\gap,-\R) rectangle (\L,\R);
		\fill[blue!25] (\b+\gap,0) ellipse[x radius=\er, y radius=\R];
		\fill[blue!45] (\L,0) ellipse[x radius=\er, y radius=\R];
		
		\draw (0,\R) -- (\a-\gap,\R);
		\draw (0,-\R) -- (\a-\gap,-\R);
		\ellipseThreeDCut{0}
		\ellipseThreeDCut{\a-\gap}
		
		\draw (\a+\gap,\R) -- (\b-\gap,\R);
		\draw (\a+\gap,-\R) -- (\b-\gap,-\R);
		\ellipseThreeDCut{\a+\gap}
		\ellipseThreeDCut{\b-\gap}
		
		\draw (\b+\gap,\R) -- (\L,\R);
		\draw (\b+\gap,-\R) -- (\L,-\R);
		\ellipseThreeDCut{\b+\gap}
		\ellipseThreeDCut{\L}
		
		\node[below] at (0,-0.8) {$0$};
		
		\node[below] at (\a-\gap,-0.8) {$\frac12$};
		\node[below] at (\a+\gap,-0.8) {$\frac12$};
		
		\node[below] at (\b-\gap,-0.8) {$1$};
		\node[below] at (\b+\gap,-0.8) {$1$};
		
		\node[below] at (\L,-0.8) {$\frac32$};
		
		\node at (0.8,0) {$Y_{-1}$};
		\node at (5.2,0) {$Y_1$};
		
		\draw[->, thick, bend left=18]
		(1,1.45)
		to node[above] {$\theta_1$}
		(5,1.45);
		
		\draw[->, thick, bend left=18]
		(5,-1.45)
		to node[below] {$\theta_{-1}$}
		(1,-1.45);
		
	\end{tikzpicture}
	\caption{Partial group action of \(\Z\) on the cut cylinder \(Y=S^1\times\left([0,\tfrac12]\sqcup [\tfrac12, 1]\sqcup [1,\tfrac32]\right)\)}
\end{figure}

In \cref{sect:BdM}, we consider $\Gamma$-boundary value problems on  $Y$: We suppose that our partial action on $Y$ is isometric and define the $C^*$-algebra $\nlBM\subset \Bounded(L^2(Y)\oplus L^2(\partial Y))$  as the $C^*$-algebra generated by the Boutet de Monvel algebra of operators of order and type zero on $Y$ denoted by $\overline{\Psi(Y,\partial Y)}$ and the shift operators associated with the partial group action on $Y$ and $\partial Y$. The Fredholm property of operators in this algebra is equivalent to the invertibility in the Calkin algebra $\nlBM/{\Compact}$, where $\Compact$ is the ideal of compact operators.  To describe this Calkin algebra, we recall the symbol isomorphism
$$
\overline{\Psi(Y,\partial Y)}/\Compact\stackrel{\sigma}\longrightarrow \Sigma\subset
C(S^*Y) \oplus C(S^*(\partial Y),\End(L^2(N\partial Y_+)\oplus\C))
$$
for the classical Boutet de Monvel algebra. Here $\Sigma$ is the $C^*$-algebra of principal symbols. Then the partial action of $\Gamma$ on $Y$ naturally extends to a partial action on the $C^*$-algebra $\Sigma$ and the partial crossed product $\Sigma\rtimes \Gamma$ is defined.  Under the assumption that $\Gamma$ acts topologically freely on the primitive ideal space $\Prim \Sigma$ (see~\cite{Leb05}) and $\Gamma$ is amenable,  we construct a $C^*$-algebra isomorphism
$$
\nlBM/\Compact \stackrel{\widetilde{\sigma}}\longrightarrow \Sigma\rtimes\Gamma.
$$ 
This isomorphism gives the corresponding criterion for the Fredholm property. 

\cref{sec:partial-BM} and \cref{sect:Ktheory}  are devoted to obtaining the classification of elliptic $\Gamma$-boundary value problems modulo stable homotopies. Here we consider two algebras $\mathcal{A}_0\subset \mathcal{A}$
$$
\mathcal{A}=\nlBM,\qquad  \mathcal{A}_0=C(Y\sqcup \partial Y)\rtimes\Gamma.
$$
Following \cite{Sav05},  we consider in \cref{sect:homotopy}  the Abelian group  \(\Ell(\mathcal A_0,\mathcal A)\) generated by 
triples $(D,P,Q)$, where $P,Q$ are matrix projections over $\mathcal{A}_0$, while $D$ is a matrix operator over $\mathcal A$ that is elliptic as an operator acting between the ranges of $P$ and $Q$.  These triples are considered modulo stable homotopies.  This  group is naturally isomorphic to the $K_0$-group of the mapping cone of the inclusion $\mathcal{A}_0\subset \mathcal{A}/\mathbb{K}$. We then use the $K$-theoretic approach developed in~\cite{MNS03,MSS06} and obtain the following isomorphism
\begin{equation*}
	\Ell(\mathcal A_0,\mathcal{A})\simeq 	K_0(C_0(T^*Y^\circ)\rtimes\Gamma)\oplus K_0(C(\partial Y)\rtimes\Gamma),
\end{equation*}
which is the main result of this paper. Here elements in the $\Ell$-group corresponding to the first summand have representatives $(D,P,Q)$, where $D$ is trivial near the boundary $\partial Y$ (more precisely, near the boundary it  is a matrix operator over $\mathcal{A}_0$). If, in addition, $\Gamma$ is finitely generated and of polynomial growth, we show that  elements corresponding to the second summand have as representatives special $\Gamma$-boundary value problems of index zero and thus do not contribute to the index.  This result is similar to Boutet de Monvel's index theorem, see~\cite{BM71,RS82}.

\section{Partial group actions and their groupoids}\label{sect:PartialGroupAction}
	In the following, \(\Gamma\) is a countable discrete group which acts on a locally compact Hausdorff space~\(W\) by homeomorphisms. Let \(\alpha\colon\Gamma\curvearrowright W\) denote the action and write \(\alpha_g(x)=g\cdot x\).
	\begin{definition}
		The \emph{action groupoid} of \(\Gamma\curvearrowright W\) denoted by \(\Gamma\ltimes W\) is as a space \(\Gamma\times W\) with unit space \((\Gamma\ltimes W)^{(0)}=W\). The range and source maps \(r,s\colon \Gamma\ltimes W\to W\) are defined by 
		\begin{equation*}
			s(g,x)=x, \qquad r(g,x)=g\cdot x.
		\end{equation*}
		The groupoid multiplication is the map \((\Gamma\ltimes W)^{(2)}=(\Gamma\ltimes W)\ftimes{s}{r}(\Gamma\ltimes W)\to\Gamma\ltimes W\) given by \begin{equation*}
			(h,g\cdot x)\cdot(g,x)=(hg,x).
		\end{equation*}
		The inversion \(\Gamma\ltimes W\to \Gamma\ltimes W\) is given by \((g,x)^{-1}=(g^{-1},g\cdot x)\).
	\end{definition}
	It is a topological groupoid with respect to the product topology on \(\Gamma\ltimes W\).
	Now, we assume that \(X\subset W\) is an open subset, not necessarily invariant under \(\Gamma\).  
	Then the groupoid structure on \(\Gamma\ltimes W\) restricts to the subset
	\begin{equation*}
		(\Gamma\ltimes W)^X_X=\{(g,x)\in\Gamma\ltimes W\colon r(g,x),s(g,x)\in X\}=\{(g,x)\in\Gamma\times X\colon g\cdot x\in X\}.
	\end{equation*}
	We shall use the notation \(\Gamma \ltimes X\) for this groupoid, even though it is not an action groupoid unless \(X\) is \(\Gamma\)-invariant. 
	
	The setting also fits in the framework of partial group actions, see \cites{Exe98,Mcc95}.
	\begin{definition}
		Let \(\Gamma\) be a countable discrete group and \(X\) a locally compact Hausdorff space. Then a partial action of \(\Gamma\) on \(X\) is given by a family of open subsets \((X_g)_{g\in\Gamma}\) and homeomorphisms \((\theta_g\colon X_{g^{-1}}\to X_{g})_{g\in\Gamma}\) such that
		\begin{enumerate}
			\item \(X_e=X\) and \(\theta_e=\id\),
			\item\label{item:images} \(\theta_g(X_{g^{-1}}\cap X_h)\subseteq X_g\cap X_{gh}\) for all \(g,h\in\Gamma\),
			\item \(\theta_g(\theta_h(x))=\theta_{gh}(x)\) for all \(g,h\in \Gamma\) and \(x\in X_{h^{-1}}\cap X_{h^{-1}g^{-1}}\).
		\end{enumerate}
	\end{definition}
Note that the axioms imply that \ref{item:images} is actually an equality of sets. 
\begin{lemma}\label{res:action-restricts-to-partial-action}
	Let \(\alpha\colon \Gamma\curvearrowright W\) be a group action of a discrete group on a locally compact Hausdorff space by homeomorphisms and \(X\subseteq W\) open. Let \(X_g =\{x\in X\colon g^{-1}\cdot x\in X\}\) and let \(\theta_g\) be the restriction of the homeomorphism \(\alpha_g\colon W\to W\) to \(X_{g^{-1}}\to X_{g}\). Then this defines a partial group action of \(\Gamma\) on \(X\).
\end{lemma}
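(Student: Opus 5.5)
The plan is to verify the three axioms of a partial action directly from the definitions. The key observation is that \(X_g = X\cap \alpha_g(X)\).

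First come the preliminary facts. Since \(X\) is open and \(\alpha_g\) is a homeomorphism, \(\alpha_g(X)\) is open. Hence \(X_g=\{x\in X\colon g^{-1}\cdot x\in X\}=X\cap\alpha_g(X)\) is open. Next, \(\alpha_g\) maps \(X_{g^{-1}}=X\cap\alpha_{g^{-1}}(X)\) bijectively onto \(\alpha_g(X)\cap X=X_g\): if \(x\in X\) and \(g\cdot x\in X\), then \(g\cdot x\in X\) and \(g^{-1}\cdot(g\cdot x)=x\in X\). The inverse map is the restriction of \(\alpha_{g^{-1}}\), by the same argument with \(g\) replaced by \(g^{-1}\). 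So \(\theta_g\) is a homeomorphism \(X_{g^{-1}}\to X_g\), being the restriction of a homeomorphism of \(W\) to a subset together with its image.

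Next come the axioms. For (i), \(X_e=X\) and \(\theta_e=\alpha_e|_X=\id\) because \(e\) acts trivially.

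For (ii), take \(x\in X_{g^{-1}}\cap X_h\), so \(x\in X\), \(g\cdot x\in X\) and \(h^{-1}\cdot x\in X\). Then \(\theta_g(x)=g\cdot x\in X_g\) by the previous paragraph. Moreover \(g\cdot x\in X\) and \((gh)^{-1}\cdot(g\cdot x)=h^{-1}\cdot x\in X\), so \(g\cdot x\in X_{gh}\).

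For (iii), take \(x\in X_{h^{-1}}\cap X_{h^{-1}g^{-1}}\). Then \(\theta_h(x)=h\cdot x\) is defined and lies in \(X_h\). It also lies in \(X_{g^{-1}}\), since \(g\cdot(h\cdot x)=(gh)\cdot x\in X\) because \(x\in X_{(gh)^{-1}}\). So \(\theta_g(\theta_h(x))\) is defined. The action axiom then gives \(\theta_g(\theta_h(x))=g\cdot(h\cdot x)=(gh)\cdot x=\theta_{gh}(x)\).

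There is no real obstacle here. The only point needing care is bookkeeping: one has to check that each composite appearing in (ii) and (iii) is actually defined, that is, that the intermediate points lie in the correct domains \(X_{g^{-1}}\). This reduces to repeatedly unwinding the membership condition \(y\in X_k \iff y\in X \text{ and } k^{-1}\cdot y\in X\).
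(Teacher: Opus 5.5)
Your verification is correct and complete: the identity \(X_g=X\cap\alpha_g(X)\) gives openness of \(X_g\) and the homeomorphism \(X_{g^{-1}}\to X_g\), and your domain bookkeeping in (ii) and (iii) is right. The paper states this lemma without proof, leaving it as a routine check, and your axiom-by-axiom verification is exactly that check. You could also note that \(X\), being open in \(W\), is itself locally compact Hausdorff, as the definition of a partial action requires.
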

We give some examples, to which we frequently come back in the following.

\begin{example}\label{ex:shifts}
	Let \(W=S^1\times\R\) be an infinite cylinder and let \(\Gamma=\Z\) act on it by shifts \(\alpha_k(x,t)=(x,t+k)\) for \(k\in\Z\) and \((x,t)\in S^1\times\R\). Consider the finite cylinder \(X=S^1\times(0,\tfrac{3}{2})\). By Lemma \ref{res:action-restricts-to-partial-action},  \(X_{-1}=S^1\times(0,\tfrac{1}{2})\), \(X_0=X\), \(X_1=S^1\times(1,\tfrac{3}{2})\) and \(X_k=\emptyset\) for all \(k\notin\{-1,0,1\}\).
\end{example}

\begin{example}\label{ex:shifts-var-2}
	As a variant of \cref{ex:shifts} let \(\Gamma=\Z_2\times\Z\) act on \(W=S^1\times\R\) by \(\alpha_{(\overline p, k)}(x,t)=(e^{i\pi p}x,t+k)\) for \(k\in\Z\), \(\overline p\in\{\overline 0,\overline 1\}\) and \((x,t)\in S^1\times\R\). Consider the finite cylinder \(X=S^1\times(0,\tfrac{3}{2})\). By Lemma \ref{res:action-restricts-to-partial-action},  
\(X_{\overline 0, -1}=S^1\times (0,\tfrac{1}{2})= X_{\overline 1, -1}\), \(X_{\overline 0,0}=X=X_{\overline1,0}\), \(X_{\overline 0,1}=S^1\times (1,\tfrac{3}{2})=X_{\overline 1,1}\) and \(X_{\overline p, k}=\emptyset\) for all \((\overline p,k)\notin\{\overline 0, \overline 1\}\times\{-1,0,1\}\).
\end{example}

\begin{example}[\cite{Sav18}]\label{ex:disk_skalings} Let \(W=\R^2\) and \(\Gamma=\Z\) act by scalings \(\alpha_k(x)=2^{-k}x\) for \(k\in\Z\) and \(x\in\R^2\). Let \(X=\mathbb D=\{x\in\R^2\colon \norm{x}<1\}\) be the open unit disk. Then \(X_k=\{x\in\R^2\colon \norm{x}<2^{-k}\}\) for \(k\in\N_0\) and \(X_k=X\) for \(k\in-\N\).
\end{example}
For a partial action of \(\Gamma\) on \(X\) one can define a topological groupoid, which coincides with \((\Gamma\ltimes W)^X_X\) in the situation of \cref{res:action-restricts-to-partial-action}. Namely, let
\[\Gamma\ltimes X =\{(g,x)\in \Gamma\times X\colon x\in X_{g^{-1}}\}\] be equipped with the range and source maps \(r,s\colon\Gamma\ltimes X\to X\) defined by \(s(g,x)=x\) and \(r(g,x)=\theta_g(x)\). The multiplication \((h,\theta_g(x))(g,x)=(hg,x)\) and inversion \((g,x)^{-1}=(g^{-1},\theta_g(x))\) are well-defined by the properties of partial actions.
\begin{remark}\label{rem:etale}
	Note that the groupoid \(\Gamma\ltimes X\) is \'{e}tale for the subspace topology of \(\Gamma\ltimes X\) in \(\Gamma\times X\). As \(\Gamma\) is discrete, for each \((g,x)\in \Gamma\ltimes X\) the source map \(s\) restricts to a homeomorphism between the open subsets \(\{g\}\times X_{g^{-1}}\) of \(\Gamma\ltimes X\) and \(X_{g^{-1}}\) of \(X\).
\end{remark}

More generally, there is a notion of partial group actions on \(C^*\)-algebras.
\begin{definition}[\cite{Mcc95}]
	Let \(\Gamma\) be a discrete group and \(A\) a \(C^*\)-algebra. Then a partial action of \(\Gamma\) on \(A\) is given by a family of closed ideals \((A_g)_{g\in\Gamma}\) and \(^*\)-isomorphisms \((\theta_g\colon A_{g^{-1}}\to A_{g})_{g\in\Gamma}\) such that
	\begin{enumerate}
		\item \(A_e=A\) and \(\theta_e=\id\),
		\item \(\theta_g(A_{g^{-1}}\cap A_h)\subseteq A_g\cap A_{gh}\) for all \(g,h\in\Gamma\),
		\item \(\theta_g(\theta_h(a))=\theta_{gh}(a)\) for all \(g,h\in \Gamma\) and \(a\in A_{h^{-1}}\cap A_{h^{-1}g^{-1}}\).
	\end{enumerate}
\end{definition}
\begin{definition}\label{def:equivariant_homomorphism}
	Let \(A,B\) be \(C^*\)-algebras with partial actions \(\theta\colon\Gamma\curvearrowright A\) and \(\tau\colon\Gamma\curvearrowright B\). Then a \(^*\)-homomorphism \(\varphi\colon A\to B\) is called \emph{\(\Gamma\)-equivariant} if \(\varphi|_{A_g}\colon A_g\to B_g\) for all \(g\in \Gamma\) and \(\varphi(\theta_g(a))=\tau_g(\varphi(a))\) for all \(g\in \Gamma\) and \(a\in A_{g^{-1}}\).
\end{definition}
\begin{lemma}\label{res:partial-action-on-pullback}
	Let \(A,B,C\) be \(C^*\)-algebras with partial \(\Gamma\)-actions and suppose \(\varphi\colon A\to C\) and \(\psi\colon B\to C\) are \(\Gamma\)-equivariant \(^*\)-homomorphisms. Then the pullback \(A\oplus_C B\) admits a partial \(\Gamma\)-action with \((A\oplus_C B)_g=A_g\oplus_{C_g}B_g\) and \(g\cdot(a,b)=(g\cdot a,g\cdot b)\) for all \(g\in G\) and \((a,b)\in A\oplus_B C\).
\end{lemma}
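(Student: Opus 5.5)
We prove \cref{res:partial-action-on-pullback} by checking directly that the proposed data satisfy the axioms of a partial action. Write \(\theta,\tau,\rho\) for the partial actions on \(A,B,C\). The pullback is
\[
A\oplus_C B=\{(a,b)\in A\oplus B\colon \varphi(a)=\psi(b)\},
\]
and we set \((A\oplus_C B)_g=A_g\oplus_{C_g}B_g=\{(a,b)\in A_g\oplus B_g\colon \varphi(a)=\psi(b)\}\). By equivariance \(\varphi(A_g)\subseteq C_g\) and \(\psi(B_g)\subseteq C_g\), so this notation is consistent. The action is \(g\cdot(a,b)=(\theta_g(a),\tau_g(b))\) for \((a,b)\in(A\oplus_C B)_{g^{-1}}\).

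\emph{Step 1: the domains are closed ideals.} The set \((A\oplus_C B)_g=(A_g\oplus B_g)\cap(A\oplus_C B)\) is closed. Now let \((a,b)\in(A\oplus_C B)_g\) and \((x,y)\in A\oplus_C B\). The product \((ax,by)\) lies in \(A_g\oplus B_g\) because \(A_g\) and \(B_g\) are ideals. It also satisfies \(\varphi(ax)=\varphi(a)\varphi(x)=\psi(b)\psi(y)=\psi(by)\), so it lies in the pullback. The same argument works for multiplication on the other side and for adjoints.

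\emph{Step 2: the maps are well defined \(^*\)-isomorphisms.} For \((a,b)\in(A\oplus_C B)_{g^{-1}}\) we have \(\theta_g(a)\in A_g\), \(\tau_g(b)\in B_g\), and
\[
\varphi(\theta_g(a))=\rho_g(\varphi(a))=\rho_g(\psi(b))=\psi(\tau_g(b)).
\]
Here we use that \(\varphi(a)\in C_{g^{-1}}\), so \(\rho_g\) is defined on it. Hence the map lands in \((A\oplus_C B)_g\), and it is clearly a \(^*\)-homomorphism. Applying the same argument to \(g^{-1}\) produces \((\theta_{g^{-1}},\tau_{g^{-1}})\), which is an inverse because \(\theta_{g^{-1}}=\theta_g^{-1}\) and \(\tau_{g^{-1}}=\tau_g^{-1}\). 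This identity follows from the partial action axioms (iii) with \(h=g^{-1}\), together with (i).

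\emph{Step 3: the axioms.} Axiom (i) is immediate. For (ii), we need
\[
(A_{g^{-1}}\oplus_{C_{g^{-1}}}B_{g^{-1}})\cap(A_h\oplus_{C_h}B_h)=(A_{g^{-1}}\cap A_h)\oplus_C(B_{g^{-1}}\cap B_h)
\]
inside \(A\oplus_C B\). Both sides consist of the pairs \((a,b)\) with \(a\in A_{g^{-1}}\cap A_h\), \(b\in B_{g^{-1}}\cap B_h\) and \(\varphi(a)=\psi(b)\), so they agree. The inclusion in (ii) then follows componentwise from (ii) for \(A\) and \(B\), combined with Step 2, which places the image in the pullback. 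Axiom (iii) holds componentwise, because the domain intersection decomposes in the same way.

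The only point requiring care is Step 2: the image must again satisfy the compatibility condition \(\varphi(a)=\psi(b)\). This is exactly where \(\Gamma\)-equivariance of \(\varphi\) and \(\psi\) (\cref{def:equivariant_homomorphism}) is used, including the fact that \(\varphi\) maps \(A_{g^{-1}}\) into \(C_{g^{-1}}\), so that \(\rho_g\) can be applied.
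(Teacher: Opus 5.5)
Your proposal is correct and is exactly the routine componentwise verification that the paper omits (it only remarks that the lemma is ``straight forward to check''). You handle the one nontrivial point properly: \((\theta_g(a),\tau_g(b))\) again satisfies the compatibility condition by equivariance of \(\varphi\) and \(\psi\). You also correctly read the action as defined on \((A\oplus_C B)_{g^{-1}}\), which fixes the typo \(A\oplus_B C\) in the statement.
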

This is straight forward to check. One particular example is the mapping cone.
\begin{example}\label{ex:mapping-cone}
	Suppose \(f\colon A\to B\) is a \(^*\)-homomorphism. The the mapping cone \begin{equation*}C_f = \{(a,b)\in A\oplus C_0((0,1], B)\colon f(a)=b(1)\}\end{equation*}
	is the pullback with respect to the maps \(f\colon A\to B\) and \(\ev_1\colon C_0((0,1],B)\to B\). Similarly, the mapping cylinder
	\begin{equation*}Z_f = \{(a,b)\in A\oplus C([0,1], B)\colon f(a)=b(1)\}\end{equation*}
	is a pullback.
	
	When \(A\) and \(B\) are equipped with partial \(\Gamma\)-actions and \(f\colon A\to B\) is \(\Gamma\)-equivariant, define a partial \(\Gamma\)-action on \(C_0((0,1], B)\) by \(C_0((0,1], B)_g=C_0((0,1], B_g)\) and \((g\cdot b)(t)=g\cdot b(t)\). Then \(\ev_1\) is \(\Gamma\)-equivariant and \(C_f\) has a partial \(\Gamma\)-action. Similarly, \(Z_f\) admits a partial \(\Gamma\)-action.
\end{example}
\section{Groupoid \(C^*\)-algebra and crossed product \(C^*\)-algebra}\label{sect:GroupoidAlgebra}
For a partial group action of \(\Gamma\) on \(X\), one can define the groupoid \(C^*\)-algebra of \(\Gamma\ltimes X\). Alternatively, in \cite{Mcc95} there is a construction of a crossed product \(C^*\)-algebra for a partial \(\Gamma\)-action on a \(C^*\)-algebra. In this section, we recall both constructions and that they give isomorphic \(C^*\)-algebras for partial actions on spaces. Moreover, we list some functorial properties needed later on.
\subsection{Groupoid \(C^*\)-algebra}
Suppose there is a partial \(\Gamma\)-action on a locally compact Hausdorff space \(X\).
For \(x\in X\) the source fiber of the groupoid \(\Gamma\ltimes X\) is \(s^{-1}\{x\}=\{(g,x)\colon x\in X_{g^{-1}}\}\). We equip each source fiber with the counting measure. This gives rise to a right Haar system on \(\Gamma\ltimes X\), so that its groupoid \(C^*\)-algebra can be defined, see \cite{Ren80}. 
\begin{definition}
	Equip \(C_c(\Gamma\ltimes X)\) with the following involution and convolution
	\begin{align*}
		f^*(g,x)&=\overline{f(g^{-1},g\cdot x)},\\
		f_1*f_2(g,x)&=\sum_{\substack{h\in\Gamma\\ x\in X_{h^{-1}}}}f_1(gh^{-1},h\cdot x)f_2(h,x).
	\end{align*}
\end{definition}
Then \(C^*(\Gamma\ltimes X)\) is defined as the completion of \(C_c(\Gamma\ltimes X)\) with respect to all \(^*\)-representations bounded by the \(I\)-norm. The \(I\)-norm is defined as \(\norm{f}_I=\max\{\norm{f}_s,\norm{f^*}_s\}\) where
\begin{equation*}
	\norm{f}_s=\sup_{x\in X} \sum_{\substack{g\in\Gamma\\ x\in X_{g^{-1}}}}\abs{f(g,x)}.
\end{equation*}
\subsection{Crossed products for partial group action}\label{sec:partial-crossed-products}
There is also a notion of crossed product for partial group actions on \(C^*\)-algebras \cite{Mcc95}, we recall now.  Suppose there is a partial \(\Gamma\)-action on \(A\) by \(*\)-automorphisms \(\theta_g\), \(g\in \Gamma\).
Let \(L(A,\Gamma)\) be the closed subspace of \(\ell^1(\Gamma,A)\) defined by
\[L(A,\Gamma)=\{f\in\ell^1(\Gamma,A)\colon f(g)\in A_g \text{ for all }g\in \Gamma\}.\]
It becomes a Banach \(^*\)-algebra when equipped with the following involution and convolution 
\begin{align*}
	f^*(g)&=\theta_g(f(g^{-1}))^*,\\
	f_1*f_2(g)&=\sum_{h\in\Gamma}\theta_h[\theta_{h^{-1}}(f_1(h))f_2(h^{-1}g)].
\end{align*}
\begin{definition}[\cite{Mcc95}]
	A \emph{covariant representation} of \((A,\Gamma,\theta)\) is a triple \((\pi,u,\Hils)\) consisting of a Hilbert space \(\Hils\), a non-degenerate \(^*\)-representation \(\pi\colon A\to \Bounded(\Hils)\) and a map \(u\colon \Gamma\to\Bounded(\Hils)\) such that for all \(g,h\in\Gamma\)
	\begin{enumerate}
		\item \(u_g\) is a partial isometry with initial space \(\overline{ \pi(A_{g^{-1}})\Hils}\) and final space \(\overline{\pi(A_g)\Hils}\),
		\item \label{item:covariant}\(u_g\pi(a)u_{g^{-1}}=\pi(\theta_g(a))\) for all \(a\in A_{g^{-1}}\),
		\item \(\pi(a)(u_gu_h-u_{gh})=0\) for all \(a\in A_g\cap A_{gh}\),
		\item \(u_g^*=u_{g^{-1}}\).
	\end{enumerate}
\end{definition}
\begin{lemma}[\cite{Mcc95}*{p.~84}]\label{res:unitary-rep-gives-cov-rep}
	Suppose \(U\colon \Gamma\to\mathcal U(\Hils)\) is a unitary representation and that \(\pi\colon A\to \Bounded(\Hils)\) a \(^*\)-representation satisfying \(U_g\pi(a)U_{g^{-1}}=\pi(\theta_g(a))\) for all \(a\in A_{g^{-1}}\). Let \(P_g\) denote the orthogonal projection to \(\overline{ \pi(A_g)\Hils}\) for \(g\in\Gamma\) and set \(u_g=P_gU_g=U_gP_{g^{-1}}\). Then \((\pi,u,\Hils)\) is a covariant representation of \((A,\Gamma,\theta)\).
 \end{lemma}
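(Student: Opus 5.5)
We have to verify the four axioms of a covariant representation for $u_g=P_gU_g$. The first step is the identity $P_gU_g=U_gP_{g^{-1}}$, which is stated as part of the definition and must be checked. From the covariance relation $U_g\pi(a)U_g^*=\pi(\theta_g(a))$ for $a\in A_{g^{-1}}$, together with $\theta_g(A_{g^{-1}})=A_g$, we get
$$U_g\,\overline{\pi(A_{g^{-1}})\Hils}=\overline{U_g\pi(A_{g^{-1}})U_g^*\Hils}=\overline{\pi(A_g)\Hils}.$$
So the unitary $U_g$ maps the range of $P_{g^{-1}}$ onto the range of $P_g$. Hence $U_gP_{g^{-1}}U_g^*=P_g$, which is the desired identity. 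The same computation shows that $u_g$ is a partial isometry with initial space $\overline{\pi(A_{g^{-1}})\Hils}$ and final space $\overline{\pi(A_g)\Hils}$. This is axiom (i).

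Axiom (iv) follows at once: $u_g^*=U_g^*P_g=U_{g^{-1}}P_g=u_{g^{-1}}$, where the last equality is the identity $u_h=U_hP_{h^{-1}}$ applied with $h=g^{-1}$.

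For axiom (ii), let $a\in A_{g^{-1}}$. The range of $\pi(a)$ lies in $\overline{\pi(A_{g^{-1}})\Hils}$, and since $A_{g^{-1}}$ is an ideal and $a^*\in A_{g^{-1}}$, also $\pi(a)(1-P_{g^{-1}})=0$. Thus $P_{g^{-1}}\pi(a)=\pi(a)=\pi(a)P_{g^{-1}}$. To justify these, one can use an approximate unit $(e_\lambda)$ of $A_{g^{-1}}$: $\pi(e_\lambda)\to P_{g^{-1}}$ strongly, and $\pi(a)\pi(e_\lambda)\to\pi(a)$ in norm. It then follows that
$$u_g\pi(a)u_{g^{-1}}=U_gP_{g^{-1}}\pi(a)P_{g^{-1}}U_g^*=U_g\pi(a)U_g^*=\pi(\theta_g(a)).$$

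Axiom (iii) is the main step. Let $a\in A_g\cap A_{gh}$. We compute
$$\pi(a)u_gu_h=\pi(a)P_gU_gP_hU_h=\pi(a)U_gP_hU_h,$$
because $\pi(a)P_g=\pi(a)$. Conjugation gives $U_gP_hU_g^*$ equal to the projection onto $\overline{\pi(\theta_g(A_{g^{-1}}\cap A_h))\Hils}$ on the part relevant here. The key ideal-theoretic fact is
$$\theta_g(A_{g^{-1}}\cap A_h)=A_g\cap A_{gh},$$
and this is where the stated equality of sets in axiom (ii) of a partial action is used. Ideals of $C^*$-algebras satisfy $IJ=I\cap J$, and for ideals $I,J$ one has $\overline{\pi(I)\pi(J)\Hils}=\overline{\pi(I\cap J)\Hils}$; hence $P_IP_J=P_{I\cap J}$ for the corresponding projections, and these projections commute. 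Using these facts gives
$$\pi(a)U_gP_hU_g^*=\pi(a)P_g\,U_gP_hU_g^*=\pi(a)P_{A_g\cap A_{gh}}=\pi(a),$$
since $a\in A_g\cap A_{gh}$. Therefore $\pi(a)u_gu_h=\pi(a)U_gU_h=\pi(a)U_{gh}$.

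Finally, $a\in A_{gh}$ gives $\pi(a)=\pi(a)P_{gh}$, so $\pi(a)U_{gh}=\pi(a)P_{gh}U_{gh}=\pi(a)u_{gh}$. This proves (iii).

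Non-degeneracy of $\pi$ is assumed implicitly in the statement, as required by the definition of a covariant representation. The only delicate point is the projection calculus for ideals, namely $P_{I}P_J=P_{I\cap J}$ and $U_gP_IU_g^*=P_{\theta_g(I)}$ for ideals $I\subseteq A_{g^{-1}}$. I expect that to be the main obstacle, and I would prove it first using approximate units.
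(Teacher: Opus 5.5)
Your argument is correct. The paper gives no proof of this lemma (it cites McClanahan), and your direct verification of the four axioms is the standard argument. A few points deserve tightening.

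In (iii), drop the vague phrase ``on the part relevant here'' and state the identity your chain actually uses:
$P_gU_gP_hU_g^*=U_gP_{g^{-1}}P_hU_g^*=U_gP_{A_{g^{-1}}\cap A_h}U_g^*=P_{A_g\cap A_{gh}}$.
The last step uses $\theta_g(A_{g^{-1}}\cap A_h)=A_g\cap A_{gh}$. For algebras this follows exactly as for spaces: apply axiom (ii) to the pair $g^{-1},gh$, then use $\theta_g\circ\theta_{g^{-1}}=\id$ on $A_g$, which comes from axiom (iii).

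The commutation of $P_I$ and $P_J$ needs its one-line justification. The subspace $\overline{\pi(I)\Hils}$ is invariant under the self-adjoint set $\pi(A)$, so $P_I\in\pi(A)'$. Meanwhile $P_J$ is the strong limit of $\pi(f_\mu)$ for an approximate unit $(f_\mu)$ of $J$, so the two projections commute. Hence the range of $P_IP_J$ lies in $\overline{\pi(JI)\Hils}=\overline{\pi(I\cap J)\Hils}$, and it clearly contains this subspace.

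Finally, you are right that nondegeneracy of $\pi$ does not follow from the hypotheses. It has to be added to the statement, and your verification of (i)--(iv) does not depend on it.
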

\begin{example}\label{ex:covariant-rep}
	Suppose \(W\) is equipped with a finite \(\Gamma\)-invariant measure \(\mu\) and \(X\) is measurable with \(\mu(X)>0\). Then there is an associated covariant representation of \((C_0(W),\Gamma,\alpha)\) defined by \((\pi^W,u^W,L^2(W,\mu))\) where \(\pi^W\) is the representation by multiplication operators and \(u^W\colon \Gamma\to \mathcal U(L^2(W,\mu))\) is defined by \(u^W(g)\psi(x)=\psi(g^{-1}\cdot x)\).
	
	Equip \(X\) now with the restriction of \(\mu\) and let \(\pi^X\) denote the representation of \(C_0(X)\) by multiplication on \(L^2(X,\mu)\). Define \(u^X\colon \Gamma\to \mathcal \Bounded(L^2(X,\mu))\) by 
	\begin{align*}
		u^X_g\psi(x)=\begin{cases}
			\psi(g^{-1}\cdot x) &\text{if }g^{-1}\cdot x\in X,\\
			0 &\text{else.}
		\end{cases}
	\end{align*}
	Then using \cref{res:unitary-rep-gives-cov-rep}, one can show that \((\pi^X,u^X,L^2(X,\mu))\) is a covariant representation of \((C_0(X),\Gamma,\theta)\). Let \(e\colon L^2(X,\mu)\to L^2(W,\mu)\) and \(r\colon L^2(W,\mu)\to L^2(X,\mu)\) denote extension by zero and restriction. Then the following diagrams commute for all \(f\in C_0(X)\) and \(g\in \Gamma\)
	\begin{eqnarray*}
		\begin{tikzcd}
			L^2(W,\mu)\arrow[r,"\pi^W(f)"] & L^2(W,\mu) \arrow[d,"r"]\\
			L^2(X,\mu)\arrow[u,"e"]\arrow[r,"\pi^X(f)"] & L^2(X,\mu)
		\end{tikzcd}\qquad 
		\begin{tikzcd}
			L^2(W,\mu)\arrow[r,"u^W_g"] & L^2(W,\mu) \arrow[d,"r"]\\
			L^2(X,\mu)\arrow[u,"e"]\arrow[r,"u^X_g"] & L^2(X,\mu)
			\end{tikzcd}.
	\end{eqnarray*} 
\end{example}
\begin{definition}
	Every covariant representation \((\pi,u,\Hils)\) of \((A,\Gamma,\theta)\) gives rise to a \(^*\)-representation of \(L(A,\Gamma)\) on \(\Hils\) defined by
	\begin{equation*}
		(\pi\times u)(f)=\sum_{g\in\Gamma}\pi(f(g))u_g.
	\end{equation*}
	Then \(A\rtimes\Gamma\) is defined as the \(C^*\)-completion of \(L(A,\Gamma)\) with respect to all covariant representations.
\end{definition}
By \cite{Mcc95}*{Proposition~2.8} there is a \(1\)-\(1\) correspondence between covariant representations and nondegenerate representations of \(L(A,\Gamma)\).
\begin{remark}
	There is also a notion of reduced crossed products for partial group actions. If the group \(\Gamma\) is amenable, the reduced crossed product is isomorphic to \(A\rtimes \Gamma\), see \cite{Mcc95}*{Proposition~4.2}.
\end{remark}
\begin{remark}\label{rem:fell-bundle}
	A partial \(\Gamma\)-action on a \(C^*\)-algebra \(A\) given by \((A_g)_{g\in\Gamma}\) and \((\theta_g)_{g\in \Gamma}\) gives rise to a Fell bundle, see \cite{Exe17}*{16.5}. It is given by \((B_g)_{g\in \Gamma}\) with \(B_g=\{a\delta_g\colon a\in A_g\}\) and multiplication maps \(B_g\times B_h\to B_{gh}\) given by \((a\delta_g)\cdot(b\delta_h)=\theta_g(\theta_{g^{-1}}(a)b)\delta_{gh}\). The involution maps \(B_g\to B_{g^{-1}}\) are given by \((a\delta_g)^*=\theta_{g^{-1}}(a^*)\delta_{g^{-1}}\). Then \(A\rtimes\Gamma\) is isomorphic to the \(C^*\)-algebra of the Fell bundle, see \cite{Exe17}*{Proposition~16.28}.
\end{remark}
\begin{theorem}\cite{Aba04}*{Theorem~3.3}
	Suppose \(X\) admits a partial \(\Gamma\)-action. Then \(C^*(\Gamma\ltimes X)\cong  C_0(X)\rtimes\Gamma\) for the induced \(\Gamma\)-action on \(C_0(X)\). 
\end{theorem}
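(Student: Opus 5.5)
The plan is to build mutually inverse $^*$-homomorphisms between the dense $^*$-algebras $C_c(\Gamma\ltimes X)$ and $L(C_0(X),\Gamma)$, check that the relevant $C^*$-norms agree, and then extend to the completions. The induced partial action on $C_0(X)$ uses the ideals $C_0(X)_g=C_0(X_g)$, i.e.\ functions vanishing outside the open set $X_g$. The isomorphisms are $\theta_g(f)=f\circ\theta_{g^{-1}}$ for $f\in C_0(X_{g^{-1}})$.

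The first step is to set up the map. Given $f\in C_c(\Gamma\ltimes X)$, \cref{rem:etale} shows that each slice $\{g\}\times X_{g^{-1}}$ is open and compact support meets only finitely many slices. We define
\[
\Phi(f)(g)=\bigl(x\mapsto f(g,\theta_{g^{-1}}(x))\bigr)\in C_c(X_g)\subseteq C_0(X_g),
\]
extended by zero outside $X_g$. This gives a finitely supported element of $L(C_0(X),\Gamma)$. Its image is exactly the span of the elements $a\delta_g$ with $a\in C_c(X_g)$, which is dense in $L(C_0(X),\Gamma)$ in $\ell^1$-norm and hence in every $C^*$-seminorm. A direct computation then shows that $\Phi$ intertwines the involutions and convolutions. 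For the convolution, the formula $\theta_h[\theta_{h^{-1}}(f_1(h))f_2(h^{-1}g)]$ evaluated at $x$ reduces to
\[
f_1\bigl(h,\theta_{h^{-1}}(x)\bigr)\,f_2\bigl(h^{-1}g,\theta_{g^{-1}}(x)\bigr)
\]
on the correct domain $X_h\cap X_g$. Reindexing via $h\mapsto gh^{-1}$ gives the groupoid convolution formula. Here the partial action axioms ensure that the domains match.

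The second step compares norms. Covariant representations $(\pi,u,\Hils)$ give representations $\pi\times u$ of $L(C_0(X),\Gamma)$. Composing with $\Phi$ gives representations of $C_c(\Gamma\ltimes X)$, and these are bounded by the $I$-norm because $\norm{(\pi\times u)(\Phi f)}\le\sum_g\norm{f(g,\cdot)}_\infty$. Conversely, we must show that every $I$-norm bounded representation $\rho$ of $C_c(\Gamma\ltimes X)$ comes from a covariant representation. For this we restrict $\rho$ to $C_c(X)$, viewed as functions supported on the unit slice $\{e\}\times X$, which gives $\pi$. We define $u_g$ as the strong limit of $\rho(\chi_i\delta_g)$ over an approximate unit $\chi_i$ of $C_c(X_g)$. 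Alternatively, we can invoke \cite{Mcc95}*{Proposition~2.8}, which says that nondegenerate representations of $L(C_0(X),\Gamma)$ correspond to covariant ones. In that case it suffices to show that $\rho\circ\Phi^{-1}$ extends continuously to $L(C_0(X),\Gamma)$ in $\ell^1$-norm. That follows from the estimate $\norm{\rho(a\delta_g)}^2=\norm{\rho((a\delta_g)^*(a\delta_g))}=\norm{\rho(\theta_{g^{-1}}(a^*a))}\le\norm{a}^2$, since $\rho$ restricted to the commutative algebra $C_c(X)$ is contractive for the sup norm. Contractivity there holds because $\rho$ is $I$-norm bounded and the $C^*$-envelope of $C_c(X)$ under the $I$-norm is $C_0(X)$.

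The main obstacle is this second step. Specifically, it is the bookkeeping of nondegeneracy: covariant representations require nondegenerate $\pi$, while groupoid representations are taken nondegenerate on $C_c(\Gamma\ltimes X)$. One also has to verify that the resulting $u_g$ are partial isometries with the right initial and final spaces. Once the supremum norms over the two classes of representations coincide on $C_c(\Gamma\ltimes X)$, $\Phi$ extends to an isometric $^*$-homomorphism with dense range. This yields $C^*(\Gamma\ltimes X)\cong C_0(X)\rtimes\Gamma$.
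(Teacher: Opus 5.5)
The paper gives no argument of its own here; it simply cites \cite{Aba04}*{Theorem~3.3}. Your first step is fine: $\Phi$ is a $^*$-isomorphism onto an $\ell^1$-dense $^*$-subalgebra of $L(C_0(X),\Gamma)$. The direction ``an $I$-norm bounded $\rho$ comes from a covariant representation'' is also fine. There you can even skip the bisection estimate, since $\sup_x\sum_g\abs{f(g,x)}\le\sum_g\sup_x\abs{f(g,x)}$ (and likewise for $f^*$) gives $\norm{f}_I\le\norm{\Phi(f)}_{\ell^1}$ directly.

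The genuine gap is in the other direction. You assert that $(\pi\times u)\circ\Phi$ is $I$-norm bounded because $\norm{(\pi\times u)(\Phi f)}\le\sum_g\norm{f(g,\cdot)}_\infty$. But the right-hand side is $\norm{\Phi(f)}_{\ell^1}$, which \emph{dominates} $\norm{f}_I$, so this inequality says nothing about $I$-norm boundedness. The reverse comparison fails badly. Take the trivial action of $\Z$ on $\R$ and $f(k,x)=\varphi(x-2k)$ for $0\le k<n$ (zero otherwise), with $0\le\varphi\le1$, $\max\varphi=1$ and $\supp\varphi\subset(0,1)$. Then $\norm{f}_I=1$ but $\norm{\Phi(f)}_{\ell^1}=n$.

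So what you have actually proved is only $\norm{f}_{C^*(\Gamma\ltimes X)}\le\norm{\Phi(f)}_{C_0(X)\rtimes\Gamma}$, i.e.\ a surjection $C_0(X)\rtimes\Gamma\to C^*(\Gamma\ltimes X)$. Its injectivity is exactly the missing inequality. This, and not the nondegeneracy bookkeeping (which is routine), is the analytic heart of the theorem. For general groupoids it is supplied by Renault's disintegration theorem.

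For partial actions you can close the gap with a Schur-type estimate. Put $a_g=\Phi(f)(g)\in C_c(X_g)$ and $c_g=\abs{a_g}^{1/2}$, and let $b_g\in C_c(X_g)$ satisfy $a_g=b_gc_g$ and $\abs{b_g}=c_g$. Since $u_g^*u_g$ is the projection onto $\overline{\pi(C_0(X_{g^{-1}}))\Hils}$, the covariance relation gives $\pi(c_g)u_g=u_g\pi(\theta_{g^{-1}}(c_g))$. By Cauchy--Schwarz and $\norm{u_g}\le1$,
\[
\abs{\langle(\pi\times u)(\Phi f)\xi,\eta\rangle}\le\sum_g\norm{\pi(\theta_{g^{-1}}(c_g))\xi}\,\norm{\pi(\overline{b_g})\eta}\le\Bigl\lVert\sum_g\theta_{g^{-1}}(\abs{a_g})\Bigr\rVert_\infty^{1/2}\Bigl\lVert\sum_g\abs{a_g}\Bigr\rVert_\infty^{1/2}\norm{\xi}\,\norm{\eta}.
\]
Now $\theta_{g^{-1}}(\abs{a_g})(x)=\abs{f(g,x)}$ and $\abs{a_g(x)}=\abs{f^*(g^{-1},x)}$. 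Hence the right-hand side is at most $(\norm{f}_s\norm{f^*}_s)^{1/2}\norm{\xi}\,\norm{\eta}\le\norm{f}_I\norm{\xi}\,\norm{\eta}$.

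With this estimate in place of your $\ell^1$-bound, the two universal norms agree on $C_c(\Gamma\ltimes X)$, and the rest of your argument goes through.
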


\subsection{Topologically free actions and faithful representations}
In the following, it will be important to know when a covariant representation \((\pi,u,\Hils)\) gives rise to a faithful representation of the corresponding crossed product. 
\begin{definition}[\cite{Leb05}]
	A partial group action of \(\Gamma\) on \(X\) is called \emph{topologically free} if for every finite set \(\{g_1,\ldots,g_k\}\subset\Gamma\setminus\{e\}\) the union of the sets fixed by \(g_i\)
	\begin{equation*}
		\bigcup_{j=1}^n\{x\in X_{g_j^{-1}}\colon \theta_{g_j}(x)=x\}
	\end{equation*}
	has empty interior. 
\end{definition}
The partial action in our two examples above are topologically free. The shifts in \cref{ex:shifts}, \cref{ex:shifts-var-2} have no fixed points and the scalings in \cref{ex:disk_skalings} have only \(\{0\}\) as fixed point sets.

When \(\theta\) is a partial \(\Gamma\)-action on a \(C^*\)-algebra \(A\), there are induced partial actions on its spectrum \(\widehat A\) and primitive spectrum \(\Prim A\). Namely, set for \(g\in \Gamma\)
\begin{equation*}
	(\widehat A)_g= \{[\pi]\in\widehat A\colon \pi(A_g)\neq 0\}\cong \widehat{A_g}
\end{equation*}
and \(g\cdot\pi(x)=\pi(\theta_{g^{-1}}(x))\) for \(\pi\in \widehat{A_{g^{-1}}}\) and \(x\in A_g\). The action on \(\Prim A\) is determined by \(\Prim(A)_g=\Prim(A_g)\) and \(g\cdot(\ker\pi)=\ker(g\cdot\pi)\). 
As in the case of a group action one has the following isomorphism theorem.
\begin{theorem}[\cite{Leb05}*{Corollary~3.8}]\label{res:isomorphism-theorem}
	Let \(\Gamma\) be an amenable, discrete group. Suppose that there is a partial action of \(\Gamma\) on \(A\) such that the induced partial action on \(\Prim A\) is topologically free. Let \((\pi, u,\Hils)\) be a covariant representation of \((A,\Gamma,\theta)\). Then the representation \(\pi\times u\) of \(A\rtimes\Gamma\) is faithful if and only if \(\pi\) is a faithful representation of \(A\).
\end{theorem}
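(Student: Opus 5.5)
The plan is to follow the Archbold–Spielberg/Kishimoto strategy for global actions and adapt it to partial actions, with amenability of \(\Gamma\) used to pass to the reduced crossed product.

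\emph{Necessity.} If \(\pi\times u\) is faithful on \(A\rtimes\Gamma\), then \(\pi\) is faithful, because \(A\) embeds isometrically into \(A\rtimes\Gamma\) as the elements supported at \(e\), and \((\pi\times u)(a\delta_e)=\pi(a)u_e=\pi(a)\). The isometric embedding holds since \(A\) embeds into the reduced crossed product, which equals the full one by amenability (\cite{Mcc95}*{Proposition~4.2}).

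\emph{Sufficiency, reduction to an inequality.} Assume \(\pi\) is faithful. By amenability, \(A\rtimes\Gamma\) is the reduced crossed product. The latter carries a faithful conditional expectation
\[
E\colon A\rtimes\Gamma\to A,\qquad E\Bigl(\sum_g a_g\delta_g\Bigr)=a_e .
\]
The key claim is the norm inequality
\[
\norm{\pi(E(x))}\le\norm{(\pi\times u)(x)} \quad\text{for all } x\in A\rtimes\Gamma .
\]
Granting it, suppose \((\pi\times u)(x)=0\). Then \((\pi\times u)(x^*x)=0\), so \(\pi(E(x^*x))=0\). Faithfulness of \(\pi\) gives \(E(x^*x)=0\), and faithfulness of \(E\) gives \(x=0\). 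By continuity, it suffices to prove the inequality for finite sums \(x=\sum_{g\in F}a_g\delta_g\), where \(F\) is finite, \(e\in F\), and \(a_g\in A_g\).

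\emph{Proof of the inequality.} Fix \(\varepsilon>0\). The set
\[
U=\{P\in\Prim A\colon \norm{a_e+P}>\norm{a_e}-\varepsilon\}
\]
is open and nonempty. Using topological freeness for the finite set \(F\setminus\{e\}\), I would find a nonempty open \(V\subseteq U\) such that, for every \(g\in F\setminus\{e\}\),
\[
\theta_g\bigl(V\cap\Prim(A_{g^{-1}})\bigr)\cap V=\emptyset .
\]
To get \(V\), first choose \(P_0\in U\) outside the closed-nowhere-dense fixed-point sets, which is possible since their union has empty interior. Then shrink around \(P_0\) one \(g\) at a time.

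Let \(J\) be the ideal with \(\Prim J=V\). Choose a positive contraction \(h\in J\) with \(\norm{h a_e h}>\norm{a_e}-2\varepsilon\), which exists since the norm of \(a_e\) restricted to \(V\) is almost attained. The disjointness of \(V\) from its translates forces
\[
h\,a_g\,\theta_g(\theta_{g^{-1}}(h)\cdot)=0 ,
\]
that is, \(h\,(a_g\delta_g)\,h=0\) in the Fell bundle of \cref{rem:fell-bundle}. This holds because the product lies in the ideal \(J\cap\theta_g(J\cap A_{g^{-1}})=0\).

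Hence
\[
\norm{a_e}-2\varepsilon<\norm{\pi(h a_e h)}=\norm{(\pi\times u)(hxh)}\le\norm{(\pi\times u)(x)} ,
\]
where \(\norm{a_e}=\norm{\pi(a_e)}\) because \(\pi\) is faithful. Letting \(\varepsilon\to 0\) gives the inequality.

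\emph{Main obstacle.} I expect the hardest step to be producing \(V\) and verifying the vanishing \(h(a_g\delta_g)h=0\) in the partial setting. Two difficulties combine there. First, \(\Prim A\) is non-Hausdorff, so "not fixed" does not immediately give disjoint neighbourhoods. Second, \(\theta_g\) is defined only on the open set \(\Prim(A_{g^{-1}})\), so the translate of \(V\) only makes sense on \(V\cap\Prim(A_{g^{-1}})\). I would handle this by working with ideals rather than points: it suffices to find a nonzero ideal \(J\) with \(J\cap\theta_g(J\cap A_{g^{-1}})=0\) for all \(g\in F\setminus\{e\}\). Such a \(J\) would be obtained by intersecting ideals inductively, using that a nonzero ideal on which \(\theta_g\) acts as the identity on \(\Prim\) would contradict topological freeness.
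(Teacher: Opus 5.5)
Your overall architecture is the standard one and is fine: necessity, the reduction via the faithful conditional expectation $E$ to the estimate $\norm{\pi(E(x))}\le\norm{(\pi\times u)(x)}$ for finitely supported $x$, and compression by a positive contraction $h$. (The paper gives no proof of its own; it cites Lebedev, who adapts Archbold--Spielberg.)

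The gap is exactly the step you flagged, and it cannot be closed the way you propose. Topological freeness does \emph{not} produce a nonempty open $V\subseteq U$ with $\theta_g(V\cap\Prim(A_{g^{-1}}))\cap V=\emptyset$. Equivalently, it does not produce a nonzero ideal $J$ with $J\cap\theta_g(J\cap A_{g^{-1}})=0$.

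To see this, suppose $A$ is prime. Then the zero ideal is a point of $\Prim A$ that lies in every nonempty open set. It is also fixed by every $\theta_g$ with $A_{g^{-1}}\neq0$. Hence $0\in V\cap\theta_g(V)$ for every such $V$. In ideal terms, two nonzero ideals of a prime algebra always have nonzero product, so your inductive intersection can only end at $J=0$. This is compatible with topological freeness, because $\{0\}$ need not be open.

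Concretely, take $A=C(X)\rtimes\Z$ for the full shift on $X=\{0,1\}^{\Z}$. This algebra is primitive, since there are aperiodic points with dense orbit. Let $\Z$ act by $n\mapsto\widehat{\alpha}_{z^n}$, where $\widehat{\alpha}$ is the dual action and $z=e^{2\pi i\vartheta}$ with $\vartheta$ irrational. Every primitive ideal coming from a periodic orbit is moved by each $\widehat{\alpha}_{z^n}$ with $n\neq0$. These ideals are dense in $\Prim A$ because periodic points are dense in $X$. So the action on $\Prim A$ is topologically free and the theorem applies, yet no $V$ of your kind exists.

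A secondary point: even in the Hausdorff case, ``shrinking around $P_0$'' fails if $P_0$ lies on the boundary of some $\Prim(A_{g^{-1}})$. You would also have to choose $P_0$ off these nowhere dense boundaries.

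The missing idea is to give up exact vanishing and instead use the non-fixedness of a single irreducible representation, which needs no separation by open sets.

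\begin{itemize}
\item The canonical map $\widehat A\to\Prim A$ is continuous, open, and sends fixed points to fixed points. So topological freeness on $\Prim A$ passes to $\widehat A$.
\item Replacing $x$ by $x^*x$, you may assume $a_e\ge0$.
\item Choose $[\rho]\in\widehat A$ with $\norm{\rho(a_e)}>\norm{a_e}-\varepsilon$ such that, for every $g\in F\setminus\{e\}$, either $\rho(A_{g^{-1}})=0$ or $g\cdot\rho\not\simeq\rho$. Choose a vector state $\varphi$ of $\rho$ with $\varphi(a_e)>\norm{a_e}-\varepsilon$.
\item Take a decreasing excising net $(h_\lambda)$ for $\varphi$ (Akemann--Anderson--Pedersen), so that $\norm{h_\lambda c h_\lambda-\varphi(c)h_\lambda^2}\to0$ for all $c\in A$.
\item Since $\rho$ and $g\cdot\rho$ are inequivalent, hence disjoint, such a net tends to $0$ strongly in $g\cdot\rho$. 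If instead $\rho(A_{g^{-1}})=0$, excision alone makes $\theta_{g^{-1}}(a_g)h_\lambda$ small.
\item Combining this with excision and the monotonicity of the net gives, for $\lambda$ large, a positive $h=h_\lambda$ of norm one with $\norm{ha_eh}>\norm{a_e}-2\varepsilon$ and $\norm{h(a_g\delta_g)h}<\varepsilon$ for all $g\in F\setminus\{e\}$.
\end{itemize}

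Then $\norm{a_e}-2\varepsilon<\norm{\pi(ha_eh)}\le\norm{(\pi\times u)(x)}+\abs{F}\varepsilon$, and the rest of your argument goes through unchanged. This pointwise use of inequivalence, rather than disjoint open sets, is what makes the non-Hausdorff spectrum harmless in Archbold--Spielberg's proof and in Lebedev's partial version.
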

\subsection{Functoriality}\label{subsec:functoriality}
The crossed product for partial actions is functorial in the following sense.
\begin{lemma}[\cite{Exe17}*{Proposition~22.2}]\label{res:induced-map-crossed-product}
	Let \(A,B\) be \(C^*\)-algebras with partial actions \(\theta\colon\Gamma\curvearrowright A\) and \(\tau\colon\Gamma\curvearrowright B\). Suppose \(\varphi\colon A\to B\) is a \(\Gamma\)-equivariant \(^*\)-homomorphism as defined in \cref{def:equivariant_homomorphism}. Then there is an induced \(^*\)-homomorphism
	\begin{equation*}
		\widetilde{\varphi}\colon A\rtimes\Gamma\to B\rtimes \Gamma 
	\end{equation*}
	which is given by \((\widetilde{\varphi} (f))(g)=\varphi(f(g))\) for \(f\in L(A,\Gamma)\). The construction is functorial, i.e. \(\widetilde\id =\id\) and if \(\varphi\colon A\to B\) and \(\psi\colon B\to C\) are \(\Gamma\)-equivariant, one has \(\widetilde{\psi\circ\varphi}=\widetilde\psi\circ\widetilde\varphi\).
\end{lemma}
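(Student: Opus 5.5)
The plan is to define \(\widetilde\varphi\) first on the dense Banach \(^*\)-algebra \(L(A,\Gamma)\), check that it is a contractive \(^*\)-homomorphism there, and then extend it by continuity using the universal property of \(A\rtimes\Gamma\).

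Given \(f\in L(A,\Gamma)\), set \((\widetilde\varphi f)(g)=\varphi(f(g))\). Equivariance gives \(\varphi(A_g)\subseteq B_g\), so \(\widetilde\varphi f\) takes values in the correct fibres. Since \(\varphi\) is contractive, \(\norm{\widetilde\varphi f}_{\ell^1}\le\norm{f}_{\ell^1}\), and hence \(\widetilde\varphi f\in L(B,\Gamma)\).

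Compatibility with the involution follows from equivariance:
\[
\varphi\bigl(\theta_g(f(g^{-1}))^*\bigr)=\tau_g\bigl(\varphi(f(g^{-1}))\bigr)^*,
\]
using that \(f(g^{-1})\in A_{g^{-1}}\).

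For the convolution, look at a single term \(\theta_h[\theta_{h^{-1}}(f_1(h))f_2(h^{-1}g)]\). Here \(f_1(h)\in A_h\), so \(\theta_{h^{-1}}(f_1(h))\in A_{h^{-1}}\). As \(A_{h^{-1}}\) is an ideal, the product with \(f_2(h^{-1}g)\) again lies in \(A_{h^{-1}}\). Two applications of equivariance then move \(\varphi\) through both \(\theta_{h^{-1}}\) and \(\theta_h\). Summing over \(h\) is allowed because \(\varphi\) is continuous and the sums converge absolutely. This gives \(\widetilde\varphi(f_1*f_2)=\widetilde\varphi f_1*\widetilde\varphi f_2\).

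The main point is the \(C^*\)-norm estimate \(\norm{\widetilde\varphi f}_{B\rtimes\Gamma}\le\norm{f}_{A\rtimes\Gamma}\).

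1. The norm on \(B\rtimes\Gamma\) is the supremum over covariant representations, and by \cite{Mcc95}*{Proposition~2.8} these correspond to nondegenerate representations of \(L(B,\Gamma)\).
2. Take a nondegenerate representation \(\rho\) of \(B\rtimes\Gamma\), and let \(\rho\circ\widetilde\varphi\) be the resulting \(^*\)-representation of \(L(A,\Gamma)\).
3. This representation may be degenerate. Restrict it to its essential subspace \(\overline{\rho(\widetilde\varphi(L(A,\Gamma)))\Hils}\), where it is nondegenerate and carries the same norms. On the complement it is zero.
4. Nondegenerate representations of \(L(A,\Gamma)\) come from covariant representations, by the same correspondence. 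Hence \(\norm{\rho(\widetilde\varphi f)}\le\norm{f}_{A\rtimes\Gamma}\).
5. Taking the supremum over \(\rho\) gives the estimate, so \(\widetilde\varphi\) extends by continuity to a \(^*\)-homomorphism \(A\rtimes\Gamma\to B\rtimes\Gamma\).

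Functoriality is then immediate. On \(L(A,\Gamma)\) the formulas give \(\widetilde{\id}=\id\) and \(\widetilde{\psi\circ\varphi}=\widetilde\psi\circ\widetilde\varphi\) pointwise in \(g\), and both identities pass to the completions by density and continuity.

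The obstacle I expect is the norm estimate in step 3. If the passage to the essential subspace turns out to be awkward, I would instead use the Fell bundle picture of \cref{rem:fell-bundle}. There \(\varphi\) induces a morphism of Fell bundles \(a\delta_g\mapsto\varphi(a)\delta_g\), and the universal property of full Fell-bundle \(C^*\)-algebras (\cite{Exe17}) directly gives the extension.
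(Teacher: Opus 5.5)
Your proof is correct. The involution and convolution checks are right; you correctly use that \(A_{h^{-1}}\) is an ideal so that equivariance applies to the inner product. The norm estimate via the essential subspace and \cite{Mcc95}*{Proposition~2.8} is sound.

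It follows a different route from the paper, which gives no argument and only cites \cite{Exe17}*{Proposition~22.2}. That source works in the Fell bundle picture of \cref{rem:fell-bundle}. There \(a\delta_g\mapsto\varphi(a)\delta_g\) is a morphism of Fell bundles, and general functoriality of cross-sectional algebras produces \(\widetilde\varphi\). This is essentially your fallback.

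Your primary route instead stays in McClanahan's \(\ell^1\)-picture, which is the one the paper actually uses to define \(A\rtimes\Gamma\). You show that \(\widetilde\varphi\colon L(A,\Gamma)\to L(B,\Gamma)\) is a contractive \(^*\)-homomorphism. You then observe that \(\rho\circ\widetilde\varphi\) is dominated by the universal norm, because a degenerate representation splits as a nondegenerate part plus zero. In effect, the norm of \(A\rtimes\Gamma\) is the enveloping \(C^*\)-norm of the Banach \(^*\)-algebra \(L(A,\Gamma)\).

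Your estimate also makes the extension well defined even if the universal seminorm had a kernel on \(L(A,\Gamma)\).

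What your approach buys is that it needs nothing beyond the definitions in \cref{sec:partial-crossed-products}. The Fell bundle route additionally yields the corresponding map on reduced crossed products. It is also compatible with the identification in \cref{rem:fell-bundle}, which the paper relies on elsewhere, for instance in \cref{res:partial-tensor-prod}.
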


In particular, this can be applied to continuous proper maps \(\varphi\colon X\to Y\) between locally compact Hausdorff spaces \(X,Y\) with partial \(\Gamma\)-actions \(\theta,\tau\) respectively, if \(\varphi|_{X_g}\colon X_g\to Y_g\) for all \(g\in \Gamma\) and \(\varphi(\theta_g(x))=\tau_g(\varphi(x))\) for all \(g\in \Gamma\) and \(x\in X_{g^{-1}}\). In this case, we denote the induced map on the crossed products by \(\varphi^*\colon C_0(Y)\rtimes \Gamma\to C_0(X)\rtimes \Gamma\).

Let \(A\) be a \(C^*\)-algebra with a partial \(\Gamma\)-action and \(I\subseteq A\) an ideal. Then \(I\) is called \(\Gamma\)-invariant if \(\theta_g(I\cap A_{{g}^{-1}})\subseteq I\) for all \(g\in\Gamma\). In this case, letting  \(I_g=I\cap A_g\) and restricting \(\theta_g\) to a map between \(I_{g^{-1}}\) and \(I_g\) for \(g\in \Gamma\) yields a partial \(\Gamma\)-action. Moreover, there is also a partial \(\Gamma\)-action on the quotient \(A/I\), where \((A/I)_g=A_g/I_g\) and \(\overline{\theta}_g([a])=[\theta_g(a)]\).
\begin{proposition}[\cite{ELQ02}*{Proposition~3.1}]\label{res:ses-partial-actions-algebras}
	Let \(A\) be a \(C^*\)-algebra with a partial \(\Gamma\)-action and \(I\subseteq A\) a \(\Gamma\)-invariant ideal. Denote by \(\iota\colon I\to A\) the inclusion and by \(\pi\colon A\to A/I\) the quotient map. Then the following sequence is exact
	\begin{equation*}
		\begin{tikzcd}
			0\arrow[r,""]&I\rtimes\Gamma\arrow[r,"\tilde\iota"]&A\rtimes \Gamma\arrow[r,"\tilde \pi"] &A/I\rtimes\Gamma\arrow[r,""]&0.
		\end{tikzcd}
	\end{equation*}
\end{proposition}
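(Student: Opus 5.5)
We prove exactness of the sequence of partial crossed products in \cref{res:ses-partial-actions-algebras} by working on the dense subalgebras \(L(\cdot,\Gamma)\) and then using universal properties of the full crossed products. The maps \(\tilde\iota\) and \(\tilde\pi\) exist by \cref{res:induced-map-crossed-product}: \(\iota\) is equivariant since \(I_g=I\cap A_g\) and \(\theta\) restricts, and \(\pi\) is equivariant by definition of \((A/I)_g=A_g/I_g\) and \(\overline\theta_g\). Functoriality gives \(\tilde\pi\circ\tilde\iota=\widetilde{\pi\circ\iota}=0\).

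\emph{Surjectivity of \(\tilde\pi\).} For \(f\in L(A/I,\Gamma)\) with finite support, lift each \(f(g)\in A_g/I_g\) to some \(a_g\in A_g\); this is possible because \(A_g/I_g=A_g/(I\cap A_g)\). Then \(\tilde\pi\) maps a dense subset onto a dense subset. Since \(^*\)-homomorphisms of \(C^*\)-algebras have closed range, \(\tilde\pi\) is surjective.

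\emph{Injectivity of \(\tilde\iota\) and exactness in the middle.} Let \(J\) be the closed ideal of \(A\rtimes\Gamma\) generated by \(\tilde\iota(I\rtimes\Gamma)\). It equals the closure of the span of \(\{a\delta_g: a\in I_g\}\), since \(I_g\) is an ideal of \(A_g\) and is invariant under \(\theta\), so products with \(b\delta_h\) land again in such elements. We have \(J\subseteq\ker\tilde\pi\). For the reverse inclusion, we build an inverse map \((A/I)\rtimes\Gamma\to (A\rtimes\Gamma)/J\). To do this, take a nondegenerate representation of \((A\rtimes\Gamma)/J\). By \cite{Mcc95}*{Proposition~2.8} it comes from a covariant representation \((\rho,u)\) of \(A\) with \(\rho(I)=0\). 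Restricting to the essential subspace and checking that \(u_g\) still has initial space \(\overline{\rho(A_{g^{-1}})\Hils}\), we get a covariant representation of \((A/I,\Gamma,\overline\theta)\). Its integrated form factors the quotient map, which gives \(\ker\tilde\pi\subseteq J\).

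For injectivity of \(\tilde\iota\), we show that every covariant representation of \((I,\Gamma)\) extends to one of \((A,\Gamma)\) on the same space. The representation \(\rho\) of \(I\) extends uniquely to \(A\) on \(\overline{\rho(I)\Hils}\). The partial isometries \(u_g\) have initial space \(\overline{\rho(I_{g^{-1}})\Hils}\), which is contained in \(\overline{\rho(A_{g^{-1}})\Hils}\) for the extended \(\rho\). The needed equalities follow from \(\overline{I A_g}=I_g\), and covariance for \(a\in A_{g^{-1}}\) is checked on vectors \(\rho(i)\xi\). Consequently, the universal norm of \(I\rtimes\Gamma\) is dominated by the norm coming from \(A\rtimes\Gamma\), so \(\tilde\iota\) is isometric.

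\emph{Main obstacle.} The hardest step is this extension of covariant representations from \(I\) to \(A\), in particular verifying the initial/final space conditions in the partial setting. An alternative route, which I would use if the direct check fails, is the Fell bundle picture of \cref{rem:fell-bundle}: one invokes the general exactness of full Fell bundle \(C^*\)-algebras for ideals of Fell bundles, as in \cite{Exe17}. This avoids the explicit computation with partial isometries.
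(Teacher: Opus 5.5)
Your proof is correct. The paper does not prove this statement; it quotes it from \cite{ELQ02}. So your proposal replaces a citation rather than competing with a proof in the paper. It is the standard universal-property argument from the global case, carried over to partial actions:
surjectivity of $\tilde\pi$ comes from lifting finitely supported sections;
the inclusion $\ker\tilde\pi\subseteq\tilde\iota(I\rtimes\Gamma)$ comes from pushing a representation that kills $I$ down to $(A/I,\Gamma,\overline\theta)$;
isometry of $\tilde\iota$ comes from extending covariant representations from $I$ to $A$, using $\overline{\overline\rho(A_{g})\Hils}=\overline{\rho(I_{g})\Hils}$, which follows from $\overline{IA_g}=I_g$.
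Your Fell-bundle fallback would only trade one citation for another.

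When you write it out, tighten three small points.
\begin{enumerate}
\item In the middle step, the representation of $(A\rtimes\Gamma)/J$ must be faithful, not merely nondegenerate, to conclude $\ker\tilde\pi\subseteq\ker\Pi=J$. Since McClanahan's correspondence already gives a nondegenerate $\rho$, no passage to an essential subspace is needed.
\item You must verify axiom (iii) of a covariant representation in both constructions, which you did not mention.
For the quotient it follows from $\pi(A_g)\cap\pi(A_{gh})=\pi(A_g\cap A_{gh})$, because intersections of closed ideals are products.
For the extension $\overline\rho$, write $\overline\rho(a)=\lim_\lambda\rho(e_\lambda a)$ strongly, where $(e_\lambda)$ is an approximate unit of $I$. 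Then $e_\lambda a\in I_g\cap I_{gh}$ whenever $a\in A_g\cap A_{gh}$, and axiom (iii) for $(\rho,u)$ applies.
\item In the covariance check on vectors $\rho(i)\xi$ with $i\in I$, first replace $\rho(i)\xi$ by $\lim_\mu\rho(f_\mu i)\xi$, where $(f_\mu)$ is an approximate unit of $I_g$. The discarded part lies in $\overline{\rho(I_g)\Hils}^{\perp}$, which $u_{g^{-1}}$ annihilates, so the covariance of $(\rho,u)$ on $I_g$ then applies.
\end{enumerate}
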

Restricting to partial actions on spaces, a subset \(U\subseteq X\) is called \(\Gamma\)-invariant if \(\theta_g(U\cap X_{g^{-1}})\subseteq U\) for all \(g\in\Gamma\). In this case, \(U\) has a partial \(\Gamma\)-action defined by \(U_g=U\cap X_g\) for \(g\in \Gamma\) and restricting \(\theta_g\) appropriately. One deduces from \cref{res:ses-partial-actions-algebras} the following result.
\begin{lemma}\label{res:ses-partial-actions}
	Let \(X\) be a locally compact Hausdorff space with a partial action by a discrete group \(\Gamma\) and let \(i\colon U\hookrightarrow X\) be an open \(\Gamma\)-invariant subset. Then \(X\setminus U\) is also \(\Gamma\)-invariant and there is a short exact sequence
	\begin{equation*}
		\begin{tikzcd}
			0\arrow[r,""]&C_0(U)\rtimes\Gamma\arrow[r,"i^*"]&C_0(X)\rtimes \Gamma\arrow[r,""] &C_0(X\setminus U)\rtimes\Gamma\arrow[r,""]&0.
		\end{tikzcd}
	\end{equation*}
\end{lemma}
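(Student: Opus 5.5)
We reduce \cref{res:ses-partial-actions} to \cref{res:ses-partial-actions-algebras} with \(A=C_0(X)\) and \(I=C_0(U)\), viewed as the closed ideal of functions vanishing on \(X\setminus U\). The steps are: check the invariance statements, identify the partial actions on \(I\) and on \(A/I\), and match the induced maps.

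First we show that \(X\setminus U\) is \(\Gamma\)-invariant. Let \(x\in (X\setminus U)\cap X_{g^{-1}}\) and suppose \(\theta_g(x)\in U\). Then \(\theta_g(x)\in U\cap X_g\), and invariance of \(U\) under \(g^{-1}\) gives \(x=\theta_{g^{-1}}(\theta_g(x))\in U\), a contradiction. Here we use that \(\theta_{g^{-1}}=\theta_g^{-1}\), which follows from axiom (iii) with \(h=g^{-1}\) together with \(\theta_e=\id\).

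Next we check that \(I=C_0(U)\) is a \(\Gamma\)-invariant ideal in the sense preceding \cref{res:ses-partial-actions-algebras}. The induced action on \(C_0(X)\) has \(A_g=C_0(X_g)\) and \(\theta_g(f)=f\circ\theta_{g^{-1}}\) for \(f\in C_0(X_{g^{-1}})\). Then \(I\cap A_{g^{-1}}=C_0(U\cap X_{g^{-1}})\). For \(f\) in this space, \(f\circ\theta_{g^{-1}}\) vanishes outside \(\theta_g(U\cap X_{g^{-1}})\subseteq U\). It is therefore a \(C_0\)-function on \(X_g\) supported in \(U\), which places it in \(C_0(U\cap X_g)\).

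Then we identify the restricted and quotient actions.
\begin{itemize}
\item The restricted action on \(I\) has \(I_g=C_0(U\cap X_g)=C_0(U_g)\). This is exactly the algebra obtained from the partial action of \(\Gamma\) on \(U\).
\item For the quotient, restriction gives \(C_0(X)/C_0(U)\cong C_0(X\setminus U)\), since \(X\setminus U\) is closed.
\item Under this isomorphism \(A_g/I_g=C_0(X_g)/C_0(X_g\cap U)\cong C_0(X_g\cap (X\setminus U))=C_0((X\setminus U)_g)\). The quotient automorphisms \(\overline\theta_g\) correspond to composition with \(\theta_{g^{-1}}\) restricted to \(X\setminus U\).
\end{itemize}
These identifications are \(\Gamma\)-equivariant isomorphisms. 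By the functoriality in \cref{res:induced-map-crossed-product}, they induce isomorphisms of the corresponding crossed products.

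Finally, \cref{res:ses-partial-actions-algebras} yields exactness. It remains to see that \(\tilde\iota\) equals \(i^*\), where \(i^*\) is defined via extension by zero \(C_0(U)\to C_0(X)\). Extension by zero is the ideal inclusion, so the two maps agree. Note that \(i\) is not proper, so \(i^*\) should be read as the map induced by this equivariant inclusion of algebras, not by pullback along a proper map.

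The main care point is the identification \(A_g/I_g\cong C_0((X\setminus U)_g)\). This requires the Tietze extension theorem for \(C_0\)-functions on the closed subset \((X\setminus U)\cap X_g\) of the locally compact space \(X_g\), to get surjectivity of restriction. Surjectivity holds because the one-point compactification is normal. Compatibility with \(\overline\theta_g\) is then immediate.
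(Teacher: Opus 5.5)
Your proposal is correct and follows the paper's route: the paper simply deduces the lemma from \cref{res:ses-partial-actions-algebras} with $A=C_0(X)$ and $I=C_0(U)$, and you supply the invariance, quotient-identification and equivariance checks that the paper leaves implicit. Your observation that $i^*$ here must be read as the map induced by extension by zero is also right, since $i$ is not proper and the arrow points opposite to the paper's earlier convention for $\varphi^*$.
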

%

The following adapts the arguments in the proof of \cite{Ped99}*{Thm.~6.3} to the setting of partial group actions.
\begin{proposition}\label{res:partial_cossed-fibred-product}
	Let \(A,B,C\) be \(C^*\)-algebras with partial \(\Gamma\)-actions and suppose \(\varphi\colon A\to C\) and \(\psi\colon B\to C\) are \(\Gamma\)-equivariant \(^*\)-homomorphisms. Equip \(A\oplus_C B\) with the partial \(\Gamma\)-action as in \cref{res:partial-action-on-pullback}. Then there is an isomorphism
	\begin{equation*}
		(A\oplus_C B)\rtimes\Gamma\cong (A\rtimes\Gamma)\oplus_{C\rtimes\Gamma}(B\rtimes\Gamma).
	\end{equation*}
\end{proposition}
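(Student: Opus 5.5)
The plan is to construct the canonical comparison map and then prove it is bijective, following the proof of \cite{Ped99}*{Thm.~6.3}. Let \(D=A\oplus_C B\). By \cref{res:partial-action-on-pullback}, the projections \(p_A\colon D\to A\) and \(p_B\colon D\to B\) are \(\Gamma\)-equivariant, and \(\varphi\circ p_A=\psi\circ p_B\). By the functoriality in \cref{res:induced-map-crossed-product} we get \(\tilde p_A\), \(\tilde p_B\) with \(\tilde\varphi\circ\tilde p_A=\tilde\psi\circ\tilde p_B\). Hence there is a \(^*\)-homomorphism
\[\Phi=(\tilde p_A,\tilde p_B)\colon D\rtimes\Gamma\to (A\rtimes\Gamma)\oplus_{C\rtimes\Gamma}(B\rtimes\Gamma).\]
It remains to show that \(\Phi\) is injective and surjective.

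\textbf{Injectivity.} Consider the ideals \(I=\ker p_B=\ker\psi\oplus 0\cong\ker\psi\) and \(J=\ker p_A\cong \ker\varphi\) of \(D\); both are \(\Gamma\)-invariant, and \(I\cap J=0\). By \cref{res:ses-partial-actions-algebras}, \(\ker\tilde p_B=I\rtimes\Gamma\) and \(\ker\tilde p_A=J\rtimes\Gamma\), both viewed inside \(D\rtimes\Gamma\). So \(\ker\Phi=(I\rtimes\Gamma)\cap(J\rtimes\Gamma)\). An intersection of closed ideals equals their product, so
\[\ker\Phi=(I\rtimes\Gamma)(J\rtimes\Gamma)\subseteq (IJ)\rtimes\Gamma=0.\]
The inclusion holds because for elementary elements \(a\delta_g\in I_g\delta_g\) and \(b\delta_h\in J_h\delta_h\) the Fell bundle product of \cref{rem:fell-bundle} is \(\theta_g(\theta_{g^{-1}}(a)b)\delta_{gh}\). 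Here \(\theta_{g^{-1}}(a)\in I\) by invariance, so \(\theta_{g^{-1}}(a)b\in IJ=0\). Such elements span a dense subspace, which gives the claim.

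\textbf{Surjectivity.} Take \((x,y)\) in the pullback, so \(\tilde\varphi(x)=\tilde\psi(y)\). By \cref{res:ses-partial-actions-algebras} applied to \(\ker\varphi\subseteq A\) and \(\ker\psi\subseteq B\), the maps \(\tilde\varphi\), \(\tilde\psi\) have the expected kernels, and the images \(\varphi(A)\), \(\psi(B)\) are \(\Gamma\)-invariant. I plan to reduce to the case of surjective \(\varphi\) in three steps.
\begin{enumerate}
\item Replace \(C\) by \(E=\varphi(A)\cap\psi(B)\) with the restricted partial action, and \(A\) by \(A'=\varphi^{-1}(E)\), which is a \(\Gamma\)-invariant subalgebra. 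The pullback does not change under this replacement.
\item Check that \(A'\rtimes\Gamma\to A\rtimes\Gamma\) is injective and that \(E\rtimes\Gamma\to C\rtimes\Gamma\) is injective.
\item Check that \(x\) lies in \(A'\rtimes\Gamma\).
\end{enumerate}

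Once \(\varphi\) is surjective, \(\tilde\varphi\) is surjective with kernel \(\ker\varphi\rtimes\Gamma\). Lift \(y\) through \(\tilde p_B\): \(p_B\) is surjective in this situation, so \(\tilde p_B\) is surjective by \cref{res:ses-partial-actions-algebras}. This gives \(z\in D\rtimes\Gamma\) with \(\tilde p_B(z)=y\). Then
\[\tilde\varphi(x-\tilde p_A(z))=\tilde\psi(y)-\tilde\psi(\tilde p_B(z))=0,\]
so \(x-\tilde p_A(z)\in\ker\varphi\rtimes\Gamma=\tilde p_A(J\rtimes\Gamma)\). Correcting \(z\) by an element of \(J\rtimes\Gamma\) does not change \(\tilde p_B(z)\) and gives \(\Phi(z)=(x,y)\).

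The main obstacle is the reduction to surjective \(\varphi\), namely the step that needs injectivity of crossed products for subalgebras. For the full crossed product this can fail for non-ideal subalgebras. I would handle it as in \cite{Ped99}: first use that \(\varphi(A)\rtimes\Gamma\) is the image of \(A\rtimes\Gamma\), by \cref{res:ses-partial-actions-algebras}. Then argue using the amenability assumptions the paper uses, where full equals reduced crossed products and the reduced construction is injective for subalgebras via conditional expectations. Failing that, I would restrict to the case in the paper's applications, where one of the maps, typically \(\ev_1\) in \cref{ex:mapping-cone}, is surjective.
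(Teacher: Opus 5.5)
Your comparison map and your injectivity argument are the paper's. Its condition (i) is proved the same way, via $\ker\tilde p_A\cap\ker\tilde p_B=(J\rtimes\Gamma)(I\rtimes\Gamma)$ and $IJ=0$. Your lift-and-correct argument for surjectivity is Pedersen's Prop.~3.1 unpacked, and the correction step $\ker\tilde\varphi=\ker\varphi\rtimes\Gamma=\tilde p_A(J\rtimes\Gamma)$ is the paper's condition (iii).

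\textbf{The gap.} It lies in lifting $y$, which is the paper's condition (ii): $\tilde\psi^{-1}(\tilde\varphi(A\rtimes\Gamma))=\tilde p_B(D\rtimes\Gamma)$. Your reduction shrinks the wrong algebra.
\begin{itemize}
\item After replacing $C$ by $E=\varphi(A)\cap\psi(B)$ and $A$ by $A'$, the map $\psi$ no longer takes values in $E$.
\item In any case $p_B(D)=\psi^{-1}(\varphi(A))$ however you shrink $A$ and $C$. So ``$p_B$ is surjective in this situation'' is false unless $\psi(B)\subseteq\varphi(A)$.
\item What must be cut down is $B$, to $B'=\psi^{-1}(\varphi(A))$. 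The whole content is then to show that $y$ lies in the closure of $L(B',\Gamma)$.
\item Your step (iii) asks about $x$ instead, which does not help to lift $y$, and no argument is given for either.
\end{itemize}

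\textbf{A fix under amenability.} This avoids the reduction altogether. The coefficient maps $E_g$ satisfy $\varphi(E_g(x))=E_g(\tilde\varphi(x))=E_g(\tilde\psi(y))=\psi(E_g(y))$, so $(E_g(x),E_g(y))\in D_g$. Take finitely supported positive definite $m_n\to1$ pointwise (from F\o lner sets), and let $M_n$ be the associated Fourier multipliers, which commute with all induced maps. Then $z_n=\sum_g m_n(g)(E_g(x),E_g(y))\delta_g\in L(D,\Gamma)$ satisfies $\Phi(z_n)=(M_nx,M_ny)\to(x,y)$. Since $\Phi$ has closed range, $(x,y)$ lies in it. The same coefficient argument also gives injectivity.

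\textbf{Your step (ii) worry.} It is real, and it is not specific to your route. The paper's identities $\ker\tilde f=\ker f\rtimes\Gamma$ and $\tilde f(A\rtimes\Gamma)=f(A)\rtimes\Gamma$ for non-surjective $f$ presuppose that $f(A)\rtimes\Gamma\to B\rtimes\Gamma$ is injective. Your $\ker\tilde p_B=I\rtimes\Gamma$ does too. For full crossed products the statement actually fails without such a hypothesis. Take $\Gamma=F_2$ and the unital inclusions $A=B=\C\hookrightarrow C=C(\partial F_2)$. The pullback is $\C$, so the left side is $C^*(F_2)$. The right side, however, contains $(x,0)$ for every $0\neq x\in\ker(C^*(F_2)\to C^*_r(F_2))$, because $C(\partial F_2)\rtimes F_2$ is a reduced crossed product.

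\textbf{Your fallback is sound.} It also covers how the paper uses the proposition (mapping cones and cylinders, with $\ev_1$). Suppose $\psi(B_g)=C_g$ for all $g$. Then:
\begin{itemize}
\item $p_A(D_g)=A_g$, so $\tilde p_A$ is onto with kernel $J\rtimes\Gamma$.
\item $\tilde p_B$ is injective on $J\rtimes\Gamma$, because $p_B$ maps $J$ isomorphically onto the ideal $\ker\psi$.
\item $\ker\tilde\psi=\ker\psi\rtimes\Gamma=\tilde p_B(J\rtimes\Gamma)$.
\end{itemize}
Hence your lift-and-correct argument, with $A$ and $B$ interchanged, proves the isomorphism for full crossed products. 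It needs no reduction and no amenability, and injectivity comes from this restriction rather than from the product of the two kernels.
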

\begin{proof}
	As the maps \(\pr_1\colon A\oplus_CB\to A\) and \(\pr_2\colon A\oplus_CB\to B\) are \(\Gamma\)-equivariant, they induce maps \(\widetilde\pr_1\colon (A\oplus_CB)\rtimes \Gamma\to A\rtimes\Gamma\) and \(\widetilde\pr_2\colon (A\oplus_CB)\rtimes \Gamma\to B\rtimes\Gamma\). By functoriality \(\widetilde\varphi\circ\widetilde\pr_1=\widetilde\psi\circ \widetilde\pr_2\), so that one obtains a \(^*\)-homomorphism
	\[(\widetilde\pr_1,\widetilde\pr_2)\colon (A\oplus_C B)\rtimes\Gamma\to (A\rtimes\Gamma)\oplus_{C\rtimes\Gamma}(B\rtimes\Gamma).\]
	To see that it is an isomorphism it suffices to show by \cite{Ped99}*{Prop.~3.1} the following three properties
	\begin{enumerate}
		\item\label{item:kernel-intersection} \(\ker(\widetilde\pr_1)\cap\ker(\widetilde\pr_2)=\{0\}\),
		\item\label{item:images} \(\widetilde\psi^{-1}(\widetilde\varphi(A\rtimes\Gamma))=\widetilde\pr_2((A\oplus_CB)\rtimes\Gamma)\),
		\item \label{item:image-kernel}\(\widetilde\pr_1(\ker \widetilde\pr_2)=\ker(\widetilde\varphi)\).
\end{enumerate}
	First, note that for a \(\Gamma\)-equivariant \(^*\)-homomorphism \(f\colon A\to B\) one has a short exact sequence
	\begin{equation*}
		\begin{tikzcd}
			0\arrow[r,""]&\ker(f)\arrow[r,""]&A\arrow[r,"f"] &f(A)\arrow[r,""]&0.
		\end{tikzcd}
	\end{equation*}
	As \(\ker f\) is a \(\Gamma\)-equivariant ideal, one has by \cref{res:ses-partial-actions-algebras} a short exact sequence
	\begin{equation*}
		\begin{tikzcd}
			0\arrow[r,""]&\ker(f)\rtimes\Gamma\arrow[r,""]&A\rtimes\Gamma\arrow[r,"\widetilde f"] &f(A)\rtimes\Gamma\arrow[r,""]&0.
		\end{tikzcd}
	\end{equation*}
	Hence, \(\ker(\tilde f)=\ker(f)\rtimes \Gamma\) and \(\tilde{f}(A\rtimes\Gamma)=f(A)\rtimes\Gamma\).
	In particular for \ref{item:kernel-intersection} this means \begin{equation*}\ker(\widetilde\pr_1)\cap\ker(\widetilde\pr_2)=\ker(\widetilde\pr_1)*\ker(\widetilde\pr_2) =(\ker(\pr_1)\rtimes \Gamma)*(\ker(\pr_2)\rtimes\Gamma).\end{equation*}
	For \(f_1\in L(\ker(\pr_1),\Gamma)\) and \(f_2\in L(\ker(\pr_2),\Gamma)\) one has \(f_1*f_2=0\) as \(\ker(\pr_1)\cdot\ker(\pr_2)=0\). By density \ref{item:kernel-intersection} follows. For \ref{item:images} one needs to show
		\begin{equation*}
		\widetilde\psi^{-1}(\varphi(A)\rtimes\Gamma)= \pr_2(A\oplus_CB)\rtimes\Gamma.
	\end{equation*}
	By density it suffices to show 
	\begin{equation*}
	\widetilde\psi^{-1}(L(\varphi(A),\Gamma))= L(\pr_2(A\oplus_CB),\Gamma).
	\end{equation*}
	 This is checked using \(\psi^{-1}(\varphi(A))=\pr_2(A\oplus_CB)\).
	Property \ref{item:image-kernel} is satisfied as
	\begin{equation*}
		\widetilde\pr_1(\ker \widetilde\pr_2)= \widetilde\pr_1(\ker(\pr_2)\rtimes\Gamma)=\pr_1(\ker \pr_2)\rtimes\Gamma = \ker(\varphi)\rtimes\Gamma = \ker(\widetilde\varphi).\qedhere
	\end{equation*}
\end{proof}
Using the description of partial crossed products as the \(C^*\)-algebra of the corresponding Fell bundle, see \cref{rem:fell-bundle}, the following statement follows from \cite{Exe17}*{Theorem~25.7}.
\begin{proposition}\label{res:partial-tensor-prod}
	Let \(\theta\) be a partial \(\Gamma\)-action on a \(C^*\)-algebra \(A\). For a further \(C^*\)-algebra \(B\), there is a partial \(\Gamma\)-action \(\theta\otimes\id\) on \(A\otimes_{\max}B\) given by \((A\otimes_{\max} B)_g=A_g\otimes_{\max} B\) and \((\theta\otimes\id)_g(a\otimes b)=\theta_g(a)\otimes b\) for \(a\in A_{g^{-1}}\) and \(b\in B\). Moreover, there is an isomorphism
	\begin{equation*}
		\Phi\colon(A\rtimes \Gamma)\otimes_{\max} B\cong (A\otimes_{\max}B)\rtimes \Gamma. 
	\end{equation*}
	with \(\Phi(f\otimes b)(g)=f(g)\otimes b\) for \(f\in L(A,\Gamma)\) compactly supported and \(b\in B\).
\end{proposition}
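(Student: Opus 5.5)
The plan is to go through the Fell bundle picture of \cref{rem:fell-bundle} and reduce the statement to \cite{Exe17}*{Theorem~25.7}. That theorem identifies the maximal tensor product of the cross-sectional \(C^*\)-algebra of a Fell bundle \(\mathcal B=(B_g)_{g\in\Gamma}\) with a \(C^*\)-algebra \(B\) as the cross-sectional algebra of the tensor product bundle \((B_g\otimes_{\max} B)_{g\in\Gamma}\).

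First I check that \(\theta\otimes\id\) is a partial action. Since \(A_g\) is a closed ideal of \(A\), \(A_g\otimes_{\max}B\) sits as a closed ideal in \(A\otimes_{\max}B\); this uses exactness of the maximal tensor product with respect to ideals. The maps \(\theta_g\otimes\id\colon A_{g^{-1}}\otimes_{\max}B\to A_g\otimes_{\max}B\) are \(^*\)-isomorphisms by functoriality of \(\otimes_{\max}\). The axioms of a partial action then only need to be checked on elementary tensors \(a\otimes b\), where they reduce to the axioms for \(\theta\).

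The one point that needs care is the intersection formula
\[
(A_{g^{-1}}\otimes_{\max}B)\cap(A_h\otimes_{\max}B)=(A_{g^{-1}}\cap A_h)\otimes_{\max}B .
\]
For ideals this follows from \(I\cap J=IJ\) together with \((I\otimes B)(J\otimes B)=IJ\otimes B\), where the latter is a closed-span computation using an approximate unit of \(B\).

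Next I compare Fell bundles. The Fell bundle of \(\theta\otimes\id\) has fibres \(A_g\otimes_{\max}B\,\delta_g\), with multiplication \((\theta_g\otimes\id)\bigl((\theta_{g^{-1}}\otimes\id)(x)y\bigr)\delta_{gh}\). On elementary tensors this is exactly the fibrewise tensor product of the bundle \(B_g=A_g\delta_g\) with \(B\): we have \((a\delta_g\otimes b)(a'\delta_h\otimes b')=(a\delta_g)(a'\delta_h)\otimes bb'\), and the involution matches in the same way.

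Combining \cite{Exe17}*{Proposition~16.28} with Theorem~25.7 then gives
\[
(A\rtimes\Gamma)\otimes_{\max}B\cong C^*(\mathcal B)\otimes_{\max}B\cong C^*(\mathcal B\otimes_{\max}B)\cong(A\otimes_{\max}B)\rtimes\Gamma .
\]
Tracking the isomorphisms on finitely supported sections shows that \(f\otimes b\) is sent to the section \(g\mapsto f(g)\otimes b\), which is the claimed formula for \(\Phi\).

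I expect the main obstacle to be identifying the fibres of the tensor product bundle in Exel's theorem with \(A_g\otimes_{\max}B\), rather than with some other completion of the algebraic tensor product \(A_g\odot B\). My first approach is to note that each \(B_g\) is a closed ideal (a Hilbert bimodule) inside \(A\delta_g\), and to use that maximal norms restrict correctly to ideals. Failing that, I would argue directly with universal properties: covariant representations of \((A\otimes_{\max}B,\Gamma,\theta\otimes\id)\) correspond to pairs consisting of a covariant representation of \((A,\Gamma,\theta)\) and a commuting representation of \(B\). Both sides are universal for this data, which yields \(\Phi\) and its inverse.
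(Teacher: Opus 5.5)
Your proposal is correct and is essentially the paper's argument. The paper's entire proof is the reduction, via the Fell bundle description in \cref{rem:fell-bundle} and \cite{Exe17}*{Proposition~16.28}, to \cite{Exe17}*{Theorem~25.7}; your checks that \(\theta\otimes\id\) is a partial action and that the bundle operations agree on elementary tensors make explicit what the paper leaves implicit.

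The fibre-identification issue you flag reduces to the unit fibre, because in a Fell bundle \(\norm{\xi}^2=\norm{\xi^*\xi}\) with \(\xi^*\xi\) in the unit fibre, and there both sides give \(A\otimes_{\max}B\). If that unit fibre is taken inside \((A\rtimes\Gamma)\otimes_{\max}B\), the canonical conditional expectation \(A\rtimes\Gamma\to A\) shows that \(A\otimes_{\max}B\to(A\rtimes\Gamma)\otimes_{\max}B\) is isometric.
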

\begin{corollary}\label{res:partial-c0}
	Let \(A\) be a \(C^*\)-algebra with a partial \(\Gamma\)-action and equip \(C([0,1], A)\) with the partial \(\Gamma\)-action given by \(C([0,1], A)_g=C([0,1], A_g)\) and \((g\cdot f)(t)=g\cdot f(t)\) for \(g\in\Gamma\), \(f\in C([0,1], A)\) and \(0\leq t\leq 1\). 
	Then there is an isomorphism \(C([0,1],A\rtimes\Gamma)\to C([0,1],A)\rtimes\Gamma\) intertwining the evaluation map \(\ev_t\) and \(\widetilde\ev_t\) for all \(t\in[0,1]\).
\end{corollary}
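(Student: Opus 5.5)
We derive the corollary from \cref{res:partial-tensor-prod} by taking \(B=C([0,1])\).

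Since \(C([0,1])\) is commutative, hence nuclear, the maximal tensor product agrees with the minimal one. This gives the standard identifications
\begin{equation*}
	C([0,1])\otimes_{\max} D\cong C([0,1],D),\qquad h\otimes d\mapsto (t\mapsto h(t)d),
\end{equation*}
valid for every \(C^*\)-algebra \(D\). We use this twice: with \(D=A\rtimes\Gamma\) and with \(D=A\). For each ideal \(A_g\subseteq A\) it also identifies \(A_g\otimes C([0,1])\) with the ideal \(C([0,1],A_g)\subseteq C([0,1],A)\).

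Next we check that the partial action \(\theta\otimes\id\) of \cref{res:partial-tensor-prod} corresponds to the given partial action on \(C([0,1],A)\). The ideals match by the identification just described. On elementary tensors \(a\otimes h\) with \(a\in A_{g^{-1}}\), the action \(\theta\otimes\id\) sends the function \(t\mapsto h(t)a\) to \(t\mapsto h(t)\theta_g(a)\). This equals \(g\cdot f\) for \(f=h a\). Elementary tensors span a dense subspace of \(C([0,1],A_{g^{-1}})\), and both actions are \(^*\)-isomorphisms, so they agree on all of \(C([0,1],A_{g^{-1}})\).

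Composing the identifications with \(\Phi\) yields an isomorphism
\begin{equation*}
	\Psi\colon C([0,1],A\rtimes\Gamma)\cong (A\rtimes\Gamma)\otimes C([0,1])\xrightarrow{\ \Phi\ }(A\otimes C([0,1]))\rtimes\Gamma\cong C([0,1],A)\rtimes\Gamma.
\end{equation*}
On a function \(t\mapsto h(t)f\), with \(f\in L(A,\Gamma)\) finitely supported and \(h\in C([0,1])\), it is given by
\begin{equation*}
	\Psi(hf)(g)=\bigl(t\mapsto h(t)f(g)\bigr).
\end{equation*}

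It remains to verify the intertwining relation \(\widetilde\ev_t\circ\Psi=\ev_t\). The map \(\ev_t\colon C([0,1],A)\to A\) is \(\Gamma\)-equivariant: it sends \(C([0,1],A_g)\) into \(A_g\), and it commutes with the pointwise action. Hence \cref{res:induced-map-crossed-product} provides \(\widetilde\ev_t\). On the elements \(hf\) above,
\begin{equation*}
	\widetilde\ev_t(\Psi(hf))(g)=h(t)f(g)=\ev_t(hf)(g).
\end{equation*}
Both sides of the relation are continuous \(^*\)-homomorphisms. The elements \(hf\) span a dense subspace of \(C([0,1],A\rtimes\Gamma)\), since the finitely supported \(f\) are dense in \(A\rtimes\Gamma\) and elementary tensors are dense in the tensor product. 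Therefore \(\widetilde\ev_t\circ\Psi=\ev_t\) on all of \(C([0,1],A\rtimes\Gamma)\).

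The only delicate step is checking that the tensor-product partial action coincides with the pointwise one, including on the ideals. Once that is settled, everything else is bookkeeping on dense subspaces.
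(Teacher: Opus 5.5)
Your proposal is correct and takes essentially the paper's route: the paper gives no separate proof and presents the corollary as an immediate consequence of \cref{res:partial-tensor-prod} with \(B=C([0,1])\), using \(C([0,1])\otimes_{\max}D\cong C([0,1],D)\). Your checks that \(\theta\otimes\id\) agrees with the pointwise action on \(C([0,1],A_{g^{-1}})\), and that \(\widetilde\ev_t\circ\Psi=\ev_t\) on the dense span of the elements \(hf\), supply exactly the details the paper leaves implicit.
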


\comment{to do: Check the maps between the spaces of equivalence classes of covariant representations are inverse to each other. }
%
%
In particular, one also gets \(C_0((0,1],A\rtimes \Gamma)\cong C_0((0,1],A)\rtimes\Gamma\). 
Applying \cref{res:partial_cossed-fibred-product} to the mapping cone, see \cref{ex:mapping-cone}, we obtain  the following result from  \cref{res:partial-c0}. 
\begin{corollary}\label{res:ses-mapping-cone-partial-action}
	Let \(A,B\) be \(C^*\)-algebras with partial \(\Gamma\)-actions and \(f\colon A\to B\) an equivariant \(^*\)-homomorphism and \(\tilde f\colon A\rtimes \Gamma\to B\rtimes \Gamma\) the induced map from \cref{res:induced-map-crossed-product}. Then
		there is a commuting diagram where the vertical rows are isomorphisms
	\begin{equation*}
		\begin{tikzcd}
			0\arrow[r,""]&C_f\rtimes\Gamma\arrow[r,""]\arrow[d,"\cong"]&Z_f\rtimes \Gamma\arrow[r,"\widetilde{\ev}_0"]\arrow[d,"\cong"] &B\rtimes \Gamma\arrow[r,""]\arrow[d,"="]&0\\
			0\arrow[r,""]&C_{\tilde f}\arrow[r,""]&Z_{\tilde f}\arrow[r,"\ev_0"] &B\rtimes \Gamma\arrow[r,""]&0.
		\end{tikzcd}
	\end{equation*}
\end{corollary}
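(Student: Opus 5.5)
We want a commuting diagram with exact rows whose vertical maps are isomorphisms. The plan is to build each vertical arrow from the pullback isomorphism of \cref{res:partial_cossed-fibred-product} followed by the identification of \cref{res:partial-c0}. The diagram then commutes by naturality.

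First, the mapping cylinder. By \cref{ex:mapping-cone}, \(Z_f=A\oplus_B C([0,1],B)\) is the pullback along \(f\) and \(\ev_1\), with the partial \(\Gamma\)-action of \cref{res:partial-action-on-pullback}. Both maps are \(\Gamma\)-equivariant. \cref{res:partial_cossed-fibred-product} gives an isomorphism
\[
Z_f\rtimes\Gamma\cong (A\rtimes\Gamma)\oplus_{B\rtimes\Gamma}\bigl(C([0,1],B)\rtimes\Gamma\bigr),
\]
with fibre maps \(\widetilde f\) and \(\widetilde\ev_1\). \cref{res:partial-c0} supplies an isomorphism \(\Psi\colon C([0,1],B\rtimes\Gamma)\to C([0,1],B)\rtimes\Gamma\) with \(\widetilde\ev_t\circ\Psi=\ev_t\). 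Replacing the second factor by \(C([0,1],B\rtimes\Gamma)\) via \(\Psi\) turns the fibre map into \(\ev_1\). The right-hand side therefore becomes \(\{(a,b)\in (A\rtimes\Gamma)\oplus C([0,1],B\rtimes\Gamma)\colon \widetilde f(a)=b(1)\}=Z_{\tilde f}\).

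Second, the mapping cone. We do the same with \(C_0((0,1],B)\) and \(\ev_1\). We need the restricted isomorphism \(C_0((0,1],B\rtimes\Gamma)\cong C_0((0,1],B)\rtimes\Gamma\) noted after \cref{res:partial-c0}. To get it compatibly, observe that \(C_0((0,1],B)\) is the \(\Gamma\)-invariant ideal \(\ker\ev_0\) of \(C([0,1],B)\). By \cref{res:ses-partial-actions-algebras}, \(C_0((0,1],B)\rtimes\Gamma\) is the ideal \(\ker\widetilde\ev_0\) inside \(C([0,1],B)\rtimes\Gamma\). Since \(\Psi\) intertwines \(\ev_0\) and \(\widetilde\ev_0\), it maps \(\ker\ev_0=C_0((0,1],B\rtimes\Gamma)\) onto this ideal. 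This yields \(C_f\rtimes\Gamma\cong C_{\tilde f}\).

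Third, the top row. The map \(\ev_0\colon Z_f\to B\), \((a,b)\mapsto b(0)\), is equivariant and surjective (take \(a=0\) and \(b\) vanishing at \(1\)). Its kernel is \(C_f\), which is \(\Gamma\)-invariant. \cref{res:ses-partial-actions-algebras} gives exactness of the top row, with the quotient identified with \(B\rtimes\Gamma\) via \(\widetilde\ev_0\). The bottom row is the analogous sequence for \(\tilde f\), exact by the same elementary argument.

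Commutativity. Both vertical maps have the form \((\widetilde\pr_1,\widetilde\pr_2)\) followed by \(\id\oplus\Psi^{-1}\). The cone isomorphism is the restriction of the cylinder isomorphism, which gives the left square. For the right square, \(\widetilde\ev_0\) on \(Z_f\rtimes\Gamma\) factors as \(\widetilde\ev_0\circ\widetilde\pr_2\) by functoriality (\cref{res:induced-map-crossed-product}), and \(\widetilde\ev_0\circ\Psi=\ev_0\).

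The main obstacle is the ideal compatibility in the second step. One must check that the pullback isomorphism for the cylinder restricts to the one for the cone, i.e. that both are induced by the inclusion \(C_f\subset Z_f\). Functoriality of \(\widetilde{(\cdot)}\) on the inclusions \(C_0((0,1],B)\subset C([0,1],B)\) and \(C_f\subset Z_f\) handles this, together with injectivity of \(\widetilde\iota\) from \cref{res:ses-partial-actions-algebras}.
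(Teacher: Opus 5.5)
Your proposal is correct and follows the paper's own route. The paper obtains the corollary by applying \cref{res:partial_cossed-fibred-product} to the pullback descriptions of \(C_f\) and \(Z_f\) from \cref{ex:mapping-cone}, then identifying \(C([0,1],B)\rtimes\Gamma\) (resp.\ \(C_0((0,1],B)\rtimes\Gamma\)) with \(C([0,1],B\rtimes\Gamma)\) (resp.\ \(C_0((0,1],B\rtimes\Gamma)\)) via \cref{res:partial-c0}; your restriction of \(\Psi\) to \(\ker\ev_0\) and your use of \cref{res:ses-partial-actions-algebras} and functoriality for exactness and commutativity spell out details the paper leaves implicit.
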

\subsection{Globalization}
Given a partial action \(\theta\) of \(\Gamma\) on a locally compact Hausdorff space \(X\) one may ask if it is the restriction of a (global) group action of \(\Gamma\) on a larger space \(Y\) containing \(X\).
\begin{definition}
	Let \(\theta\) be a partial group action of \(\Gamma\) on a locally compact Hausdorff space \(X\). A topological space \(Y\) with a \(\Gamma\)-action \(\alpha\) is called a \emph{globalization} of the partial action, if 
	\begin{enumerate}
		\item \(X\subseteq Y\) is an open subset and \(X_g=X\cap\alpha_g(X)\) for all \(g\in\Gamma\),
		\item \(\theta_g=\alpha_g|_{X_{g^{-1}}}\) for all \(g\in\Gamma\),
		\item \(\bigcup_{g\in\Gamma}\alpha_g(X)=Y\).
	\end{enumerate}
\end{definition}
Such globalizations always exist and are unique, see \cite{Exe17}*{Proposition~5.5}. The proof is constructive. Namely, one sets \(Y=\Gamma\times X/\sim\) for the equivalence relation
\begin{equation*}
	(g,x)\sim(h,y) \Leftrightarrow x\in X_{g^{-1}h}\text{ and }\theta_{h^{-1}g}(x)=y.
\end{equation*}
Then \(X\) is embedded via \(x\mapsto[(e,x)]\) and \(\Gamma\) acts on \(Y\) by \(\alpha_g([h,x])=[gh,x]\). The space \(Y\) is not  Hausdorff in general; however,  this holds under the following condition.

\begin{proposition}[\cite{Exe17}*{Propositions~5.5, 5.7 and 28.4, Theorem~28.8}]\label{res:globalization}
	Let \(\theta\) be a partial action on a locally compact Hausdorff space \(X\) for which \(X_g\) are clopen in \(X\) for all \(g\in\Gamma\). Then it admits a globalization with a locally compact Hausdorff space \(Y\) and \(C_0(X)\rtimes\Gamma\) and \(C_0(Y)\rtimes \Gamma\) are Morita equivalent. 
\end{proposition}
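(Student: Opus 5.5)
The plan is to follow the constructive globalization recalled just before the statement, and to use clopenness of the domains \(X_g\) for both Hausdorffness and Morita equivalence. Set \(Y=\Gamma\times X/\sim\) with \((g,x)\sim(h,y)\) iff \(x\in X_{g^{-1}h}\) and \(\theta_{h^{-1}g}(x)=y\). Reflexivity, symmetry and transitivity follow from the partial action axioms, using that (ii) is an equality. Give \(Y\) the quotient topology and let \(q\colon\Gamma\times X\to Y\) be the quotient map.

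\textbf{Step 1: \(q\) is open.} The saturation of an open set \(\{g\}\times U\) is \(\bigcup_h \{h\}\times\theta_{h^{-1}g}(U\cap X_{g^{-1}h})\). Each piece is open, since \(X_{g^{-1}h}\) is open and \(\theta_{h^{-1}g}\) is a homeomorphism onto the open set \(X_{h^{-1}g}\). Hence \(x\mapsto[(e,x)]\) is an open embedding of \(X\), and \(Y\) is locally compact because it is covered by the open copies \(\alpha_g(X)=q(\{g\}\times X)\). The action \(\alpha_g[h,x]=[gh,x]\) is well defined and continuous. The identities \(X\cap\alpha_g(X)=X_g\) and \(\alpha_g|_{X_{g^{-1}}}=\theta_g\) are direct checks, and \(\bigcup_g\alpha_g(X)=Y\) holds by construction.

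\textbf{Step 2: Hausdorffness.} This is the main obstacle. Because \(q\) is open, \(Y\) is Hausdorff iff the graph \(R=\{((g,x),(h,y)):(g,x)\sim(h,y)\}\) is closed in \((\Gamma\times X)^2\). Since \(\Gamma\) is discrete, it suffices to fix \(g,h\) and take \(k=g^{-1}h\). Then \(R\cap(\{g\}\times X\times\{h\}\times X)\) is the graph of \(\theta_{k^{-1}}\colon X_k\to X_{k^{-1}}\). Suppose \((x_i,y_i)\to(x,y)\) with \(x_i\in X_k\) and \(y_i=\theta_{k^{-1}}(x_i)\). Since \(X_k\) is closed, \(x\in X_k\). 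Continuity of \(\theta_{k^{-1}}\) on \(X_k\) then gives \(y=\theta_{k^{-1}}(x)\). So the graph is closed, and \(Y\) is Hausdorff. This is exactly where closedness of the \(X_g\) enters; without it, limit points of \(X_k\) on \(\partial X_k\) produce non-separable points.

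\textbf{Step 3: Morita equivalence.} Since \(X\) is open in \(Y\), \(C_0(X)\) is an ideal of \(C_0(Y)\). The partial action on \(X\) is exactly the restriction of \(\alpha\) to \(X\). Because \(\Gamma X=Y\), the ideal \(C_0(X)\) generates \(C_0(Y)\) as a \(\Gamma\)-invariant ideal, i.e. \(\overline{\operatorname{span}}\,\Gamma\cdot C_0(X)=C_0(Y)\). The standard result is that for a restriction of a global action to an ideal, \(C_0(X)\rtimes\Gamma\) is isomorphic to the corner \(p\,(C_0(Y)\rtimes\Gamma)\,p\). Here \(p\) is the multiplier given by the characteristic function of \(X\), or more generally an approximate unit of \(C_0(X)\). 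This corner is full because the ideal it generates contains \(C_0(Y)\), by the generating property. A full corner is Morita equivalent to the ambient algebra, which gives the claim.

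To verify the corner identification concretely, define \(C_0(X)\rtimes\Gamma\to C_0(Y)\rtimes\Gamma\) on \(L(C_0(X),\Gamma)\) by \(f\mapsto\sum f(g)\delta_g\). This is a \(*\)-homomorphism, since \(f(g)\in C_0(X_g)\) and the convolution formulas agree with the global ones. Its image is dense in \(C_0(X)(C_0(Y)\rtimes\Gamma)C_0(X)\), because \(a\,b\delta_g\,c=a\,b\,\alpha_g(c)\delta_g\) with \(a\,\alpha_g(c)\in C_0(X\cap\alpha_g X)=C_0(X_g)\). Injectivity follows from the faithful-conditional-expectation (Fell bundle) description in \cref{rem:fell-bundle}; equivalently, one can quote the cited result of \cite{Exe17} (Theorem~28.8) at this point.
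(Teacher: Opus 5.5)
Steps 1 and 2 are correct. They reproduce the content behind the cited Propositions~5.5 and~5.7 of \cite{Exe17}; the paper itself gives no argument beyond the citation. This covers openness of $q$, the open embedding of $X$, and Hausdorffness via closedness of the graphs, which you correctly deduce from closedness of the $X_k$. The fullness and density parts of Step 3 are also fine.

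One small omission: $p=\chi_X$ is a multiplier of $C_0(Y)$ only because $X$ is also closed in $Y$. This does hold, since $Y\setminus X=\bigcup_{g}\alpha_g(X\setminus X_{g^{-1}})$ is open by clopenness of $X_{g^{-1}}$, but you should say so. Alternatively, work directly with the hereditary subalgebra $\overline{C_0(X)(C_0(Y)\rtimes\Gamma)C_0(X)}$, which needs no closedness.

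The step that does not hold as justified is injectivity of $C_0(X)\rtimes\Gamma\to C_0(Y)\rtimes\Gamma$. In this paper $\rtimes$ denotes the full crossed product, the completion with respect to all covariant representations. The canonical conditional expectation onto the coefficient algebra is faithful on the reduced crossed product. On the full one it is faithful only when full and reduced coincide, e.g. when $\Gamma$ is amenable.

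So your expectation argument yields an injective map $C_0(X)\rtimes_r\Gamma\to C_0(Y)\rtimes_r\Gamma$, and hence Morita equivalence of the reduced crossed products. It does not give injectivity for the full crossed products when $\Gamma$ is non-amenable. The two justifications you offer are therefore not ``equivalent''.

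For the full case a genuinely different input is needed. One must show that every covariant representation of $(C_0(X),\Gamma,\theta)$ is a compression of a covariant representation of $(C_0(Y),\Gamma,\alpha)$. This requires a dilation or induction argument at the level of representations, or of Fell bundles (the partial-action bundle is a hereditary subbundle of the bundle of the global action). This is what Abadie's enveloping-action theorem, i.e.\ the cited results of \cite{Exe17}, provides.

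Wherever the paper actually uses this proposition, $\Gamma$ is amenable: the cylinder example has $\Gamma=\Z$, and \cref{lem:replaceP} uses groups of polynomial growth. There full and reduced crossed products coincide, so your self-contained argument suffices. For the statement as formulated, with arbitrary countable $\Gamma$, you must rely on the citation at that point.
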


\section{Partial actions for closed sets}\label{sec:closed}
Suppose now that \(W\) is a Riemannian manifold and that \(M\subset W\) is a compact submanifold of codimension~\(0\) with boundary \(\partial M\). We assume that \(\alpha\colon\Gamma\curvearrowright W\) is an action by smooth isometries. As \(M_g=M\cap \alpha_g(M)\) will be in general not an open subset of \(M\), the theory of partial group actions cannot be applied directly to \(M\). As in \cite{Sav18}, it is beneficial to construct a new manifold \(V\) by cutting \(W\) along the images of the boundary \(\partial M\) under the group action. Then the subspace \(Y\subseteq V\) corresponding to \(M\) will admit a partial \(\Gamma\)-action.
We strengthen the regularity assumption of \cite{BNSS22} as follows.
\begin{assumption}\label{ass:regularity1}
	Let \(Z_1,\ldots,Z_k\) denote the connected components of \(\partial M\). For all \(g,h\in\Gamma\) and \(Z_i,Z_j\) for \(i,j=1,\ldots,k\), one has either \(\alpha_g(Z_i)\cap \alpha_h(Z_j)=\emptyset\) or \(\alpha_g(Z_i) = \alpha_h(Z_j)\). Moreover, we assume that \(\Gamma\cdot \partial M=\bigcup_{g\in\Gamma}\alpha_g(\partial M)\) is a smooth closed submanifold of \(W\).
\end{assumption}

Under this assumption, $M$ will contain only finitely many images of  components of $\partial M$. Indeed, suppose $M$ contains infinitely many. Choosing a point in each of them we obtain a sequence with an accumulation point, say $z$, in $M$. Since $\Gamma\cdot \partial M$ is a closed manifold, so is its intersection with $M$. Hence $z \in \alpha_g(\partial M)$ for some $g\in \Gamma$, contradicting the fact that $\alpha_g (\partial M)$ is a manifold and therefore, near $z$, the zero set of a function with non-vanishing differential. 

\comment{Is there are better way to write these assumptions? For example, that \(\exists\, \varepsilon>0\) such that \(d(\alpha_g(Z_i),\alpha_h(Z_j))\geq \varepsilon\) whenever they are not equal. And deduce then that \(\Gamma\cdot\partial M\) is a closed submanifold. This should also imply that \(Y\) below has finitely many connected components?`}
\begin{example}\label{ex:shifts-closed}
Consider \(W=S^1\times\R\) with the shift action of \(\Z\) as in \cref{ex:shifts}, then taking \(M=S^1\times[0,\tfrac{3}{2}]\) satisfies the assumption, as \(\Gamma \cdot\partial M=S^1\times\tfrac{1}{2}\Z\).
\end{example}
\begin{example}
	Let \(W=\R^2\) be equipped with the scaling action of \(\Z\) as in \cref{ex:disk_skalings}. Then the closed disk \(M=\overline{\mathbb D}\) does not satisfy the assumptions as \(\Gamma\cdot\partial M=\bigcup_{k\in\Z}2^k\cdot S^1\) has \(0\) as a limit point. 
\end{example}
Under \cref{ass:regularity1} we can cut \(W\) along \(\Gamma\cdot\partial M\) to obtain a new manifold \(V\). 
\begin{definition}\label{def:cutted-manifold}
	Under \cref{ass:regularity1}, let \(V\) be the spherical blowup of \(\Gamma\cdot\partial M\) in \(W\). \comment{Add a good reference for spherical blowups?}
\end{definition}
As \(\Gamma\) acts by isometries and \(\Gamma\cdot\partial M\) is \(\Gamma\)-invariant, there is an induced \(\Gamma\)-action of \(V\) by functoriality of the spherical blowup. 
\begin{definition}\label{def:cutted-submanifold}
	Under \cref{ass:regularity1}, let \(Y\) denote the spherical blowup of \(\Gamma\cdot\partial M\cap M\) in \(M\).
\end{definition}
Then we can view \(Y\) as a submanifold of \(V\). \comment{Is this clear?} As \(Y\) is open in \(V\), it inherits a partial \(\Gamma\)-action \(\theta\) by \cref{res:action-restricts-to-partial-action}. In particular, this means that the \(\Gamma\)-action on \(V\) is a globalization of the partial \(\Gamma\)-action on \(Y\).
\begin{lemma}\label{res:partial-action-blowup}
	Restricting the induced \(\Gamma\)-action on \(V\) to \(Y\) yields a partial group action \(\theta\) of \(\Gamma\) on~\(Y\).
\end{lemma}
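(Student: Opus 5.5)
The plan is to realize \(Y\) as a subset of \(V\) that is open in the appropriate sense, and then apply \cref{res:action-restricts-to-partial-action} to the \(\Gamma\)-action on \(V\).

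\textbf{Step 1: the action on \(V\).} Let \(\beta\colon V\to W\) be the blow-down map. Since \(\Gamma\cdot\partial M\) is \(\Gamma\)-invariant and each \(\alpha_g\) is a diffeomorphism of \(W\) preserving it, functoriality of the spherical blowup gives diffeomorphisms \(\tilde\alpha_g\colon V\to V\) with \(\beta\circ\tilde\alpha_g=\alpha_g\circ\beta\). On the dense subset \(V\setminus\partial V\cong W\setminus\Gamma\cdot\partial M\) we have \(\tilde\alpha_{gh}=\tilde\alpha_g\tilde\alpha_h\), and hence this identity holds on all of \(V\). So \(\tilde\alpha\) is a group action by homeomorphisms, and \(V\) is a locally compact Hausdorff space (a manifold with boundary).

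\textbf{Step 2: \(Y\) sits in \(V\) as an open, closed subset.} By \cref{ass:regularity1} the hypersurface \(\Gamma\cdot\partial M\) is closed and, near \(M\), consists of finitely many components. Among these components are the components of \(\partial M\) itself. Hence \(M^\circ\setminus\Gamma\cdot\partial M\) is a union of connected components of \(W\setminus \Gamma\cdot\partial M\). Its closure in \(V\) is therefore a union of connected components of \(V\), and this closure is canonically the spherical blowup of \(M\) along \(\Gamma\cdot\partial M\cap M\), that is, \(Y\).

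The main point to check is what happens near \(\partial M\). There only one side of \(\partial M\) lies in \(M\), so blowing up \(M\) along \(\partial M\) does not change \(M\) locally. In \(V\), the corresponding piece is exactly the side of the blown-up hypersurface facing into \(M\). A collar argument in the local product coordinates \(\Gamma\cdot\partial M\times(-\varepsilon,\varepsilon)\) identifies the two models. It follows that \(Y\) is a union of components of \(V\), so \(Y\) is both open and closed in \(V\). This identification is the step I expect to need the most care.

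\textbf{Step 3: conclusion.} Since \(Y\subseteq V\) is open, \cref{res:action-restricts-to-partial-action} applied to the action \(\tilde\alpha\) on \(V\) gives the partial action \(\theta\) on \(Y\). Its domains are \(Y_g=\{y\in Y\colon \tilde\alpha_{g^{-1}}(y)\in Y\}\) and its maps are \(\theta_g=\tilde\alpha_g|_{Y_{g^{-1}}}\). Because \(Y\) is also closed in \(V\), each \(Y_g=Y\cap\tilde\alpha_g(Y)\) is in addition clopen in \(Y\). This extra property is what will later allow \cref{res:globalization} to be applied.
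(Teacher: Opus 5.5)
Your proposal is correct and follows the paper's own argument: the paper observes that \(Y\) is open in \(V\), applies \cref{res:action-restricts-to-partial-action} to the induced action on \(V\), and then uses that \(Y\) is also closed in \(V\) to conclude that each \(Y_g=Y\cap\alpha_g(Y)\) is clopen. Your Step~2 shows that \(Y\) is a union of connected components of \(V\); this supplies the justification for openness and closedness that the paper only asserts.
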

 As \(Y\) is also closed in \(V\), \(\alpha_g(Y)\) is closed for every \(g\in\Gamma\). This implies that \(Y_g= Y\cap \alpha_g(Y)\) is a clopen subset of \(Y\) for every \(g\in \Gamma\). Moreover, every \(Y_g\) is itself a smooth manifold with boundary. \comment{Is this clear as \(Y\) and \(\alpha_g(Y)\) have codimension \(0\) and the connected components of boundaries are either disjoint or equal?}
\begin{example}\label{ex:cylinder-continued}
	In \cref{ex:shifts-closed}, \(V\) is the disjoint union \(V=\bigsqcup_{k\in \Z}S^1\times[\frac{k}{2},\tfrac{k+1}{2}]\) and \(Y=S^1\times[0,\tfrac{1}{2}]\sqcup S^1\times[\tfrac{1}{2},1]\sqcup S^1\times[1,\tfrac{3}{2}]\). Here, we have \(Y_{-1}=S^1\times[0,\tfrac{1}{2}]\), \(Y_0=Y\), \(Y_1=S^1\times[1,\tfrac{3}{2}]\) and \(Y_k=\emptyset\) for all \(k\notin\{-1,0,1\}\).
\end{example}
\begin{lemma}\label{res:boundary-invariant}
	The interior \(Y^\circ=M\setminus(\Gamma\cdot\partial M)\) and the boundary \(\partial Y\) are \(\Gamma\)-invariant and \(\theta\) restricts to a partial action on \(Y^\circ\) and \(\partial Y\) with \((Y^\circ)_g=(Y_g)^\circ\) and \((\partial Y)_g=\partial(Y_g)\), respectively.
\end{lemma}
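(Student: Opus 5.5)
The plan is to work in the globalization $V$ and use that the $\Gamma$-action on $V$ consists of diffeomorphisms of a manifold with boundary. Such maps send interior to interior and boundary to boundary. The identification $Y^\circ=M\setminus(\Gamma\cdot\partial M)$ follows directly from the construction of the spherical blowup. The blowup map $\beta\colon Y\to M$ is a diffeomorphism from $Y\setminus\beta^{-1}(\Gamma\cdot\partial M\cap M)$ onto $M\setminus(\Gamma\cdot\partial M)$. Moreover, $\partial Y=\beta^{-1}(\Gamma\cdot\partial M\cap M)$, the preimage of the blown-up hypersurface together with the original boundary $\partial M\subset\Gamma\cdot\partial M$.

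First, invariance. Since $\alpha$ acts on $V$ by diffeomorphisms, we have $\alpha_g(\partial V)=\partial V$ and $\alpha_g(V^\circ)=V^\circ$. Because $Y$ is a union of connected components of $V$ (it is clopen in $V$), we get $\partial Y=Y\cap\partial V$ and $Y^\circ=Y\cap V^\circ$. Hence for $x\in \partial Y\cap Y_{g^{-1}}$ we have $\theta_g(x)=\alpha_g(x)\in \partial V\cap Y=\partial Y$, and the same argument works for $Y^\circ$. This gives $\Gamma$-invariance in the sense defined before \cref{res:ses-partial-actions}. By the remark preceding that lemma, $\theta$ restricts to partial actions with $(\partial Y)_g=\partial Y\cap Y_g$ and $(Y^\circ)_g=Y^\circ\cap Y_g$.

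It then remains to identify these sets with $\partial(Y_g)$ and $(Y_g)^\circ$. Here I would use again that $Y_g=Y\cap\alpha_g(Y)$ is clopen in $Y$, hence a union of connected components of $Y$. This is the one point needing care. $Y$ and $\alpha_g(Y)$ are both unions of components of $V$: each is clopen in $V$, and $\alpha_g(Y)$ is clopen because $\alpha_g$ is a homeomorphism of $V$. So their intersection is a union of components of $V$, and therefore of $Y$. For a union of components $C$ of a manifold with boundary, $\partial C=C\cap\partial Y$ and $C^\circ=C\cap Y^\circ$. This yields $(\partial Y)_g=\partial(Y_g)$ and $(Y^\circ)_g=(Y_g)^\circ$.

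The main obstacle is justifying that the cut manifolds behave as claimed: $Y$ sits in $V$ as a clopen subset, and the blown-up hypersurface lies entirely in $\partial V$. Concretely, one must check that $\Gamma\cdot\partial M\cap M$ has no points in $V^\circ$ after blowing up. I would take this from \cref{def:cutted-manifold}, \cref{def:cutted-submanifold} and the statement that $Y_g$ is clopen, as the paper does.
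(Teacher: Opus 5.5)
Your proof is correct and follows the same route as the paper: both rest on the $\Gamma$-invariance of $\partial V$ under the global action on $V$ together with $\partial Y = Y\cap\partial V$. The paper gets invariance of $Y^\circ$ as the complement of $\partial Y$ rather than via $V^\circ$, and your extra step identifying $(\partial Y)_g=\partial Y\cap Y_g$ with $\partial(Y_g)$, using that $Y_g$ is open (clopen) in $Y$, fills in a detail the paper leaves implicit.
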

\begin{proof}
	It suffices to show that \(\partial Y\) is \(\Gamma\)-invariant, then its complement \(Y^\circ\) is also \(\Gamma\)-invariant.  But this is clear as \(\partial V\) is a \(\Gamma\)-invariant subset of \(V\).
\end{proof}
As for \(g\in\Gamma\), \(Y_g\) is clopen in \(Y\) and, similarly, \(\partial Y_g\) in \(\partial Y\), \cref{res:globalization} applies. As an example, we compute the \(K\)-theory of the crossed products for the \(\Z\)-action in the cylinder case.
\begin{example}
	Let \(V\) and \(Y\) be as in \cref{ex:cylinder-continued}.
	As the \(\Z\)-action on \(V\) globalizes the partial \(\Z\)-action on \(Y\), \(C(Y)\rtimes\Gamma\) is by \cref{res:globalization} Morita-equivalent to
	\begin{equation*}
		C(\Z\times S^1\times[0,\tfrac12])\rtimes\Z\cong \Compact \otimes C(S^1) \otimes C([0,1])
	\end{equation*}
	and hence, \(K_i(C(Y)\rtimes\Gamma)=\Z\) for \(i=1,2\). Similarly, \(C(\partial Y)\rtimes\Gamma\) is Morita-equivalent to 
	\begin{equation*}
		C(\Z\times S^1\times(\{0\}\sqcup\{\tfrac{1}{2}\})\rtimes\Z\cong \Compact \otimes C(S^1)\otimes (\C\oplus\C)
	\end{equation*}
	and thus, \(K_i(C(\partial Y)\rtimes\Gamma)=\Z^2\) for \(i=1,2\). As usual, \(\mathbb K\) denotes the compact operators. 
\end{example}

\section{Boutet de Monvel calculus}\label{sect:BdM}

\comment{Add references. Give a short overview of the calculus for \(Y\) Riemannian manifold. This should include:
	\begin{itemize}
		\item matrix structure, names and nature of the corresponding operator classes
		\item describe the order zero Boutet de Monvel algebra \(\Psi(Y,\partial Y)\subseteq \Bounded(L^2 Y\oplus L^2\partial Y)\) (in particular, which trace, Poisson and Green symbols of which degree are used)
		\item define \(\overline{\Psi(Y,\partial Y)}\) as the \(C^*\)-completion
\end{itemize}}

In the following we recall some properties of  Boutet de Monvel's algebra of operators of order and type zero. 

\subsection{Boutet de Monvel's calculus}\label{section:BoutetdeMonvelCalculus}
Let $Y$ be a compact manifold with boundary $\partial Y$, embedded in a  boundaryless manifold $\tilde Y$ of the same dimension, e.g., the double of $Y$. Moreover, let $E$ and $F$ be Hermitian  vector bundles over $Y$ and $\partial Y$, respectively. We fix a $\Gamma$-invariant Riemannian metric on $\tilde Y$ and endow $Y$ and $\partial Y$ with the measures induced by the Riemannian metric.

By $\Psi(Y,\partial Y)$ we denote the set of all operators of order and type zero in Boutet de Monvel's calculus, i.e.  all matrices of operators
\begin{eqnarray}\label{eq.D}
	D = \begin{pmatrix} P_++G&K\\T&S\end{pmatrix}: 
	\begin{array} {ccccc} 
		L^2(Y, E) & & L^2(Y, E) \\
		\oplus &\longrightarrow & \oplus\\
		L^2(\partial Y, F) & & L^2(\partial Y, F) 
	\end{array} .
\end{eqnarray}
Here, $P$ is a zero order classical pseudodifferential operator on $\tilde Y$, and $P_+$  acts on $u\in L^2(Y,E)$ by first extending $u$ by zero to a function $e^+u$ on $\tilde Y$, then restricting $Pe^+u$ to a function $r^+Pe^+u$ on  $Y$ so that $P_+ = r^+Pe^+$.  
For the calculus to work,  $P$ is required to satisfy the transmission condition at the boundary. This  is a condition on the homogeneous components of the symbol at the conormal in $S_{\partial Y}^*\tilde Y$.  Fix a local coordinate system $x=(x',x_n)$ near $\partial Y$ with $\partial Y $ locally given by $x_n=0$. Suppose the symbol $p$ of $P$ has the asymptotic expansion $p\sim\sum_{j=0}^\infty  p_{-j}$ with $p_{-j}$ homogeneous of degree $-j$ in $\xi$ for large $|\xi|$.  The transmission condition requires that, at $(\xi',\xi_n) = (0,\pm1)$, for all multi-indices $\alpha$, $\beta$, 
\begin{eqnarray*}
	D^\alpha_\xi D^\beta_x p_{-j}(x',0,0,-1) = (-1)^{-j-|\alpha|}   D^\alpha_\xi D^\beta_x p_{m-j}(x',0,0,1).
\end{eqnarray*}

The operator $S$ is a classical pseudodifferential operator of order zero on $\partial Y$;  $G$, $T$, and $K$  are  {\em singular Green}, {\em trace} and {\em potential} (or {\em Poisson}) operators, respectively, of order zero; $G$ and $T$ have type zero. In order to describe them we  introduce the subspaces $H^+$ and $H^-$ of $L^2(\R)$: 
$$H^\pm =\{\mathcal F(\chi_\pm v): v\in \mathcal S(\R)\}, $$ 
where $\chi_+$ is the characteristic function of $\R_+$,  $\chi_-$ that of $\R_-$, where $\R_\pm=\{t\in\R: t\gtrless 0\}$; $\mathcal F $ is the Fourier transform on $\R$.    

A trace symbol $t$ of order and type zero then is locally given by a function  $t=t(x',\xi',\xi_n) \in S^0_{\rm cl}(\R^{n-1} \times \R^{n-1}; H^-)$; a potential symbol of order zero is locally given by a function $k=k(x',\xi',\xi_n) \in S^{-1}_{\rm cl}(\R^{n-1} \times \R^{n-1}; H^+)$ and a singular Green symbol is locally given by a function $g=g(x',\xi',\xi_n,\eta_n)\in S^{-1}_{\rm cl}(\R^{n-1} \times \R^{n-1}; H^+\widehat\otimes_\pi H^-)$, see \cite[Definition 2.3.13]{Gru96} for more details.  
For fixed $(x',\xi')$ these symbols induce operators $k(x',\xi',D_n)\in \Bounded( \C ,L^2(\R_+))$, $t(x',\xi',D_n)\in \Bounded (L^2(\R_+) , \C)$ and  $g(x',\xi', D_n)\in \Bounded( L^2(\R_+))$, by  
\begin{eqnarray*}
	&&(k(x',\xi',D_n)c)(x_n)  = (2\pi)^{-\frac12}\int_0^\infty e^{ix_n\xi_n} k(x',\xi',\xi_n) \, d\xi_n c,\quad c\in \C,\\
	&& t(x',\xi',D_n) v = (2\pi)^{-\frac12} \int_0^\infty t(x',\xi',\xi_n) \mathcal F(e^+v)(\xi_n) \, d\xi_n, \quad v\in L^2(\R_+),\\
	&&(g(x',\xi',D_n) v)(x_n)  = (2\pi)^{-1} \int_0^\infty e^{ix_n\xi_n} g(x',\xi',\xi_n,\eta_n) \mathcal F(e^+v)(\eta_n) \, d\eta_n, \quad v\in L^2(\R_+),
\end{eqnarray*}
see \cite[(2.4.4), (2.4.5), (2.4.6)]{Gru96}. They are classical operator-valued symbols of order zero in the sense of \cite{S01}. The operators $G$, $T$ and $K$ are given by pseudodifferential quantization in $(x',\xi')$. This extends in a straightforward way to operators acting on sections of vector bundles. 

The operators in $\Psi(Y,\partial Y)$ form an  adjoint-invariant subalgebra of $\Bounded(L^2(Y,E)\oplus L^2(\partial Y,F))$. We denote by  
$\overline{\Psi(Y,\partial Y)}$ its closure in $\Bounded(L^2(Y,E)\oplus L^2(\partial Y,F))$, which is a $C^*$-algebra. 
For the purpose of this article it will be sufficient to consider the case where $E$ and $F$ are trivial one-dimensional complex bundles.

\subsection{Principal symbol maps}\label{sect:principal_symbol}

On $\overline{\Psi(Y,\partial Y)}$ two symbol maps are defined: the interior symbol 
\begin{eqnarray*}
	\sigma_{\interior}(D)=\sigma(D)|_{T^*Y} \in C^\infty(S^*Y) 
\end{eqnarray*}
and the boundary symbol $\sigma_{\boundary}(D)$. For the definition of the latter, we use the Riemannian metric to introduce $N\partial Y\subseteq T\tilde Y|_{\partial Y}$, the normal bundle to the boundary. \(N\partial Y\) can be trivialized by the inward-pointing normal unit vector field \(\nu\) and then \(N_z\partial Y_+\) denotes the half space \(\R_{+}\cdot\nu(z)\). 
Then $\sigma_{\boundary}(D)\in C^\infty(S^*\partial Y, \pi^*\End(L^2(N\partial Y_+)\oplus\C))$ is  given by
\begin{eqnarray*}\sigma_{\boundary} (D)(x',\xi') =
	\begin{pmatrix} 
		p^0(x',\xi',D_n) + g^{0}(x',\xi',D_n) &k^{0}(x',\xi',D_n) \\ t^{0}(x',\xi',D_n) & s^0(x',\xi') 
	\end{pmatrix} \in \Bounded(L^2(N_{x'}\partial Y_{+} )\oplus \C).
\end{eqnarray*}

with the operator-valued principal symbols 
of  \(p(x',\xi',D_n)\), \(g(x',\xi',D_n)\), \(k(x',\xi',D_n)\), \(t(x',\xi',D_n)\) and \(s(x',\xi')\).  
Here,  \(\End(L^2(N\partial Y_+)\oplus\C)\) denotes the vector bundle over \(\partial Y\) whose fiber over \(x'\in \partial Y\) is \(\Bounded(L^2(N_{x'}\partial Y_+)\oplus\C)\); $\pi^*$ denotes the pullback to $S^*\partial Y$.  

By continuity, both symbol maps have extensions, also denoted $\sigma_{\interior}$ and $\sigma_{\boundary}$, to  $\overline{\Psi(Y,\partial Y)}$:
\begin{equation}\label{eq:principal-symbol}
	(\sigma_{\interior},\sigma_{\boundary})\colon \overline{\Psi(Y,\partial Y)}\to C(S^*Y)\oplus C(S^*(\partial Y),\pi^*\End(L^2(N\partial Y_+)\oplus\C)).
\end{equation}

	\begin{remark}\label{rem:ident-with-trivial-bundle}
		One can identify the bundle \(\End(L^2(N\partial Y_+)\oplus\C)\) with the trivial bundle with fiber \(\Bounded(L^2(\R_+)\oplus \C)\) using the inward-pointing normal unit vector field \(\nu\).
	\end{remark}

Instead of considering the boundary symbol at \((x',\xi')\) as an operator on \(L^2(\R_+)\oplus \C\) one can also use the picture of Wiener--Hopf operators as described in \cite{BM71}*{Section~1}. 
Let $\Pi^\pm$ be the orthogonal projections in $L^2(\R)$ given by $u\mapsto \mathcal F\chi_\pm \mathcal F^{-1}$,  and  $\Pi_0^\pm: H^+\oplus H^-\to \C$ the map $u\mapsto \lim_{t\to0^\pm}(\mathcal F^{-1} u)(t)$.

Define \(\overline{H_+}=\mathcal F(L^2\R_+)\). In the following, it will be also useful to consider the following version of the boundary principal symbol.
\begin{definition}\label{def.hat_sigma_b}
	The boundary principal Wiener--Hopf operator of \(D\in\overline{\Psi(Y,\partial Y)}\) is given by
	\begin{equation*}
		\widehat {\sigma_{\boundary}}(D)(x',\xi')=(\mathcal F\oplus\id)\circ \sigma_{\boundary}(D)(x',\xi')\circ (\mathcal F^{-1}\oplus\id)\in\Bounded(\overline{H_+}\oplus\C)
	\end{equation*}
	for \((x',\xi')\in S^*\partial Y\).
	In more detail: 	For $(x',\xi')\in S^*\partial Y$,  
	\begin{eqnarray*}\label{eq:Fourier_boundary_symbol}
		\widehat{\sigma_{\boundary}} (D)(x',\xi') =
		\begin{pmatrix} 
			\Pi^+p_0(x',0,\xi',\xi_n) + \Pi^+_{0,\eta_n}g_{-1}(x',\xi',\xi_n,\eta_n) &k_{-1}(x',\xi',\xi_n) \\ \Pi^+_0 t_{0}(x',\xi',\xi_n) & s_0(x',\xi') 
		\end{pmatrix},
	\end{eqnarray*}
	considered as an element of $\Bounded(\Pi^+L^2(\R) \oplus \C)$. The operators act here as multiplication operators with respect to the $\xi_n$-variable. Note that 
	\begin{eqnarray*}
		(\Pi^+_{0,\eta_n}g_{-1}(x',\xi',\xi_n,\eta_n)u)(\xi_n) = \Pi^+_{0,\eta_n}(g_{-1}(x',\xi',\xi_n,\eta_n)u(\eta_n)),\quad u\in H^+.
	\end{eqnarray*}
\end{definition}

For more details on Boutet de Monvel's calculus see the original article \cite{BM71}, the monographs \cite{RS82}, \cite{Gru96}, or the short introduction \cite{S01}. 

\begin{remark}\label{rem:principal-symbol}
	One can show that \(\Compact(L^2(Y)\oplus L^2(\partial Y))= \ker(\sigma_{\interior},\sigma_{\boundary})\), see \cite[2.3.4 Theorem 1]{RS82}. Moreover, the induced map 
	\begin{equation*}(\overline{\sigma_{\interior}},\overline{\sigma_{\boundary}})\colon \Sigma=\overline{\Psi(Y,\partial Y)}/\Compact\to C(S^*Y)\oplus C(S^*(\partial Y),\pi^*\End(L^2(N\partial Y_+)\oplus\C))
	\end{equation*}
	is injective. Hence, by spectral invariance, an element of \(\overline{\Psi(Y,\partial Y)}\) is Fredholm if and only if its interior and boundary principal symbol are invertible. 
\end{remark}

\subsection{Toeplitz and Wiener-Hopf type operators}
Let \(f\in C^\infty(\overline\R)\), where \(\overline\R\) denotes the one point compactification of \(\R\). Then \(f\) defines a Toeplitz operator \(T_f\colon L^2(\R_+)\to L^2(\R_+)\) by
\begin{equation*}
	T_f=r^+\circ\mathcal F^{-1}\circ M_f\circ\mathcal F\circ e^+.
\end{equation*}
As before, \(r^+\colon L^2(\R)\to L^2(\R_+)\) and \(e^+\colon L^2(\R_+)\to L^2(\R)\) denote restriction and extension by~\(0\), \(\mathcal F\colon L^2(\R)\to L^2(\R)\) Fourier transform and \(M_f\) the multiplication operator associated with \(f\). Then ~\(f\) is called the symbol of \(T_f\). Note that \(T_f\) is unitarily equivalent to a usual Toeplitz operator via the Cayley transform. Denote by \(\mathfrak T\) the \(C^*\)-algebra generated by \(T_f\) with \(f\in C^\infty(\overline\R)\) and by \(\mathfrak T_0\) the ideal generated by \(T_f\) with \(f\in C^\infty(\overline\R)\) satisfying \(f(\infty)=0\). Note that \(\mathfrak T_0\) contains the compact operators on \(L^2(\R_+)\).

Consider the following subalgebras of \(\Bounded(L^2(\R_+)\oplus\C)\)
\begin{equation*}
	\mathfrak W=\begin{pmatrix}
		\mathfrak T & \ket{L^2(\R_+)}\\
		\bra{L^2(\R_+)} & \C
	\end{pmatrix},\quad\mathfrak W_0=\begin{pmatrix}
		\mathfrak T_0 & \ket{L^2(\R_+)}\\
		\bra{L^2(\R_+)} & \C
	\end{pmatrix}.
\end{equation*}
It is shown in \cite{MNS03}*{(8)} that \(\Sigma_{\boundary}:=\Image(\sigma_{\boundary})\) is contained in \(C_0(S^*\partial Y)\otimes \mathfrak W\). While the principal boundary symbol of the Green, Poisson, trace and pseudodifferential operator on the boundary give an element of \(C(S^*\partial Y)\otimes \Compact(L^2(\R_+)\oplus\C)\), a pseudodifferential operator \(P\) on \(Y\) with the transmission condition gives rise to a Toeplitz operator at \((x',\xi')\). Its symbol is \(p_0(x',0,\xi',\,\cdot\,)\) where \(p_0\) denotes the principal symbol of \(P\). Here, the transmission condition ensures that the symbol belongs to \(C^\infty(\overline \R)\).

Consider the map \(b\colon C(\partial Y)\to \Sigma_{\boundary}\) given by 
\begin{equation*}
	f\mapsto\begin{pmatrix*}
		f & 0\\
		0 & 0
	\end{pmatrix*}.
\end{equation*}
There is a split exact sequence by \cite{MNS03}*{Corollary 8}, see also \cite{MSS06}*{Theorem~4},
\begin{equation}\label{eq:ses-principal-boundary}
	\begin{tikzcd}
		0\arrow[r,""]&C(S^*\partial Y)\otimes\mathfrak W_0\arrow[r,""]&\Sigma_{\boundary}\arrow[r,""] &C(\partial Y)\arrow[r,""]&0,	
	\end{tikzcd}
\end{equation}	
where the split is given by \(b\).

\section{\(\Gamma\)-Boutet de Monvel algebra}\label{sec:partial-BM}
From now on, we will always assume the following situation.
\begin{assumption}
Let \(W\) be a Riemannian manifold and \(M\subseteq W\) a compact, codimension \(0\) submanifold with boundary \(\partial M\). Let \(\Gamma\) be a countable, discrete group which acts on \(W\) by smooth, orientation preserving isometries such that \cref{ass:regularity1} holds.
\end{assumption}
Then one can construct the space \(Y\) as in \cref{def:cutted-submanifold} and equip it with the partial \(\Gamma\)-action \(\theta\) as in \cref{res:partial-action-blowup}. Furthermore the boundary \(\partial Y\) is \(\Gamma\)-invariant, see \cref{res:boundary-invariant}.
The Riemannian metric on \(W\) induces volume forms \(\vol_Y\) on \(Y\) and \(\vol_{\partial Y}\) on \(\partial Y\) and the partial actions on \(Y\) and \(\partial Y\) give rise to partial isometries 
\begin{align}
	\nonumber
	U_g\colon L^2(Y,\vol_Y)&\to L^2(Y,\vol_Y)\\
	U_g(\varphi)(x)&=\begin{cases}
		\varphi(\theta_{g^{-1}}(x)) & \text{for }x\in Y_g,\\
	\nonumber
		0 &\text{else;}
	\end{cases}\\
	\nonumber
	V_g\colon L^2(\partial Y,\vol_{\partial Y})&\to L^2(\partial Y,\vol_{\partial Y})\\
	\label{eq:V_g2}
	V_g(\psi)(x)&=\begin{cases}
		 \psi(\theta_{g^{-1}}(x)) & \text{for }x\in \partial Y_g,\\
		0 &\text{else.}
	\end{cases}
\end{align}
\subsection{Partial \(\Gamma\)-action on the principal symbol algebra} 
We consider the induced partial \(\Gamma\)-action on the principal symbol algebra.
\paragraph{\textbf{Partial \(\Gamma\)-action on \(S^*Y\)}}
As \(\theta_g\colon Y_{g^{-1}}\to Y_g\) is a diffeomorphism, the differential \(d\theta_g\colon T(Y_{g^{-1}})\to T(Y_g)\) is an isomorphism. In the following denote by \(\partial\theta_g\colon T^*(Y_{g^{-1}})\to T^*(Y_{g})\) the codifferential \(\partial\theta_g=(d\theta^t)^{-1}\). Consider the partial \(\Gamma\)-action on \(T^*Y\) with \((T^*Y)_g=T^*(Y_g)\) and \(\tilde\theta_g\colon T^*Y_{g^{-1}}\to T^*Y_g\) given by
\begin{equation*}
	\tilde\theta_g(x,\xi)=\left(\theta_g(x),
	\partial_x\theta_g(\xi)\right).
\end{equation*}
Note that the partial action \(\tilde\theta\) restricts to a partial action on \(S^*Y\) with \((S^*Y)_g=S^*(Y_g)\). 

\paragraph{\textbf{Partial \(\Gamma\)-action on \(C(S^*(\partial Y),\End (L^2(N\partial Y_+)\oplus\C))\) }}
Similarly as above, the partial \(\Gamma\)-action on \(\partial Y\) induces a partial \(\Gamma\)-action on \(T^*(\partial Y)\) and \(S^*(\partial Y)\). Consider the normal bundle of \(\partial Y\) in \(Y\).
As \(d\theta_g\colon TY_{g^{-1}}\to TY_g\) restricts to an isomorphism \(T(\partial Y_{g^{-1}})\to T(\partial Y_g)\) and the Riemannian metric is \(\Gamma\)-invariant, \(d\theta\) restricts to partial \(\Gamma\)-action on \(N\partial Y\) with \((N\partial Y)_g=N(\partial Y_g)\). Note that the partial action maps the positive subspace \(N_{z}(\partial Y_{g^{-1}})_+\) to \(N_{\theta_g(z)}(\partial Y_g)_+\).
The Riemannian metric induces a measure \(\mu_z\) on every fiber \(N_z(\partial Y)\). 
Consider for \(g\in\Gamma\) and \(z\in \partial Y_g\)
\begin{align*}
	u_g\colon L^2(N_{\theta_g(z)}(\partial Y_{g^{-1}})_+,\mu_{\theta_g(z)})&\to L^2(N_z(\partial Y_g)_+,\mu_z)& \\
	u_g(\varphi)(X)&=
	\varphi(d\theta_{g^{-1}}(X)) & \text{for }X\in N_z(Y_g)_+.
\end{align*}
\begin{remark}\label{rem:ident-with-trivial-bundle-action}
	Under the identification \(\End(L^2(N\partial Y_+)\oplus\C)\) with the trivial bundle with fiber \(\Bounded(L^2(\R_+)\oplus \C)\), see \cref{rem:ident-with-trivial-bundle}, the unitaries \(u_g\) become the identity map \(L^2(\R_+)\to L^2(\R_+)\) as \(d\theta_{g^{-1}}\) is an orientation preserving linear isometry and must therefore map \(\nu\) to itself.
\end{remark}
Then one can define a partial \(\Gamma\)-action on \(C(S^*(\partial Y),\End (L^2(N\partial Y_+)\oplus\C))\) by
\begin{equation*}
	C(S^*(\partial Y),\End (L^2(N\partial Y_+)\oplus\C))_g = C(S^*(\partial Y_g),\End (L^2((N\partial Y_g)_+)\oplus\C))
\end{equation*}
and setting for \(f\in C(S^*(\partial Y_{g^{-1}}),\End (L^2(N\partial Y_{g^{-1}})\oplus\C))\) and \((z,\xi')\in S^*(\partial Y_g)\) 
\begin{equation}\label{eq:boundary-symbol-action}
	\tilde \theta^*_g(f)(z,\xi')= (u_g\oplus\id)\circ f(\tilde \theta_{g^{-1}}(z,\xi'))\circ (u_{g^{-1}}\oplus\id).
\end{equation}
It follows from \cite{RS82}*{p.~171}, see also \cite{BNSS22}*{Proposition 2}, that 
\begin{align}\label{eq:coordinate-changes}
	\sigma_{\interior}((U_g\oplus V_g)\circ T\circ(U_{g^{-1}}\oplus V_g^{-1})&=\tilde\theta_g^*\sigma_{\interior}(T)\\ \sigma_{\boundary}((U_g\oplus V_g)\circ T\circ(U_{g^{-1}}\oplus V_g^{-1})&=\tilde\theta^*_g\sigma_{\boundary}(T)\end{align}
for every \(T\in\overline{\Psi(Y,\partial Y)}\) and \(g\in\Gamma\).

Hence, the partial crossed products  $C(S^*Y)\rtimes\Gamma$ and $C(S^*(\partial Y),\End (L^2(N\partial Y_+)\oplus\C))\rtimes\Gamma$ are defined. Moreover, we can define the partial crossed product $\Sigma \rtimes\Gamma$ by setting
\begin{align*}
	\Sigma_g = \{[D]\in \Sigma\colon \overline{\sigma_{\interior}}([D])\in C(S^*Y_g) \text{ and }\overline{\sigma_{\boundary}}([D])\in C(S^*(\partial Y_g),\End (L^2(N\partial Y_+)\oplus\C))\}
\end{align*}
and \([D]\mapsto [(U_g\oplus V_g)\circ D \circ(U_{g^{-1}}\oplus V_g^{-1}] \), where \([D]\) denotes the equivalence class mod \(\Compact\).

\paragraph{\textbf{$\Gamma$-Boutet de Monvel operators}}
 
The algebraic partial crossed product $\Sigma\rtimes_{alg}\Gamma\subset \Sigma\rtimes\Gamma$ is the subset of  elements with finite supports in $\Gamma$. Define a homomorphism of algebras
 \begin{equation}
 \label{eq:quant3}
 \begin{array}{ccc}
    Q: \Sigma\rtimes_{alg}\Gamma & \longrightarrow&  \Bounded(L^2(Y)\oplus L^2(\partial Y))/\Compact \vspace{2mm} \\
    \{\sigma(D_g)\} &\longmapsto & \sum_{g\in\Gamma} D_g (U_g\oplus V_g) {\;\rm mod \;}\Compact.
 \end{array}
 \end{equation}
 Here, \(\sigma(D_g)\) is contained in \(\Sigma_g\).
 
\begin{definition}
	The \(C^*\)-algebra of \emph{$\Gamma$-Boutet de Monvel operators} associated with the partial \(\Gamma\)-action on \(Y\) is 
	\begin{equation}\label{eq:rep-of-boutet-de-monvel-crossed-prod}
		\nlBM=p^{-1}\left(\overline{Q(\Sigma\rtimes_{alg}\Gamma)}\right) \subset \Bounded(L^2(Y)\oplus L^2(\partial Y)) ,
	\end{equation}
	where $p:\Bounded\to \Bounded/\Compact$ is the natural projection to the Calkin algebra.
\end{definition}
\begin{remark}
	Note that one can identify \(L^2(Y,\vol_Y)\cong L^2(M,\vol_M)\), whereas \(\partial Y\) is a disjoint union of images of \(\partial M\) under the \(\Gamma\)-action. 
\end{remark}

\subsection{Fredholm criterion}
Next, we would like to find criteria when \(A\in\nlBM\) is Fredholm. 
We  make the following assumption.
\begin{assumption}\label{ass:topologically-free}
	The group \(\Gamma\) is amenable and the induced partial \(\Gamma\)-action on \(\Prim(\Sigma)\), for \(\Sigma=\overline{\Psi(Y,\partial Y)}/\Compact\), is topologically free. 
\end{assumption}

\begin{theorem}
	Suppose that \cref{ass:topologically-free} holds. Then the map~\eqref{eq:quant3} extends by continuity to the injective $*$-homomorphism $\overline{Q}:\Sigma\rtimes \Gamma\to \Bounded/\Compact$. In particular, \(A\in\nlBM\) is Fredholm if and only if the interior principal symbol \(\widetilde{ \sigma} _{\interior}(A)\in C(S^*Y)\rtimes\Gamma\) and the boundary principal  symbol \(\widetilde{\sigma}_{\boundary}(A)\in C(S^*(\partial Y),\End (L^2(N\partial Y_+)\oplus\C)) \rtimes\Gamma\) 	are invertible. 
\end{theorem}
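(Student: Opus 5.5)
The plan is to realize \(\overline Q\) as the integrated form of a covariant representation of \((\Sigma,\Gamma,\tilde\theta)\) on the Calkin algebra, and then apply \cref{res:isomorphism-theorem}.

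\textbf{Step 1: the covariant pair.} Let \(\mathcal C=\Bounded(L^2(Y)\oplus L^2(\partial Y))/\Compact\), and fix a faithful nondegenerate representation \(\rho\) of \(\mathcal C\) on a Hilbert space \(\Hils_\rho\); any faithful representation of the Calkin algebra will do. Define
\begin{equation*}
\pi([D])=\rho(p(D)), \qquad u_g=\rho\bigl(p(U_g\oplus V_g)\bigr).
\end{equation*}
Then \(\pi\) is a faithful representation of \(\Sigma\). This holds because \(\Sigma=\overline{\Psi(Y,\partial Y)}/\Compact\) embeds into \(\mathcal C\), since \(\Compact\subseteq\overline{\Psi(Y,\partial Y)}\) by \cref{rem:principal-symbol}. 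Nondegeneracy follows from \(1\in\Sigma\).

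\textbf{Step 2: checking the covariance axioms.} I would verify the four axioms one at a time.
\begin{enumerate}
\item Each \(U_g\oplus V_g\) is a partial isometry whose initial projection is multiplication by \(\chi_{Y_{g^{-1}}}\oplus\chi_{\partial Y_{g^{-1}}}\) and whose final projection is multiplication by \(\chi_{Y_g}\oplus\chi_{\partial Y_g}\). Since \(Y_g\) is clopen, these characteristic functions are smooth, so the projections lie in \(\Psi(Y,\partial Y)\). Their classes are units of the ideals \(\Sigma_{g^{\pm1}}\), so \(\overline{\pi(\Sigma_{g})\Hils_\rho}\) is exactly the range of \(u_gu_g^*\), as required.
\item The relation \(u_g\pi(a)u_{g^{-1}}=\pi(\tilde\theta_g(a))\) is the definition of the action on \(\Sigma\), and it is consistent with symbols by \eqref{eq:coordinate-changes}.
\item For \(\pi(a)(u_gu_h-u_{gh})=0\), note that \(U_gU_h=U_{gh}\) on functions supported in \(Y_{h^{-1}}\cap Y_{h^{-1}g^{-1}}\). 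This follows pointwise from the partial action axioms, and likewise for \(V\).
\item \(u_g^*=u_{g^{-1}}\) holds because \(\theta\) is isometric, so \(U_g^*=U_{g^{-1}}\) exactly.
\end{enumerate}
By construction, \((\pi\times u)\) composed with \(\rho^{-1}\) agrees with \(Q\) on \(\Sigma\rtimes_{alg}\Gamma\). It therefore extends continuously to \(\Sigma\rtimes\Gamma\), giving \(\overline Q\).

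\textbf{Step 3: injectivity.} This is immediate from \cref{res:isomorphism-theorem}, using \cref{ass:topologically-free} (amenability and topological freeness on \(\Prim\Sigma\)) together with the faithfulness of \(\pi\). Thus \(\overline Q\) is an isomorphism onto \(\overline{Q(\Sigma\rtimes_{alg}\Gamma)}=\nlBM/\Compact\). We write \(\widetilde\sigma=\overline Q^{-1}\circ p\).

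\textbf{Step 4: the Fredholm criterion.} Since injective \(*\)-homomorphisms of \(C^*\)-algebras preserve spectra, Atkinson's theorem gives that \(A\) is Fredholm iff \(p(A)\) is invertible in \(\nlBM/\Compact\), iff \(\widetilde\sigma(A)\) is invertible in \(\Sigma\rtimes\Gamma\).

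It remains to split this into interior and boundary components. The map \(\Sigma\to C(S^*Y)\oplus C(S^*\partial Y,\End(\cdot))\) is an injective equivariant homomorphism by \cref{rem:principal-symbol}, and \cref{res:induced-map-crossed-product} gives the induced map on crossed products. To show the induced map \(\Sigma\rtimes\Gamma\to (C(S^*Y)\rtimes\Gamma)\oplus(C(S^*\partial Y,\dots)\rtimes\Gamma)\) is injective, I would argue via the exact sequence of \cref{res:ses-partial-actions-algebras} that kernels of induced maps are \(\ker\rtimes\Gamma\), which here is \(0\). Invertibility is then preserved in both directions by spectral permanence.

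The main obstacle I expect is the invertibility statement for the two components. Injectivity alone gives that the image is a \(C^*\)-subalgebra, hence inverse-closed, which suffices. The real care lies in Step 2(i): checking that the initial and final spaces are precisely \(\overline{\pi(\Sigma_{g^{\pm1}})\Hils_\rho}\), which needs \(\Sigma_g\) to have unit \([\chi_{Y_g}\oplus\chi_{\partial Y_g}]\). That in turn requires confirming that \(\Sigma_g\) as defined is the ideal generated by this projection.
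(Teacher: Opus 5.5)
Your Steps 1--3 are the paper's proof. There too, $Q$ is read as the integrated form of the covariant pair $\pi\colon\Sigma\hookrightarrow\Bounded/\Compact$, $u_g=(U_g\oplus V_g)\bmod\Compact$. It is composed with a faithful (GNS) representation of the Calkin algebra, extended by the universal property and density, and shown injective by \cref{res:isomorphism-theorem}.

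Your check of the covariance axioms is more careful than the paper's, which only records $u_g=u_gu_g^*u_g$. The point you leave open is easily settled. The symbol of $e_g=[\chi_{Y_g}\oplus\chi_{\partial Y_g}]$ is $\chi_{Y_g}$ in the interior and $\chi_{\partial Y_g}(x')$ times the identity on the boundary (note $\partial Y_g=Y_g\cap\partial Y$), so it is central. By injectivity of the symbol map, $e_g$ is a central projection of $\Sigma$, and $\Sigma_g=e_g\Sigma$.

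The step that does not work as written is the splitting into the two components in Step 4. \cref{res:ses-partial-actions-algebras} applied to $I=\ker f$ only identifies the kernel of $A\rtimes\Gamma\to f(A)\rtimes\Gamma$. It says nothing about whether the map $f(A)\rtimes\Gamma\to B\rtimes\Gamma$ induced by an equivariant inclusion $f(A)\subseteq B$ is injective, and for full crossed products this can fail. For example, $C^*(F_2)=\C\rtimes F_2\to C(\partial F_2)\rtimes F_2$ is not injective: the boundary action is amenable, so the target is a reduced crossed product in which the $u_g$ generate $C^*_r(F_2)$.

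So injectivity of $\sigma_\rtimes\colon\Sigma\rtimes\Gamma\to B\rtimes\Gamma$, with $B=C(S^*Y)\oplus C(S^*\partial Y,\End(L^2(N\partial Y_+)\oplus\C))$, must use a hypothesis. Either of the following works.
\begin{itemize}
\item Since $\Gamma$ is amenable, both crossed products are reduced, and the expectations $E(x)=x(e)$ are faithful. Moreover $E_B\circ\sigma_\rtimes=\sigma\circ E_\Sigma$, which you check on $L(\Sigma,\Gamma)$. Hence $\sigma_\rtimes(x)=0$ gives $\sigma(E_\Sigma(x^*x))=0$, so $E_\Sigma(x^*x)=0$ and $x=0$.
\item Take a covariant representation $(\pi_B,u_B)$ of $(B,\Gamma)$ whose integrated form is faithful on $B\rtimes\Gamma$, and compose with $\sigma$. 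Since $\sigma(e_g)$ is the unit of $B_g$, $(\pi_B\circ\sigma,u_B)$ is covariant for $\Sigma$ with $\pi_B\circ\sigma$ faithful. Then \cref{res:isomorphism-theorem} makes $(\pi_B\times u_B)\circ\sigma_\rtimes$ faithful.
\end{itemize}
With this in place, inverse-closedness of $C^*$-subalgebras finishes Step 4 as you say. The paper's own proof does not address this passage at all; it goes directly from injectivity of $\overline{Q}$ to the criterion via Atkinson--Nikol'skii.
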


\begin{proof}

1. Note that $Q$ is associated with a covariant homomorphism (cf.~\cite{Wil07}
and the definition of covariant representation in Section~\ref{sect:GroupoidAlgebra}) of the triple $(\Sigma,\Gamma,\tilde\theta^*)$ in the Calkin algebra $\Bounded/\Compact$. This means that we have an injective  unital $*$-homomorphism $\pi: \Sigma \to \Bounded/\Compact$ (natural injection) and a map $u:\Gamma\to \Bounded/\Compact$, $g\mapsto u_g=(U_g\oplus V_g)\mod\Compact$ and the following properties hold  for all \(g,h\in\Gamma\):
	\begin{enumerate}
		\item \(u_g\) is a partial isometry :  \( u_g=u_gu_g^* u_g\);
		\item \label{item:covariant33}\(u_g\pi(a)u_{g^{-1}}=\pi(\tilde\theta^*_g(a))\) for all \(a\in \Sigma_{g^{-1}}\);
		\item \(\pi(a)(u_gu_h-u_{gh})=0\) for all \(a\in \Sigma_g\cap \Sigma_{gh}\);
		\item \(u_g^*=u_{g^{-1}}\).
	\end{enumerate} 
	
2.  The Gelfand--Naimark--Segal theorem gives  a nondegenerate faithful representation of the Calkin algebra on a Hilbert space $\mathcal{H}$. Denote this representation by $i: \Bounded/\Compact \to \mathcal{B}\mathcal{H}$.  Then the pair $(i\circ\pi,i\circ u)$ is a covariant representation of $(\Sigma,\Gamma,\tilde\theta^*)$   in $\mathcal{H}$. Hence, by the universal property of the crossed product we have $*$-homomorphism:
$$
(i\circ \pi)\times(i\circ u) : \Sigma\rtimes \Gamma\longrightarrow \mathcal{B}\mathcal{H}.
$$
Moreover, the following diagram commutes:
\begin{equation*}
		\begin{tikzcd}
			  \Sigma\rtimes_{alg}\Gamma\arrow[r,""]\arrow[d,"Q"]&  \Sigma\rtimes \Gamma \arrow[d,"(i\circ \pi)\times(i\circ u)"] \\
			  \Bounded/\Compact\arrow[r,"i"] & \mathcal{B}\mathcal{H} .
		\end{tikzcd}
	\end{equation*}
Since $\Sigma\rtimes_{alg}\Gamma\subset  \Sigma\rtimes \Gamma $ is a dense subalgebra, we have
$$
i \left(\overline{Q(\Sigma\rtimes_{alg}\Gamma)}\right) =(i\circ \pi)\times(i\circ u) (\Sigma\rtimes \Gamma).
$$
This implies that $Q$ extends by continuity to the map of the crossed product  
\begin{equation}
\label{eq:mainmap}
 \overline{Q}:\Sigma\rtimes \Gamma\to \Bounded/\Compact,\qquad Q(w)=i^{-1}\left((i\circ \pi)\times(i\circ u)w\right).
\end{equation}

3. Recall from \cref{rem:principal-symbol} that the interior and boundary principal symbol induce an injective \(^*\)-homomorphism
\(Q:\Sigma\to \Bounded/\Compact\).  Hence, $ i \circ Q:\Sigma \to \mathcal{B}\mathcal{H}$ is also injective. 
Under  \cref{ass:topologically-free},  \cref{res:isomorphism-theorem} implies that the representation \((i\circ \pi)\times(i\circ u)\) of \(\Sigma\rtimes\Gamma\) is faithful. 

Hence, \eqref{eq:mainmap} is injective.  This implies the desired Fredholm criterion by the Atkinson-Nikol'skii theorem.
\end{proof}

\begin{example}
	Suppose the original partial action \(\Gamma\curvearrowright M\) is free. Then also the induced partial actions on \(Y\) and \(S^*Y\) and the \(\Gamma\)-invariant subsets \(\partial Y\) and \(S^*\partial Y\) are free. 	
	In \cite{MSS06} the following composition series is considered
	\begin{equation}\label{eq:composition-series}
		0\subseteq \Compact\subseteq \ker(\sigma_{\interior})\subseteq \overline{\Psi(Y,\partial Y)}.
	\end{equation}
	It is shown in \cite{MSS06}*{Theorem~6} that the boundary principal symbol induces an isomorphism
	\begin{equation*}
		\ker(\sigma_{\interior})/\Compact \cong C(S^*\partial Y)\otimes\Compact(L^2(\R_+\oplus\C))
	\end{equation*}
	and in \cite{MSS06}*{Corollary~26} that the interior principal symbol induces an isomorphism
	\begin{equation*}
		\overline{\Psi(Y,\partial Y)}/\ker(\sigma_{\interior}) \cong C(S^*Y/\sim).
	\end{equation*}
	Here, \(\sim\) denotes the equivalence relation with \((x',0,0,1)\sim(x',0,0,-1)\) for \(x'\in \partial Y\) which is obtained due to the transmission condition.	Hence, we see that as sets
	\begin{equation*}
		\Prim(\Sigma)=\Prim(\ker(\sigma_{\interior})/\Compact)\cup \Prim(\overline{\Psi(Y,\partial Y)}/\ker(\sigma_{\interior}))=S^*\partial Y\cup (S^*Y/\sim).
	\end{equation*}
	Note that the composition series consists of \(\Gamma\)-invariant ideals and that \(\sim\) is respected by the induced partial \(\Gamma\)-action on \(S^*Y\). Hence, we see that the \(\Gamma\)-action on \(\Prim\Sigma\) is free and, in particular, \cref{ass:topologically-free} holds.
\end{example}

\subsection{Operators of arbitary order on \(\partial Y\)}
As a byproduct of the construction above, we get in the lower right corner a calculus of operators on \(L^2(\partial Y)\) we denote by \(\overline{\Psi^0_\Gamma(\partial Y)}\). It has a principal symbol map
\begin{align*}
	\sigma_0 \colon \overline{\Psi^0_\Gamma(\partial Y)} \to C(S^*\partial Y)\rtimes\Gamma.
\end{align*}
The \(C^*\)-algebra \(\overline{\Psi_\Gamma^0(\partial Y)}\) is generated by sums
\(\sum_{g\in\Gamma} A_g V_g \) with \(V_g\) defined in \eqref{eq:V_g2} and 
\begin{align*}
	A_g\in \overline{\Psi^0(\partial Y)}_g = \{A\in \overline{ \Psi^0(\partial Y)} \colon \sigma_0(A)|_{S^*\partial Y\setminus S^*\partial Y_g}=0\}.
\end{align*}
Note that these sums are always finite, as there are only finitely many \(\partial Y_g\neq \emptyset\).

Similary, as in \cite{NSS08}*{Section~2.1.3} for honest group actions we can define a calculus of arbitary order on \(\partial Y\) as follows.
\begin{definition}\label{def:smooth-crossed-product}
	Let \(X\) be a smooth compact manifold with a partial action of \(\Gamma\) by diffeomorphisms \(\theta_g\colon X_{g^{-1}}\to X_g\) and assume that there are only finitely many \(X_g\neq \emptyset\). Then denote by \(C^\infty(X)\rtimes \Gamma\)  the subspace of \(C(X)\rtimes \Gamma\subseteq \Bounded(L^2(X))\) of
	\begin{align*}
		\sum_{g\in \Gamma} \pi(f_g) u_g
	\end{align*}
	where \(f_g\in C^\infty(X_g)\), \(\pi\) denotes the representation of \(C(X)\) on \(L^2(X)\) by multiplication and \(u_g\) the shifts induced by the action.
\end{definition} 
It is easy to check that \(C^\infty(X)\rtimes \Gamma\) is a subalgebra of \(C(X)\rtimes\Gamma\).

\begin{example}
	Consider the induced partial action on \(S^*\partial Y\).
	The maps \(d\theta_g^*\colon C(S^*\partial Y_g)\to C(S^*\partial Y_{g^{-1}})\) restrict to maps \(d\theta_g^*\colon C^\infty(S^*\partial Y_g)\to C^\infty(S^*\partial Y_{g^{-1}})\). Therefore, we can define the smooth partial crossed product \(C^\infty(S^*\partial Y)\rtimes \Gamma\).
\end{example}

Note that also that \(V_g\colon L^2(\partial Y)\to L^2(\partial Y)\) can be restricted to a map \(V_g\colon C^\infty(\partial Y)\to C^\infty(\partial Y)\).
\begin{definition}
	Let \(\Psi^m_{\Gamma}(\partial Y)\) denote the subspace of operators on \(C^\infty(\partial Y)\) of the form
	\(
	\sum_{g\in \Gamma} A_g V_g
	\)
	with \(A_g\) belonging to 
	\begin{align*}
		\Psi^m(\partial Y)_g = \{A\in  \Psi^m(\partial Y) \colon \sigma_m(A)|_{S^*\partial Y\setminus S^*\partial Y_g}=0\}.
	\end{align*}
	Define a principal symbol map
	\begin{align*}\tilde\sigma_m\colon \Psi^m_{\Gamma}(\partial Y)&\to C^\infty(S^*\partial Y)\rtimes \Gamma \\
		\sum_{g\in \Gamma} A_g V_g &\mapsto \pi(\sigma_m(A_g))u_g.
	\end{align*}
\end{definition} 
Let \(H^s(\partial Y)\) denote the Sobolev space defined with respect to the Laplace--Beltrami operator \(\Delta_g\) of the Riemannian metric. As the metric is \(\Gamma\)-invariant, the operators \(V_g\) extend to partial isometries on all Sobolev spaces. Hence, \(A\in \Psi^m_\Gamma(\partial Y)\) extends to a continuous operator \(H^s(\partial Y)\to H^{s-m}(\partial Y)\) for every \(s\in\R\). 
Similary to \cite{NSS08}*{Theorem~2.3} the following result holds.
\begin{proposition}\label{res:smooth-calculus}
	The calculus on \(\partial Y\) has the following properties:
	\begin{enumerate}
		\item\label{item:well_defined} The principal symbol map \(\tilde\sigma_m\) is well-defined, i.e. does not depend on the representation of the operator as a sum \(\sum_{g\in \Gamma} A_g V_g\).
		\item\label{item:kernel} If the operator \(A\in \Psi^m_\Gamma(\partial Y)\) satisfies \(\tilde\sigma_m(A)=0\), then \(A\colon H^s(\partial Y)\to H^{s-m}(\partial Y)\) is compact for every \(s\in \R\).
		\item\label{item:product} For \(A\in \Psi_\Gamma^m(\partial Y)\) and \(B\in \Psi_\Gamma^{m'}(\partial Y)\) one has \(AB\in \Psi_\Gamma^{m+m'}(\partial Y)\) and \(\tilde\sigma_{m+m'}(AB)=\tilde\sigma_m(A)\tilde\sigma_{m'}(B)\).
	\end{enumerate}
\end{proposition}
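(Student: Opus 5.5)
The proof follows \cite{NSS08}*{Theorem~2.3}. The basic tool is how a pseudodifferential operator behaves under conjugation by the partial isometries \(V_g\). Since \(\partial Y_g\) is clopen in \(\partial Y\), its characteristic function \(\chi_g\) is smooth. Moreover \(V_g=\chi_g V_g\chi_{g^{-1}}\) and \(V_gV_{g^{-1}}=\chi_g\). For \(A\in\Psi^m(\partial Y)_{g^{-1}}\) we have \(V_g A V_{g^{-1}}=(\theta_{g^{-1}})^*\circ(\chi_{g^{-1}}A\chi_{g^{-1}})\circ\theta_{g^{-1}}^{\,*}\), i.e.\ a diffeomorphic change of variables on the clopen set \(\partial Y_g\). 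Hence it lies in \(\Psi^m(\partial Y)_g\) with principal symbol \(\tilde\theta_g^*\sigma_m(A)\), as in \eqref{eq:coordinate-changes}. Only finitely many \(\partial Y_g\) are nonempty, so all sums are finite.

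For \ref{item:product}, write \(A=\sum A_gV_g\) and \(B=\sum B_hV_h\). Then
\[
AB=\sum_{g,h}A_g (V_gB_hV_{g^{-1}})V_gV_h .
\]
The middle factor is \(V_g\chi_{g^{-1}}B_h\chi_{g^{-1}}V_{g^{-1}}\). By the above, it is a pseudodifferential operator supported symbolically in \(S^*\partial Y_g\cap S^*\partial Y_{gh}\), with symbol \(\tilde\theta^*_g(\chi_{g^{-1}}\sigma(B_h))\). On the range of this operator, \(V_gV_h=V_{gh}\). This follows from axiom (iii) of a partial action: \(V_gV_h=\chi_g\chi_{gh}V_{gh}\). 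So the \(gh\)-term is \(A_g\tilde B_{g,h}V_{gh}\in\Psi^{m+m'}(\partial Y)_{gh}\). Its symbol is exactly the convolution product \(\sigma(A_g)\,\tilde\theta_g(\tilde\theta_{g^{-1}}(\ldots))\) defining multiplication in \(C^\infty(S^*\partial Y)\rtimes\Gamma\) via the covariant pair \((\pi,u)\). This gives \(\tilde\sigma(AB)=\tilde\sigma(A)\tilde\sigma(B)\), provided \(\tilde\sigma\) is well defined.

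For \ref{item:well_defined} and \ref{item:kernel}, the main step, which I expect to be the real obstacle, is to show the following: if \(\sum_g A_gV_g=0\), or merely is compact from \(H^s\) to \(H^{s-m}\), then each \(\sigma_m(A_g)=0\). Equivalently, we need the implication in both directions: \(\sum_g\pi(\sigma_m(A_g))u_g=0\) in \(C(S^*\partial Y)\rtimes\Gamma\) if and only if \(\sum A_gV_g\) is compact. I would first reduce to order \(0\) and \(s=0\). Compose with order reductions \(\Lambda^{s-m}\) and \(\Lambda^{-s}\), which are powers of \(1+\Delta\). They commute with the \(V_g\) up to nothing, because the metric is \(\Gamma\)-invariant and \(\partial Y_g\) is clopen. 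By \ref{item:product} and the symbol calculus, this changes the symbols only by a nonvanishing scalar function.

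For order \(0\), there are two directions. If all \(\sigma_0(A_g)=0\), then each \(A_g\) is compact on \(L^2\), hence so is \(\sum A_gV_g\); this gives \ref{item:kernel}, once well-definedness is known. Conversely, suppose \(\sum A_gV_g\) is compact. I would pass to the Calkin algebra and use the faithful representation of \(C(S^*\partial Y)\rtimes\Gamma\) in \(\Bounded/\Compact\). This is \cref{res:isomorphism-theorem} applied to the boundary action, or the previous theorem restricted to the lower right corner. It gives \(\sum\pi(\sigma_0(A_g))u_g=0\) in the crossed product. Then apply the conditional expectation onto the \(g\)-th Fourier coefficient, which is available in the (reduced \(=\) full, \(\Gamma\) amenable) crossed product. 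This yields \(\sigma_0(A_g)=0\) for each \(g\).

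Faithfulness requires topological freeness. If this is unavailable, I would instead argue locally. Choose \((x,\xi)\) and a sequence \(u_k\) of oscillating wave packets concentrating at \((x,\xi)\), tending weakly to zero. Because the \(\theta_g\) with \(g\ne e\) move points or act by nontrivial isometries, the packets \(V_gu_k\) separate. Then \(\|\sum A_gV_gu_k\|\to0\) forces the individual symbols to vanish where the orbits are distinct. I would try the crossed product argument first.
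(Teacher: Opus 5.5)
Your proposal is correct and follows essentially the paper's route. You reduce to order zero (and \(s=0\)) using powers of \(\Lambda=(I-\Delta)^{1/2}\), which commute exactly with the \(V_g\) because the metric is \(\Gamma\)-invariant, and then invoke the order-zero case: the injectivity of the crossed-product symbol map into the Calkin algebra, which comes from the topological freeness assumption and \cref{res:isomorphism-theorem}.

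Your explicit conjugation formula for \(V_gAV_{g^{-1}}\) and the conditional-expectation step only fill in details the paper leaves implicit. One small correction: the identity \(AB=\sum A_g(V_gB_hV_{g^{-1}})V_gV_h\) holds only up to the terms \(A_gV_gB_h(1-\chi_{g^{-1}})V_h\). These are smoothing operators times \(V_h\), so they do not affect the symbol.
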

\begin{proof}
	Let \(\Lambda=(I-\Delta_g)^{1/2}\), then \(V_g\) commutes with all powers of \(\Lambda\) as the metric was assumend to be \(\Gamma\)-invariant. Hence, for every operator \(A\in \Psi^m_\Gamma(\partial Y)\) one has \(\Lambda^{-m}A\in \Psi^0_\Gamma(\partial Y)\) and \(A\Lambda^{-m}\in \Psi^0_\Gamma(\partial Y)\). Moreover, the principal symbol stays the same under this order reduction, that is, \(\tilde\sigma_{0}(\Lambda^{-m}A)=\tilde\sigma_{0}(A\Lambda^{-m})=\tilde\sigma_m(A)\). This can be used to show the result using that the claimed properties hold true for \(m=s=m'=0\) as shown previously.
\end{proof}

\section{\(K\)-theory of the symbol algebra}\label{sect:Ktheory}
In the following, we show, analogously to the results of \cite{MSS06}, that the \(K\)-theory of the symbol algebra \(\Sigma\rtimes\Gamma\) splits.

Let \(\Sigma_{\boundary}=\Image(\sigma_{\boundary})\subset C(S^*\partial Y, \End(L^2(N^*\partial Y)\oplus\C)\). Then there is a \(\Gamma\)-equivariant commuting diagram
\begin{equation*}
	\begin{tikzcd}
		0\arrow[r,""]&\ker(\sigma_{\boundary})/\Compact\arrow[r,"\iota"]&\Sigma\arrow[r,"\overline{\sigma_{\boundary}}"] &\Sigma_{\boundary}\arrow[r,""]&0\\
		0\arrow[r,""]&C_0(Y^\circ)\arrow[r,""]\arrow[u,"m_1"]&C(Y)\arrow[r,"{f\mapsto f|_{\partial Y}}",swap]\arrow[u,"m"] &C(\partial Y)\arrow[r,""]\arrow[u,"b"]&0
	\end{tikzcd}
\end{equation*}
with \(m_1,m,b\) defined by
\begin{equation}\label{eq:m_0,m,b}
	m_1(f)=\left[\begin{pmatrix*}
		f & 0\\
		0 & 0
	\end{pmatrix*}\right], 
	\quad m(f)=\left[\begin{pmatrix*}
		f & 0\\
		0 & 0
	\end{pmatrix*}\right], 
	\quad b(g)=\begin{pmatrix*}
		g & 0\\
		0 & 0
	\end{pmatrix*}.
\end{equation}
Similarly to \cite{MSS06}*{(10)} there is a corresponding diagram of the partial crossed products of the associated mapping cone sequences with exact rows and columns. 
\begin{equation}\label{eq:ses-mapping-cones}
	\begin{tikzcd}
		&0&0 &0&\\
		0\arrow[r,""]&\ker(\sigma_{\boundary})/\Compact\rtimes\Gamma\arrow[r,""]\arrow[u]&\Sigma\rtimes\Gamma\arrow[r,"\widetilde{\overline{\sigma_{\boundary}}}",swap]\arrow[u] &\Sigma_{\boundary}\rtimes\Gamma\arrow[r,""]\arrow[u]&0\\
		0\arrow[r,""]&Z_{m_1}\rtimes\Gamma\arrow[r,""]\arrow[u,"\widetilde\ev_0"]&Z_{m}\rtimes\Gamma\arrow[r]\arrow[u,"\widetilde\ev_0"] &Z_b\rtimes\Gamma\arrow[r,""]\arrow[u,"\widetilde\ev_0"]&0\\
		0\arrow[r,""]&C_{m_1}\rtimes\Gamma\arrow[r,""]\arrow[u]&C_{m}\rtimes\Gamma\arrow[r]\arrow[u] &C_b\rtimes\Gamma\arrow[r,""]\arrow[u]&0\\
		&0\arrow[u]&0\arrow[u] &0\arrow[u]&.\\
	\end{tikzcd}
\end{equation}
\begin{lemma}\label{res:boundary-cone-trivial}
	One has \(K_i(C_b\rtimes\Gamma)=0\) for \(i=0,1\), and consequently the induced map \(K_i(C_{m_1}\rtimes\Gamma)\to K_i(C_m\rtimes\Gamma)\) is an isomorphism for \(i=0,1\).
\end{lemma}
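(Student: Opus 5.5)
By \cref{res:ses-mapping-cone-partial-action}, \(C_b\rtimes\Gamma\cong C_{\tilde b}\), the mapping cone of the induced map \(\tilde b\colon C(\partial Y)\rtimes\Gamma\to\Sigma_{\boundary}\rtimes\Gamma\). The standard six-term sequence of a mapping cone, \(K_{i}(C_{\tilde b})\to K_i(C(\partial Y)\rtimes\Gamma)\xrightarrow{\tilde b_*}K_i(\Sigma_{\boundary}\rtimes\Gamma)\to K_{i-1}(C_{\tilde b})\to\cdots\), shows that \(K_*(C_b\rtimes\Gamma)=0\) exactly when \(\tilde b_*\) is an isomorphism on \(K_0\) and \(K_1\). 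So I will prove that \(\tilde b\) induces an isomorphism in \(K\)-theory.

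First, I apply \cref{res:ses-partial-actions-algebras} to the split exact sequence \eqref{eq:ses-principal-boundary}. Its ideal \(C(S^*\partial Y)\otimes\mathfrak W_0\) is \(\Gamma\)-invariant, because it is the kernel of the equivariant map \(\Sigma_{\boundary}\to C(\partial Y)\) (evaluation of the Toeplitz symbol at \(\infty\)). The splitting \(b\) is equivariant, so by \cref{res:induced-map-crossed-product} the sequence
\[0\to (C(S^*\partial Y)\otimes\mathfrak W_0)\rtimes\Gamma\to\Sigma_{\boundary}\rtimes\Gamma\to C(\partial Y)\rtimes\Gamma\to0\]
is exact and split by \(\tilde b\). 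Hence \(K_*(\Sigma_{\boundary}\rtimes\Gamma)\cong K_*(C(\partial Y)\rtimes\Gamma)\oplus K_*((C(S^*\partial Y)\otimes\mathfrak W_0)\rtimes\Gamma)\), with \(\tilde b_*\) the inclusion of the first summand. It therefore suffices to show \(K_*((C(S^*\partial Y)\otimes\mathfrak W_0)\rtimes\Gamma)=0\).

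By \cref{rem:ident-with-trivial-bundle-action}, the partial action on this ideal is \(\tilde\theta\otimes\id\), with \(\Gamma\) acting only on \(C(S^*\partial Y)\) and trivially on \(\mathfrak W_0\). Since \(\mathfrak W_0\) is type I, hence nuclear, the maximal tensor product is the minimal one, and \cref{res:partial-tensor-prod} gives
\[(C(S^*\partial Y)\otimes\mathfrak W_0)\rtimes\Gamma\cong (C(S^*\partial Y)\rtimes\Gamma)\otimes\mathfrak W_0.\]
Now \(\mathfrak W_0\) is \(KK\)-contractible: it is the algebra whose \(K\)-triviality is used in \cite{MNS03}, \cite{MSS06}. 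Concretely, \(\mathfrak T_0\) fits into \(0\to\Compact\to\mathfrak T_0\to C_0(\R)\to0\), where the quotient \(C_0(\R)\) is given by the symbols vanishing at \(\infty\). Its boundary map is the index map, an isomorphism \(K_1(C_0(\R))\cong\Z\to K_0(\Compact)\), so \(\mathfrak T_0\) is \(KK\)-equivalent to \(0\). In addition, \(\mathfrak W_0\) contains \(\mathfrak T_0\) as a full corner, since \(\mathfrak W_0/\mathfrak T_0\)-type pieces are compact; so \(\mathfrak W_0\) and \(\mathfrak T_0\) are stably isomorphic and \(\mathfrak W_0\) is also \(KK\)-contractible. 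Tensoring with \(C(S^*\partial Y)\rtimes\Gamma\) preserves \(KK\)-contractibility, so the \(K\)-groups vanish. This is the step I expect to need the most care: I must check that the isomorphism of \cref{res:partial-tensor-prod} matches the identification in \cref{rem:ident-with-trivial-bundle-action}, and that \(KK\)-contractibility (rather than just vanishing \(K\)-theory) is available, so that it survives tensoring with a possibly non-bootstrap crossed product.

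Finally, for the consequence, I use the bottom row of \eqref{eq:ses-mapping-cones}, \(0\to C_{m_1}\rtimes\Gamma\to C_m\rtimes\Gamma\to C_b\rtimes\Gamma\to0\). Its six-term exact sequence together with \(K_*(C_b\rtimes\Gamma)=0\) shows that \(K_i(C_{m_1}\rtimes\Gamma)\to K_i(C_m\rtimes\Gamma)\) is an isomorphism for \(i=0,1\).
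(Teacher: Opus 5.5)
Your proposal is correct and follows the paper's proof essentially step by step. You reduce to \(\tilde b_*\) being an isomorphism, pass the split sequence \eqref{eq:ses-principal-boundary} to crossed products using the trivial action on the \(\mathfrak W_0\)-factor and \cref{res:partial-tensor-prod}, and show \(K_*((C(S^*\partial Y)\rtimes\Gamma)\otimes\mathfrak W_0)=0\). The only difference is in that last step: the paper cites \(K_*(\mathfrak W_0)=0\) from \cite{MNS03} and applies the K\"unneth theorem, which is legitimate because \(\mathfrak W_0\) is type I and hence in the bootstrap class. You instead derive the vanishing from the Toeplitz index map and upgrade it to \(KK\)-contractibility, which is equivalent here by the UCT and resolves your worry about a non-bootstrap crossed product in the same way.
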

\begin{proof}
	By the long exact sequence in \(K\)-theory associated with the mapping cone, it suffices to show that the map \(\tilde b_*\colon K_i(C(\partial Y)\rtimes\Gamma)\to K_i(\Sigma_{\boundary}\rtimes\Gamma)\) is an isomorphism for \(i=0,1\).
	
	Recall from \eqref{eq:ses-principal-boundary} that there is a split exact sequence 
	\begin{equation*}
		\begin{tikzcd}
			0\arrow[r,""]&C(S^*\partial Y)\otimes\mathfrak W_0\arrow[r,""]&\Sigma_{\boundary}\arrow[r,""] &C(\partial Y)\arrow[r,""]&0,	
		\end{tikzcd}
	\end{equation*}	
	where the split is given by \(b\). From \eqref{eq:boundary-symbol-action} and \cref{rem:ident-with-trivial-bundle-action}, we see that the \(\Gamma\)-action on the \(\Bounded(L^2(\R_+)\oplus\C)\)-part is actually trivial. Hence, using \cref{res:partial-tensor-prod}, there is a short exact sequence 
	\begin{equation*}
		\begin{tikzcd}
			0\arrow[r,""]&C(S^*\partial Y)\rtimes\Gamma\otimes\mathfrak W_0\arrow[r,""]&\Sigma_{\boundary}\rtimes\Gamma\arrow[r,""] &C(\partial Y)\rtimes\Gamma\arrow[r,""]&0	
		\end{tikzcd}
	\end{equation*}	
	with split \(\tilde b\). Hence, one needs to show that \(K_i(C(S^*(\partial Y))\rtimes\Gamma\otimes\mathfrak W_0)=0\) for \(i=0,1\). In the proof of \cite{MNS03}*{Lemma 7} it is shown that
	\begin{equation}
		\label{eq:frak_W-zero-k-theory}
		K_i(\mathfrak W_0)=0 \quad\text{for }i=0,1.
	\end{equation}
	Consequently, by the K\"unneth Theorem, see for example \cite{Bla98}*{Theorem~23.1.3}, 
	 one deduces \(K_i(C(S^*(\partial Y))\rtimes\Gamma\otimes\mathfrak W_0)=0\).
\end{proof}
Arguing analogously to \cite{MSS06}*{p.~228} we obtain the following splittings.
\begin{lemma}\label{res:split-1}
The long exact sequence in \(K\)-theory of the first column in \eqref{eq:ses-mapping-cones} splits and for \(i=0,1\)
	\begin{equation*}
		K_i(\ker(\sigma_{\boundary})/\Compact\rtimes\Gamma)\cong K_{1-i}(C_{m_1}\rtimes\Gamma)\oplus K_i(C_0(Y^\circ)\rtimes \Gamma).
	\end{equation*}
\end{lemma}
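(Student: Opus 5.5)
The first column of \eqref{eq:ses-mapping-cones} is the short exact sequence
\begin{equation*}
0\to C_{m_1}\rtimes\Gamma\to Z_{m_1}\rtimes\Gamma\xrightarrow{\widetilde\ev_0}\ker(\sigma_{\boundary})/\Compact\rtimes\Gamma\to 0 .
\end{equation*}
The plan is to identify the $K$-theory of the middle term and then exhibit a splitting of the six-term sequence, following \cite{MSS06}*{p.~228}.

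\emph{Step 1: the middle term.} The mapping cylinder $Z_{m_1}$ is $\Gamma$-equivariantly homotopy equivalent to $C_0(Y^\circ)$. The inclusion $f\mapsto (f,\text{const}_{m_1(f)})$ and the projection $(a,b)\mapsto a$ are $\Gamma$-equivariant, and their composites are homotopic to the identities through equivariant maps (contract the cylinder coordinate). By functoriality, \cref{res:induced-map-crossed-product}, and \cref{res:partial-c0}, which turns equivariant homotopies into homotopies of the induced maps on crossed products, we get
\begin{equation*}
K_i(Z_{m_1}\rtimes\Gamma)\cong K_i(C_0(Y^\circ)\rtimes\Gamma).
\end{equation*}
Under this identification $\widetilde\ev_0$ becomes $(\widetilde{m_1})_*$, and \cref{res:ses-mapping-cone-partial-action} lets us regard the column as the mapping cone sequence of $\widetilde{m_1}$. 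The six-term sequence then reads
\begin{equation*}
\cdots\to K_i(C_{m_1}\rtimes\Gamma)\to K_i(C_0(Y^\circ)\rtimes\Gamma)\xrightarrow{(\widetilde{m_1})_*}K_i(\ker(\sigma_{\boundary})/\Compact\rtimes\Gamma)\xrightarrow{\partial}K_{1-i}(C_{m_1}\rtimes\Gamma)\to\cdots
\end{equation*}

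\emph{Step 2: a left inverse of $(\widetilde{m_1})_*$.} It suffices to build a $\Gamma$-equivariant $^*$-homomorphism $r\colon\ker(\sigma_{\boundary})/\Compact\to C_0(Y^\circ)$ with $r\circ m_1=\id$, or at least one inducing a left inverse in $K$-theory. Then $(\widetilde{m_1})_*$ is split injective, the map $K_i(C_{m_1}\rtimes\Gamma)\to K_i(C_0(Y^\circ)\rtimes\Gamma)$ vanishes, and the sequence breaks into split short exact sequences
\begin{equation*}
0\to K_i(C_0(Y^\circ)\rtimes\Gamma)\to K_i(\ker(\sigma_{\boundary})/\Compact\rtimes\Gamma)\to K_{1-i}(C_{m_1}\rtimes\Gamma)\to 0 .
\end{equation*}
This yields exactly the claimed decomposition.

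For the construction of $r$, note that elements of $\ker(\sigma_{\boundary})/\Compact$ have interior symbols in $C(S^*Y)$ vanishing over $\partial Y$ in the appropriate sense, that is, they lie in $C_0(S^*Y^\circ)$. In \cite{MSS06} the left inverse is produced by evaluating the interior symbol along a continuous section, or equivalently by the map induced by $S^*Y^\circ\to Y^\circ$ after stabilising. Concretely, pick a nowhere-vanishing $\Gamma$-equivariant section $s$ of the stable cotangent sphere bundle; equivariance is available because $\Gamma$ acts by isometries. Set $r(a)=\sigma_{\interior}(a)\circ s$. This is equivariant, and $r\circ m_1=\id$ since $\sigma_{\interior}(m_1(f))=\pi^*f$. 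Applying \cref{res:induced-map-crossed-product} gives the splitting on crossed products.

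\emph{Main obstacle.} The difficult step is Step 2. A global nonvanishing section of $S^*Y^\circ$ need not exist, let alone an equivariant one. If the naive section fails, I would instead use the $K$-theoretic argument of \cite{MSS06}. Namely, identify $\ker(\sigma_{\boundary})/\Compact$, via its interior symbol, with an extension of $C_0(S^*Y^\circ)$-type data. Then show that the composite
\begin{equation*}
K_*(C_0(Y^\circ)\rtimes\Gamma)\to K_*(\ker(\sigma_{\boundary})/\Compact\rtimes\Gamma)\to K_*(C_0(S^*Y^\circ)\rtimes\Gamma)\to K_*(C_0(Y^\circ)\rtimes\Gamma)
\end{equation*}
is the identity. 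The last map would be obtained by tensoring with a $\Gamma$-invariant local section over a collar and excising, compatibly with the partial action via \cref{res:ses-partial-actions}.
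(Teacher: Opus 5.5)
Your Step~1, and the logic of Step~2, follow the paper's own proof. The paper identifies $\ker(\sigma_{\boundary})/\Compact$ with $C_0(S^*Y^\circ)$ via $\sigma_{\interior}$, so that $m_1$ becomes $\pi^*$, and then splits $\widetilde{\pi^*}$ by $\widetilde s$ with $s(a)=a\circ\zeta$ for a nowhere-vanishing covector field $\zeta$ on $Y^\circ$.

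Your remarks about $\zeta$ miss in both directions. A non-equivariant $\zeta$ always exists, and this is the one point the paper argues. $Y$ has finitely many components, each compact with nonempty boundary, so every component of $Y^\circ$ is an open manifold (cf.\ \cite{MNS03}*{Proposition~9}). ``Stabilising'' is not an alternative: only an honest section of $S^*Y^\circ\to Y^\circ$ yields a $^*$-homomorphism $C_0(S^*Y^\circ)\to C_0(Y^\circ)$. What \cref{res:induced-map-crossed-product} actually requires is that $s$ be $\Gamma$-equivariant, i.e.\ $\zeta(\theta_g(x))=\partial\theta_g(\zeta(x))$ for $x\in Y^\circ_{g^{-1}}$. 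This does not follow from $\Gamma$ acting by isometries. The paper's proof is silent on this point as well.

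Without an extra hypothesis the gap cannot be closed, because the statement itself can fail. Let $W=\R^2$, let $M$ be the closed unit disk, and let $\Gamma=\Z_2$ act by $x\mapsto -x$. This is an orientation-preserving isometry, \cref{ass:regularity1} holds, and $Y=M$. The action on $Y$ is topologically free, and the induced action on $\Prim(\Sigma)$ is even free. An equivariant $\zeta$ would have to satisfy $\zeta(0)=-\zeta(0)$, which is impossible.

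Worse, the $K$-groups do not fit. By Green--Julg and equivariant Bott periodicity, $K_0(C_0(Y^\circ)\rtimes\Z_2)\cong K^0_{\Z_2}(\C)\cong R(\Z_2)\cong\Z^2$. On the other hand, $\Z_2$ acts freely on $S^*Y^\circ\cong Y^\circ\times S^1$ by $(x,\omega)\mapsto(-x,-\omega)$. The quotient is the total space of a trivial $\R^2$-bundle over a circle, since $-I$ lies in $GL_2^+(\R)$. Hence $K_0(\ker(\sigma_{\boundary})/\Compact\rtimes\Z_2)\cong K_0(C_0(S^*Y^\circ)\rtimes\Z_2)\cong\Z$. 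This group cannot contain $\Z^2$ as a direct summand, so $\widetilde{\pi^*}$ is not injective on $K_0$ and the claimed decomposition fails for $i=0$.

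In particular, your fallback (an invariant section near the collar plus excision) cannot work. The argument you and the paper give does prove the lemma under the additional assumption that $T^*Y^\circ$ admits a nowhere-vanishing $\Gamma$-equivariant section, as $dt$ does in the cylinder examples.
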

\begin{proof}
	The interior principal symbol map induces a \(\Gamma\)-equivariant isomorphism \(\ker(\sigma_{\boundary})/\Compact\to C_0(S^*Y^{\circ})\) under which the map \(m_1\) corresponds to \(\pi^*\), where \(\pi\colon S^*Y^\circ\to Y^\circ\) is the bundle projection. Under this isomorphism the \(K\)-theory long exact sequence of the first row is given by
	\begin{equation*}
		\begin{tikzcd}
			\cdots\arrow[r,""]&K_i(C(Y^\circ)\rtimes\Gamma)\arrow[r,"\widetilde{\pi^*}"]&K_i(C_0(S^*Y^\circ)\rtimes\Gamma)\arrow[r,""] &K_{i+1}(C_{\pi^*}\rtimes\Gamma)\arrow[r,""]&\cdots	.
		\end{tikzcd}
	\end{equation*}	
	Choosing a non-vanishing section of \(T^*Y^\circ\) yields a split \(s\) of \(\pi^*\colon C(Y^\circ)\to C(S^*Y^\circ)\). Note that the existence of a non-vanishing split can be shown as in \cite{MNS03}*{Proposition~9} although \(Y\) is not connected, since \(Y\) has finitely many connected components which all have a nonempty boundary, see the remark after \cref{ass:regularity1}. Then \(\widetilde s\) is a split of \(\widetilde{ \pi^*}\), which induces the claimed split of the \(K\)-theory sequence.
\end{proof}
\begin{corollary}\label{res:split-2}
	\comment{see comment in the previous lemma.} The long exact sequence in \(K\)-theory of the second column in \eqref{eq:ses-mapping-cones} splits and for \(i=0,1\)
	\begin{equation*}
		K_i(\Sigma\rtimes\Gamma)\cong K_{1-i}(C_{m}\rtimes\Gamma)\oplus K_i(C(Y)\rtimes \Gamma).
	\end{equation*}
\end{corollary}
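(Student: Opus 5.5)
The plan is to compare the second column of \eqref{eq:ses-mapping-cones} with the first, using \cref{res:split-1} and \cref{res:boundary-cone-trivial}. The second column is the short exact sequence
\begin{equation*}
0\to C_m\rtimes\Gamma\to Z_m\rtimes\Gamma\xrightarrow{\widetilde\ev_0}\Sigma\rtimes\Gamma\to 0 .
\end{equation*}
The mapping cylinder \(Z_m\) is \(\Gamma\)-equivariantly homotopy equivalent to \(C(Y)\), via the inclusion \(f\mapsto (f,m(f))\) and the projection \(\pr_1\). By functoriality (\cref{res:induced-map-crossed-product}) together with \cref{res:partial-c0}, applied to the homotopy \(t\mapsto b(t\,\cdot)\), this gives \(K_i(Z_m\rtimes\Gamma)\cong K_i(C(Y)\rtimes\Gamma)\). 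Under this identification the map \(K_i(Z_m\rtimes\Gamma)\to K_i(\Sigma\rtimes\Gamma)\) becomes \(\widetilde m_*\). So it suffices to show that \(\widetilde m_*\colon K_i(C(Y)\rtimes\Gamma)\to K_i(\Sigma\rtimes\Gamma)\) is split injective. A split injection makes the boundary maps vanish, so the six-term sequence breaks into split short exact sequences
\begin{equation*}
0\to K_i(C(Y)\rtimes\Gamma)\to K_i(\Sigma\rtimes\Gamma)\to K_{1-i}(C_m\rtimes\Gamma)\to 0,
\end{equation*}
which is the claim.

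To construct a left inverse, I would not look for a \(\Gamma\)-equivariant \(^*\)-homomorphism \(\Sigma\to C(Y)\). Instead I would work at the level of \(K\)-theory by a diagram chase. The first row of \eqref{eq:ses-mapping-cones} and the corresponding row \(0\to C_0(Y^\circ)\rtimes\Gamma\to C(Y)\rtimes\Gamma\to C(\partial Y)\rtimes\Gamma\to0\) from \cref{res:ses-partial-actions} are connected by \(\widetilde m_1,\widetilde m,\widetilde b\). This gives a commuting ladder of six-term exact sequences. The proof of \cref{res:boundary-cone-trivial} shows that \(\widetilde b_*\) is an isomorphism, and \cref{res:split-1} shows that \(\widetilde{m_1}_*\) is split injective, with splitting coming from \(\widetilde s\). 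Let \(\rho_1\) be this left inverse of \(\widetilde{m_1}_*\).

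The task is then to define \(\rho\colon K_i(\Sigma\rtimes\Gamma)\to K_i(C(Y)\rtimes\Gamma)\) with \(\rho\circ\widetilde m_*=\id\). I would do this by a five-lemma type argument. Given \(x\in K_i(\Sigma\rtimes\Gamma)\), push it to \(K_i(\Sigma_{\boundary}\rtimes\Gamma)\) and lift it through \(\widetilde b_*^{-1}\) to \(K_i(C(\partial Y)\rtimes\Gamma)\). One must then correct by elements coming from \(\ker\sigma_{\boundary}/\Compact\) and use \(\rho_1\) there.

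A cleaner route is the one in \cite{MSS06}*{p.~228}, which I would follow. Apply the five lemma to the map of long exact sequences induced by \(m_1\) and \(m\) together with \cref{res:boundary-cone-trivial}, using the identification \(K_*(C_{m_1}\rtimes\Gamma)\cong K_*(C_m\rtimes\Gamma)\). The third column has vanishing \(K\)-theory, so the first and second columns have isomorphic connecting-map data. Since the connecting maps of the first column vanish by \cref{res:split-1}, naturality of the boundary maps (the column inclusion \(C_{m_1}\to C_m\) is an isomorphism on \(K\)-theory) forces the connecting maps of the second column to vanish as well. That gives exactness with zero boundary maps. The splitting then follows because \(K_{1-i}(C_m\rtimes\Gamma)\cong K_{1-i}(C_{m_1}\rtimes\Gamma)\) and the section of the first column maps into the second column.

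I expect the main obstacle to be the naturality bookkeeping. One has to check that the connecting map of the second column composed with the isomorphism \(K_*(C_{m_1}\rtimes\Gamma)\to K_*(C_m\rtimes\Gamma)\) factors through the connecting map of the first column. This holds because the inclusion \(\ker(\sigma_{\boundary})/\Compact\rtimes\Gamma\to\Sigma\rtimes\Gamma\) intertwines the two columns. It also requires checking exactness of the rows of \eqref{eq:ses-mapping-cones}, which comes from \cref{res:ses-partial-actions-algebras} and \cref{res:ses-mapping-cone-partial-action}.
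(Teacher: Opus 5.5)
Your final (``cleaner'') route is the paper's proof: take a section of the connecting map $K_i(\ker(\sigma_{\boundary})/\Compact\rtimes\Gamma)\to K_{1-i}(C_{m_1}\rtimes\Gamma)$ from \cref{res:split-1}, push it into $K_i(\Sigma\rtimes\Gamma)$ via $\tilde\iota_*$ and the isomorphism $K_{1-i}(C_{m_1}\rtimes\Gamma)\cong K_{1-i}(C_m\rtimes\Gamma)$ of \cref{res:boundary-cone-trivial}, and use naturality of the connecting maps. Three small corrections: the connecting maps are split surjective, and what vanishes is $K_*(C_m\rtimes\Gamma)\to K_*(Z_m\rtimes\Gamma)$, which already follows from that split surjectivity, so your separate vanishing step and the detour through a left inverse of $\tilde m_*$ are unnecessary; $K_*(C_b\rtimes\Gamma)$ vanishes, not the $K$-theory of the whole third column; and the contracting homotopy of $Z_m$ must fix the endpoint $1$ (e.g.\ $b\mapsto b(1-t(1-\,\cdot\,))$), because $t\mapsto b(t\,\cdot)$ breaks the gluing condition $m(a)=b(1)$.
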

\begin{proof}
	Consider the following commuting diagram induced from \eqref{eq:ses-mapping-cones}
	 \begin{equation*}
	 	\begin{tikzcd}
	 		\cdots\arrow[r]&K_i(C(Y)\rtimes\Gamma)\arrow[r]&K_i(\Sigma\rtimes\Gamma)\arrow[r,""]&K_{1-i}(C_m\rtimes\Gamma)\arrow[r]&\cdots\\
	 		\cdots\arrow[r]&K_i(C( Y^\circ)\rtimes\Gamma)\arrow[r]\arrow[u]&K_i(\ker(\sigma_{\boundary})/\Compact\rtimes\Gamma)\arrow[r,""]\arrow[u,"\tilde\iota_*"]&K_{1-i}(C_{m_1}\rtimes\Gamma)\arrow[u,"\cong"]\arrow[r]&\cdots.
	 	\end{tikzcd}
	 \end{equation*}
	 As the right vertical map is an isomorphism by \cref{res:boundary-cone-trivial} and the lower sequence splits, one can use a split of \(K_i(\ker(\sigma_{\boundary})/\Compact\rtimes\Gamma)\to K_{1-i}(C_{m_1}\rtimes\Gamma)\) to construct a split for the upper sequence. 
\end{proof}
\section{Homotopy classification}\label{sect:homotopy}
In the following, we will recall the definition of \(\Ell\)-groups as in \cite{Sav05} and prove a homotopy classification of nonlocal Boutet de Monvel operators. 

\subsection{Ell-groups}
For a \(C^*\)-subalgebra \(\mathcal A\subseteq \Bounded(\Hils)\) containing \(\Compact(\Hils)\) together with a \(C^*\)-algebra \(\mathcal A_0\subseteq \mathcal A\), \cite{Sav05} introduces a group \(\Ell(\mathcal A_0,\mathcal A)\) generalizing the classical \(\Ell\)-group for \(\mathcal A\) being the \(C^*\)-algebra of order zero pseudodifferential operators \(\overline{\Psi(M)}\) and \(\mathcal A_0=C(M)\). 
\begin{definition}	
	A \emph{compatible triple} \(\mathbf D =(D,P,Q)\) consists of two projections \(P,Q\in\Mat_n(\mathcal A_0)\) and \(D\in\Mat_n(\mathcal A)\) such that \(D=QDP\). We then consider the operator \(D|_{\Image(P)}\colon\Image(P)\to \Image(Q)\), the so-called  \emph{operator in subspaces determined by the projections \(P\) and \(Q\)}.
	
	Let \(\sigma\colon \mathcal A\to\mathcal A/\Compact(\Hils)\) denote the quotient map. Then \(\mathbf D=(D,P,Q)\) is called \emph{elliptic} if there is some \(D'\in \Mat_n(\mathcal A)\) such that
	\begin{equation}\label{eq:parametrix}
		\sigma(D)\sigma(D')=\sigma(Q) \quad\text{ and }\quad\sigma(D')\sigma(D)=\sigma(P).
	\end{equation}
\end{definition}
Note that for an elliptic \(\mathbf D=(D,P,Q)\), the operator  \(D|_{\Image(P)}\colon\Image(P)\to \Image(Q)\) is Fredholm. 

The direct sum of two elliptic compatible triples \(\mathbf D_0=(D_0,P_{0},Q_{0})\) and \(\mathbf D_1=(D_1,P_1,Q_1)\)
given by \(\mathbf D_0\oplus \mathbf D_1=(D_0\oplus D_1,P_0\oplus P_1,Q_0\oplus Q_1)\) is again an elliptic compatible triple.
\begin{definition}
	Two elliptic compatible triples \(\mathbf D_0=(D_0,P_{0},Q_{0})\) and \(\mathbf D_1=(D_1,P_1,Q_1)\) are \emph{homotopic} if there are continuous maps \([0,1]\to\Mat_n(\mathcal A)\), \(t\mapsto D_t\) and \([0,1]\to\Mat_n(\mathcal A_0)\), \(t\mapsto P_{t}\) and \(t\mapsto Q_t\), such that \((D_t,P_t,Q_t)\) is an elliptic compatible triple for every \(t\in[0,1]\).
	
	An elliptic compatible triple \(\mathbf D =(D,P,Q)\) is \emph{trivial} if \(D\in\Mat_n(\mathcal A_0)\). Two elliptic compatible triples \(\mathbf D_0=(D_0,P_0,Q_0)\) and \(\mathbf D_1=(D_1,P_1,Q_1)\) are \emph{stably homotopic} if there are some trivial elliptic \(\mathbf D',\mathbf D''\) such that \(\mathbf D_1\oplus \mathbf D'\) and \(\mathbf D_2\oplus \mathbf D''\) are homotopic.
	
	Moreover, \(\Ell(\mathcal A_0,\mathcal A)\) is defined as stable homotopy classes of elliptic compatible triples. 
\end{definition}
Then \(\Ell(\mathcal A_0,\mathcal A)\) is a group with respect to the direct sum and \([(D,P_1,P_2)]^{-1}=[(D',P_2,P_1)]\) where \(D'\) denotes a parametrix as in \eqref{eq:parametrix}.
The following result is shown in \cite{Sav05}.
\begin{theorem}[\cite{Sav05}*{Theorem 4}]
The group \(\Ell(\mathcal A_0, \mathcal A)\) is isomorphic to \(K_0(C_{f})\). Here, \(f\colon\mathcal A_0\to \mathcal A/\Compact(\Hils)\) denotes the composition of the inclusion \( \mathcal A_0\hookrightarrow \mathcal A\) with the quotient map \(\sigma\colon \mathcal A\to \mathcal A/\Compact(\Hils)\) and \(C_f\) is the mapping cone see \cref{ex:mapping-cone}.
\end{theorem}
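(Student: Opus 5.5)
The statement is Savin's result: \(\Ell(\mathcal A_0,\mathcal A)\cong K_0(C_f)\), where \(f\colon\mathcal A_0\to\mathcal A/\Compact(\Hils)\). The plan is to build an explicit homomorphism from elliptic compatible triples to \(K_0(C_f)\), show it is well defined on stable homotopy classes, and then prove bijectivity using the standard description of \(K_0\) of a mapping cone by relative triples. Throughout, write \(B=\mathcal A/\Compact(\Hils)\).

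\textbf{Step 1 (relative cycles).} I would use the model of \(K_0(C_f)\) as equivalence classes of triples \((p,q,u)\). Here \(p,q\in\Mat_n(\mathcal A_0)\) are projections and \(u\in\Mat_n(B)\) is a partial isometry with \(u^*u=f(p)\) and \(uu^*=f(q)\). Equivalently, \(u\) is an invertible element from \(f(p)B^n\) to \(f(q)B^n\). The equivalence is generated by homotopy, direct sum, and adding degenerate cycles with \(u\in f(\Mat_n(\mathcal A_0))\). This is the usual relative \(K\)-group description \(K_0(C_f)\cong K_0(\mathcal A_0,B;f)\). To connect with the definition via \(C_f\), I would use the exact sequence
\[
K_1(B)\to K_0(C_f)\to K_0(\mathcal A_0)\to K_0(B)
\]
together with the five lemma. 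Alternatively, I would write down directly the projection in \(\Mat_{2n}(\widetilde{C_f})\) obtained by rotating \(\operatorname{diag}(f(p),0)\) to \(\operatorname{diag}(0,f(q))\) along \((0,1]\) via \(u\); this is the clutching construction.

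\textbf{Step 2 (the map).} Given an elliptic compatible triple \((D,P,Q)\), set \(p=P\) and \(q=Q\). Take the invertible \(\sigma(D)\colon f(P)B^n\to f(Q)B^n\), whose inverse is \(\sigma(D')\) by \eqref{eq:parametrix}, and pass to its polar part \(u=\sigma(D)(\sigma(D)^*\sigma(D))^{-1/2}\), with the inverse square root taken in the corner \(f(P)\Mat_n(B)f(P)\). Then:
\begin{itemize}
\item homotopies of triples give homotopies of cycles, since the polar part depends continuously on \(D\), and continuous paths \(P_t,Q_t\) give continuous \(f(P_t),f(Q_t)\);
\item direct sums correspond;
\item trivial triples, where \(D\in\Mat_n(\mathcal A_0)\), give degenerate cycles, because \(\sigma(D)=f(D)\) and the polar part stays in \(f(\Mat_n(\mathcal A_0))\).
\end{itemize}
Hence the map descends to a homomorphism \(\Ell(\mathcal A_0,\mathcal A)\to K_0(C_f)\).

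\textbf{Step 3 (bijectivity).} For surjectivity, given a cycle \((p,q,u)\), lift \(u\) to some \(D_0\in\Mat_n(\mathcal A)\) and set \(D=QD_0P\). Since \(u=f(q)uf(p)\), the operator \(D\) still lifts \(u\), and \((D,P,Q)\) is elliptic with parametrix obtained from a lift of \(u^*\). Injectivity is the main obstacle. A homotopy of cycles lifts to a homotopy of triples, because homotopies of projections in \(\Mat_n(\mathcal A_0)\) are already at the level of \(\mathcal A_0\), and a path in \(B\) lifts to a continuous path in \(\mathcal A\) by the Bartle--Graves continuous selection, or by lifting piecewise-linear approximations. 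The harder point is matching degenerate cycles with trivial triples up to homotopy. Here I would use that any elliptic triple whose symbol equals \(f(a)\) with \(a\in\Mat_n(\mathcal A_0)\) is homotopic, via the linear path \(D_t=Q(tD+(1-t)a)P\), to the trivial triple \((QaP,P,Q)\). Ellipticity is preserved along this path because the symbol is constant in \(t\). A final stabilization argument, replacing \(p\) by \(p\oplus 1\) to absorb the mapping-cone normalization, would finish the proof.
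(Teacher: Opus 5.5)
Your route is the one the paper has in mind. The paper does not reprove \cite{Sav05}; it identifies \(K_0(C_f)\) with the relative group \(K_0(f)\) and sends \([(D,P,Q)]\mapsto[(P,Q,\sigma(D))]\). There a relative cycle \((p,q,x)\) only needs a pseudo-inverse \(y\) with \(xy=f(q)\) and \(yx=f(p)\), so \eqref{eq:parametrix} is literally the cycle condition, and your passage to the polar part is a harmless normalization. The lifting arguments in Step~3 are fine.

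The gap is in your notion of \emph{degenerate} cycle. In the relative group that computes \(K_0(C_f)\), the cycles that vanish are the elementary ones \((r,r,f(r))\), or equivalently \((p,q,f(w))\) with \(w\) a partial isometry \emph{in} \(\Mat_n(\mathcal A_0)\) from \(p\) to \(q\). A cycle with merely \(u\in f(\Mat_n(\mathcal A_0))\) need not vanish, because \(f\) may kill compact elements of \(\mathcal A_0\). Concretely, take \(\mathcal A_0=\mathcal A=\Bounded(\Hils)\). Every elliptic triple is then trivial, so \(\Ell(\mathcal A_0,\mathcal A)=0\). But \(K_0(C_f)\cong K_1(\Bounded(\Hils)/\Compact(\Hils))\cong\Z\), generated by \((1,1,\sigma(S))\) with \(S\) the unilateral shift. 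So your Step~1 identification is false as stated, and the third bullet of Step~2 (trivial triples go to zero) cannot be proved in this generality. Indeed the statement itself fails there, and so would the paper's formula, since the trivial triple \((S,1,1)\) is sent to a generator.

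The missing ingredient is the hypothesis \(\mathcal A_0\cap\Compact(\Hils)=\{0\}\), i.e.\ that \(f\) is injective. It is implicit in the intended setting and holds in the application in this paper. Under it \(f\) is isometric. So if \(f(a)\) is a partial isometry from \(f(P)\) to \(f(Q)\), then \(w=QaP\) is a genuine partial isometry from \(P\) to \(Q\) in \(\Mat_n(\mathcal A_0)\).

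You must then still show that \((P,Q,f(w))\) vanishes in \(K_0(f)\). Put \(V=\begin{pmatrix} w & 1-Q\\ 1-P & w^*\end{pmatrix}\), a unitary in \(\Mat_{2n}(\mathcal A_0)\) with \(V\operatorname{diag}(P,0)=\operatorname{diag}(w,0)\). Choose, by Whitehead's lemma, a path \(U_t\) of unitaries in \(\Mat_{4n}(\mathcal A_0)\) from \(1\) to \(V\oplus V^*\). With \(\tilde P=\operatorname{diag}(P,0,0,0)\), the cycles \((\tilde P,U_t\tilde PU_t^*,f(U_t\tilde P))\) form a homotopy from the elementary cycle \((\tilde P,\tilde P,f(\tilde P))\) to a stabilization of \((P,Q,f(w))\).

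With this hypothesis and this supplement, your degenerate cycles are exactly those killed in \(K_0(f)\cong K_0(C_f)\), and the rest of your argument goes through.
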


One way to define this isomorphism is to identify \(K_0(C_f)\) with the relative \(K\)-group \(K_0(f)\), see for example \cite{Has24}, see also the proof of \cite{CMR07}*{Thm.~2.38}. For a unital \(^*\)-homomorphism \(f\colon A\to B\) a relative \(K\)-cycle is  a triple \((p,q,x)\) with projections \(p,q\in\Mat_\infty(A)\) and \(x\in\Mat_\infty(B)\) for which there is \(y\in\Mat_\infty(B)\) such that \(xy=f(q)\) and \(yx=f(p)\). The corresponding \(K\)-group is defined by taking equivalence classes with respect to stable homotopy. 
Then the isomorphism \(\Ell(\mathcal A_0,\mathcal A)\to K_0(C_f)\) is given by \([(D,P,Q)]\mapsto[(P,Q,\sigma(D)]\).
\subsection{Boutet de Monvel case}
We consider the \(C^*\)-algebra of nonlocal Boutet de Monvel operators \(\nlBM\) defined in \eqref{eq:rep-of-boutet-de-monvel-crossed-prod}. Let \(\mathcal A_0\) be the subalgebra \((\pi\times(U\oplus V))(C(Y)\rtimes \Gamma\oplus C(\partial Y)\rtimes \Gamma)\).

\begin{lemma}
	Suppose that \cref{ass:topologically-free} holds and the partial action of \(\Gamma\) on \(Y\) is topologically free. Then one has
	\(
	\Ell(\mathcal A_0,\nlBM)\cong K_0(C_{\tilde m_2})
	\)
	where \(\tilde m_2\colon C(Y)\rtimes\Gamma\oplus C(\partial Y)\rtimes\Gamma\to\Sigma\rtimes\Gamma\) is the map induced by 
	\begin{equation*}
		m_2(g,h)=\left[\begin{pmatrix*}
			g& 0\\
			0 &h
		\end{pmatrix*}\right].
	\end{equation*}
\end{lemma}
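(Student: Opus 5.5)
The plan is to apply Savin's theorem \cite{Sav05}*{Theorem 4}, which gives \(\Ell(\mathcal A_0,\nlBM)\cong K_0(C_f)\) for \(f\colon\mathcal A_0\to\nlBM/\Compact\) the inclusion followed by the quotient map. It then remains to identify the homomorphism \(f\) with \(\tilde m_2\) through isomorphisms of domain and codomain, and to use that mapping cones of isomorphic maps are isomorphic.

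\textbf{Step 1: the codomain.} By the Fredholm-criterion theorem (this is where \cref{ass:topologically-free} enters), \(\overline Q\colon\Sigma\rtimes\Gamma\to\Bounded/\Compact\) is injective. By the definition of \(\nlBM\) its image is \(\nlBM/\Compact\). Hence \(\overline Q\colon \Sigma\rtimes\Gamma\to\nlBM/\Compact\) is a \(^*\)-isomorphism.

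\textbf{Step 2: the domain.} The representation \(\pi\times(U\oplus V)\) of \(C(Y)\rtimes\Gamma\oplus C(\partial Y)\rtimes\Gamma\) on \(L^2(Y)\oplus L^2(\partial Y)\) splits as a direct sum of two representations. The first is the covariant representation of \(C(Y)\) by multiplication operators and the shifts \(U_g\); the second uses \(C(\partial Y)\) and \(V_g\). These are covariant representations in the sense of \cref{ex:covariant-rep} and \cref{res:unitary-rep-gives-cov-rep}, applied to the globalization \(V\) with its \(\Gamma\)-invariant measure; one may need to localize to pieces of finite measure, or argue directly. The multiplication representations of \(C(Y)\) on \(L^2(Y)\) and of \(C(\partial Y)\) on \(L^2(\partial Y)\) are faithful. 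By the hypothesis that the partial action on \(Y\) is topologically free, \(\Gamma\) is amenable, and \cref{res:isomorphism-theorem}, the first summand is faithful on \(C(Y)\rtimes\Gamma\).

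For the second summand we need topological freeness of the action on \(\partial Y\). I would deduce it from topological freeness on \(Y\), using that \(\theta_g\) is an isometry. An isometry fixing an open subset of \(\partial Y\) and mapping the inward normal to itself (\cref{rem:ident-with-trivial-bundle-action}) fixes a collar neighbourhood, hence an open subset of \(Y\). Therefore \(\mathcal A_0\cong C(Y)\rtimes\Gamma\oplus C(\partial Y)\rtimes\Gamma\).

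\textbf{Step 3: intertwining.} Check that \(\overline Q\circ\tilde m_2=\sigma\circ(\pi\times(U\oplus V))\) on the algebraic crossed product, with \(f_g\in C(Y_g)\) and \(h_g\in C(\partial Y_g)\). Both sides send \((f_g\delta_g,h_g\delta_g)\) to \(\bigl[\operatorname{diag}(f_g,h_g)(U_g\oplus V_g)\bigr]\) modulo \(\Compact\). Here one uses that \(m_2(f_g,h_g)\in\Sigma_g\), which holds because \(Y_g\) and \(\partial Y_g\) are clopen. Continuity then extends the identity to the full crossed products.

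\textbf{Conclusion and main obstacle.} Under the isomorphisms of Steps 1 and 2, \(f\) corresponds to \(\tilde m_2\), so \(C_f\cong C_{\tilde m_2}\) and \(K_0(C_f)\cong K_0(C_{\tilde m_2})\). I expect the main obstacle to be Step 2: justifying faithfulness of the concrete representation of the boundary crossed product, and checking that \(\mathcal A_0\) is closed. The latter follows once the representation is shown to be injective, since injective \(^*\)-homomorphisms have closed range.
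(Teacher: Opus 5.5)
Your proof is correct and has the same skeleton as the paper's. Both use Savin's theorem and the commuting square with \(\pi\times(U\oplus V)\) on the left and \(\overline Q\) on the right. Both show the two vertical maps are isomorphisms via \cref{res:isomorphism-theorem}, and then conclude \(C_f\cong C_{\tilde m_2}\).

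The one point where you genuinely differ is how you get topological freeness of the partial action on \(\partial Y\).

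\textbf{The paper's route.} It obtains this from \cref{ass:topologically-free}. By the composition series \eqref{eq:composition-series}, \(S^*\partial Y=\Prim(\ker(\sigma_{\interior})/\Compact)\) is an open \(\Gamma\)-invariant subset of \(\Prim(\Sigma)\). Freeness therefore passes to \(S^*\partial Y\), and then down to \(\partial Y\).

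\textbf{Your route.} You derive it from freeness on \(Y\) by a geometric collar argument. An isometry fixing an open piece of \(\partial Y\) is the identity on \(T_z\partial Y\) and sends the inward normal to the inward normal, so its differential is the identity there. It therefore fixes the normal-geodesic collar, which lies in \(Y_{g^{-1}}\) because that set is clopen.

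\textbf{Comparison.} Your argument is valid and more elementary: it uses only that the action is isometric and that the \(Y_g\) are clopen. The paper's route shows that the boundary needs nothing beyond the assumption on \(\Prim(\Sigma)\). The same open-ideal argument applied to \(\ker(\sigma_{\boundary})/\Compact\cong C_0(S^*Y^\circ)\) would even give freeness on \(Y\), so the extra hypothesis in the lemma is not really needed for either summand.

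\textbf{A detail to spell out.} Topological freeness is stated for finite unions of fixed-point sets, while your collar argument needs a single \(g\) fixing an open subset of \(\partial Y\), so that \(d\theta_g\) is the identity on \(T_z\partial Y\). The reduction is routine. Each fixed-point set is closed in \(\partial Y\), because \(\partial Y_{g^{-1}}\) is clopen, and a finite union of closed sets with empty interior has empty interior.
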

\begin{proof}
	Consider the commuting diagram
	\begin{equation*}
		\begin{tikzcd}
			C(Y)\rtimes\Gamma\oplus C(\partial Y)\rtimes\Gamma \arrow[d,"\pi\times(U\oplus V)"]\arrow[r,"\tilde m_2"]& \Sigma\rtimes\Gamma\arrow[d,"\rho\times\tilde U"]\\
			\mathcal A_0 \arrow[r,"f"] &\nlBM/\Compact.
		\end{tikzcd}
	\end{equation*}
\cref{res:isomorphism-theorem} in connection with 	\cref{ass:topologically-free} implies that the right map is an isomorphism. 
	To see that the left map is also an isomorphism by \cref{res:isomorphism-theorem}, we claim that \cref{ass:topologically-free} ensures that the partial action of \(\Gamma\) on \(\partial Y\) is topologically free. Namely, one can consider the composition series \eqref{eq:composition-series}. Then \(S^*\partial Y=\Prim(\ker(\sigma_{\interior})/\Compact)\) is an open, \(\Gamma\)-invariant subset of \(\Prim(\Sigma)\). This means that the partial \(\Gamma\)-action on \(S^*\partial Y\) must also be topologically free and likewise on \(\partial Y\).
	Hence, both vertical maps are isomorphisms so that \(
	\Ell(\mathcal A_0,\nlBM)\cong K_0(C_f)\cong K_0(C_{\tilde m_2})
	\).
\end{proof}
Recall that \(\Sigma_{\boundary}=\Image(\sigma_{\boundary})\subset C(S^*\partial Y, \End(L^2(N^*\partial Y)\oplus\C)\) and the definition of the map \(m_1\) from \eqref{eq:m_0,m,b}. Then there is a \(\Gamma\)-equivariant commuting diagram
\begin{equation*}
	\begin{tikzcd}
		0\arrow[r,""]&\ker(\sigma_{\boundary})/\Compact\arrow[r,""]&\Sigma\arrow[r,"\overline{\sigma_{\boundary}}"] &\Sigma_{\boundary}\arrow[r,""]&0\\
		0\arrow[r,""]&C_0(Y^\circ)\arrow[r,""]\arrow[u,"m_1"]&C(Y)\oplus C(\partial Y)\arrow[r,"{(g,h)\mapsto (g|_{\partial Y},h)}",swap]\arrow[u,"m_2"] &C(\partial Y)\oplus C(\partial Y)\arrow[r,""]\arrow[u,"m_3"]&0
	\end{tikzcd}
\end{equation*}
with \(m_3\) given by
\begin{equation*}
 m_3(h_1,h_2)=\begin{pmatrix*}
		h_1 & 0\\
		0 & h_2
	\end{pmatrix*}.
\end{equation*}
Consequently one obtains a commuting diagram
\begin{equation*}
	\begin{tikzcd}
		0\arrow[r,""]&\ker(\sigma_{\boundary})/\Compact\rtimes\Gamma\arrow[r,""]&\Sigma\rtimes\Gamma \arrow[r,"\widetilde{\overline{\sigma_{\boundary}}}"] &\Sigma_{\boundary}\rtimes \Gamma\arrow[r,""]&0\\
		0\arrow[r,""]&C_0(Y^\circ)\rtimes \Gamma \arrow[r,""]\arrow[u,"\tilde m_1"]&C(Y)\rtimes\Gamma\oplus C(\partial Y)\rtimes\Gamma\arrow[r,""]\arrow[u,"\tilde m_2"] &C(\partial Y)\rtimes\Gamma\oplus C(\partial Y)\rtimes\Gamma\arrow[r,""]\arrow[u,"\tilde m_3"]&0.
	\end{tikzcd}
\end{equation*}
By the compatibility of partial actions with the mapping cone construction, see \cref{res:ses-mapping-cone-partial-action}, this yields a short exact sequence
\begin{equation*}
	\begin{tikzcd}
		0\arrow[r,""]&C_{m_1}\rtimes\Gamma\arrow[r,""]&C_{m_2}\rtimes\Gamma \arrow[r,"\tilde\sigma_{\boundary}"] &C_{m_3}\rtimes\Gamma\arrow[r,""]&0	
	\end{tikzcd}
\end{equation*}
Hence, there is a six term exact sequence
\begin{equation}\label{eq:six-term-k-ell}
	\begin{tikzcd}
		K_0(C_{m_1}\rtimes\Gamma)\arrow[r]&\Ell(\mathcal A_0,\nlBM) \arrow[r,""] &K_0(C_{m_3}\rtimes\Gamma)\arrow[d]\\
		K_1(C_{m_3}\rtimes\Gamma)\arrow[u,""]&K_1(C_{m_2}\rtimes\Gamma)\arrow[l,""] &K_1(C_{m_1})\rtimes\Gamma)\arrow[l].
	\end{tikzcd}
\end{equation}
First of all, \(C_{m_1}\) is \(\Gamma\)-equivariantly isomorphic to \(C_0(T^*Y^\circ)\). Hence, we get that \(K_i(C_{m_1}\rtimes\Gamma)\cong K_i(C_0(T^*Y^\circ)\rtimes\Gamma)\) for \(i=0,1\).

Next, we study \(K_i(C_{m_3}\rtimes\Gamma)\) by comparing it to another cone. Recall the map \(b\colon C(\partial Y)\to\Sigma_{\boundary}\) from \eqref{eq:m_0,m,b} induced by the embedding in the left corner.
Then there is the following commuting \(\Gamma\)-equivariant diagram
\begin{equation*}
	\begin{tikzcd}
		0\arrow[r,""]&\Sigma_{\boundary}\arrow[r,"\id"]&\Sigma_{\boundary}\arrow[r,""] &0\arrow[r]&0\\
		0\arrow[r,""]&C(\partial Y)\arrow[r,"{f\mapsto(f,0)}"]\arrow[u,"b"]&C(\partial Y)\oplus C(\partial Y)\arrow[r,"{(f,g)\mapsto g}"]\arrow[u,"m_3"] &C(\partial Y)\arrow[r,""]\arrow[u,""]&0.
	\end{tikzcd}
\end{equation*}
The partial crossed product sequence of the corresponding cones is
\begin{equation}\label{eq:boundary-cones}
	\begin{tikzcd}
		0\arrow[r,""]&C_{b}\rtimes\Gamma\arrow[r,""]&C_{m_3}\rtimes \Gamma\arrow[r,""] &C(\partial Y)\rtimes\Gamma\arrow[r,""]&0.	
	\end{tikzcd}
\end{equation}
Hence, we see from Lemma  \ref{res:boundary-cone-trivial} that \(K_i(C_{m_3}\rtimes\Gamma)\cong K_i(C(\partial Y)\rtimes\Gamma)\) for \(i=0,1\). Consequently, the six term exact sequence \eqref{eq:six-term-k-ell} becomes
\begin{equation}\label{eq:six-term-k-ell2}
	\begin{tikzcd}
		K_0(C_0(T^*Y^\circ)\rtimes\Gamma)\arrow[r]&\Ell(\mathcal A_0,\nlBM) \arrow[r,""] &K_0(C(\partial Y)\rtimes\Gamma)\arrow[d]\\
		K_1(C(\partial Y)\rtimes\Gamma)\arrow[u,""]&K_1(C_{m_2}\rtimes\Gamma)\arrow[l,""] &K_1(C_0(T^*Y^\circ)\rtimes\Gamma)\arrow[l].
	\end{tikzcd}
\end{equation}
\subsection{Construction of a split}
In the following, we show that \eqref{eq:six-term-k-ell2} splits. First, we describe the homomorphism
\begin{equation*}
	\Ell(\mathcal A_0,\nlBM)\cong K_0(C_{\tilde m_2})\to K_0(C_{\tilde m_3})\to K_0\left(C_{C(\partial Y )\times\Gamma\to 0}\right)\cong K_0(C(\partial Y)\rtimes\Gamma).
\end{equation*}
Using the picture of relative \(K\)-groups, the map \(K_0(C_{\tilde m_2})\to K_0\left(C_{C(\partial Y)\times\Gamma\to 0}\right)\)
is given by 
\begin{equation*}
	[P_Y\oplus P_{\partial Y},Q_Y\oplus Q_{\partial Y},\sigma(D)]\to[P_{\partial Y},Q_{\partial Y}, 0],
\end{equation*}
see, for example, the discussion before Theorem~2.2 in \cite{Has24}.
It is followed by the isomorphism \(K_0(C_{ C(\partial Y)\rtimes\Gamma\to 0})\cong K_0(C(\partial Y)\rtimes \Gamma)\) which is given by \([P_{\partial Y},Q_{\partial Y}, 0]\mapsto [P_{\partial Y}]-[Q_{\partial Y}]\), see \cite{Has24}*{Theorem~2.4(i)}

A homomorphism \(s\colon K_0(C(\partial Y)\rtimes \Gamma)\to \Ell(\mathcal A_0,\nlBM)\) which splits the previous map must therefore associate to a projection \(P_{\partial Y}\) over \(C(\partial Y)\rtimes\Gamma\) an elliptic operator of the form \((D,P_Y\oplus P_{\partial Y},Q_Y\oplus 0)\). This is achieved by using a special boundary value problem associated to a projection over \(C(\partial Y)\rtimes\Gamma\) similarly to \cite{BM71}*{(5.10)}, see also \cite{BS21}*{(77)}.
\begin{lemma}\label{res:special-bm-elemnts}
	There are operators \(A,B\in\overline{\Psi(Y,\partial Y)}\) 
	with the following properties
	\begin{enumerate}
		\item\label{item:1-outside-neighbourhood} \(\sigma_{\interior}(B)=(\sigma_{\interior}(A))^{-1}\) and \(\sigma_{\interior}(A)\) is constant \(1\) outside a neighbourhood of the boundary,
		\item\label{item:Gamma-invar} the principal interior and principal boundary symbol of \(A\) and \(B\) are \(\Gamma\)-invariant,
		\item\label{item:princ-bound} the principal boundary symbols satisfy 
		\begin{equation}\label{eq:special-boundary-symbol}
			\sigma_{\boundary}(A)\sigma_{\boundary}(B)=\begin{pmatrix}
				1 & 0\\ 0 & 1
			\end{pmatrix}\quad\text{ and }\quad 
			\sigma_{\boundary}(B)\sigma_{\boundary}(A)=\begin{pmatrix}
				1 & 0\\ 0 & 0
			\end{pmatrix}.
		\end{equation}
	\end{enumerate}	
\end{lemma}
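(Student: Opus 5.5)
We follow Boutet de Monvel's construction \cite{BM71}*{(5.10)} (see also \cite{BS21}*{(77)}). The idea is to build the operators first on a model half-line and then transplant them to a collar of $\partial Y$. The $\Gamma$-invariance in \ref{item:Gamma-invar} should come for free: the partial action consists of isometries, and it acts trivially on the fiber $\Bounded(L^2(\R_+)\oplus\C)$ by \cref{rem:ident-with-trivial-bundle-action}. So the symbols only need to depend on $(x',\xi')$ through metric quantities such as $\abs{\xi'}$ and the normal coordinate.

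\textbf{Model problem.} On $\R_+$ take the order-one symbol $\xi_n - i\abs{\xi'}$ (with $\abs{\xi'}$ smoothed near $0$), normalised to order zero:
\begin{equation*}
a(\xi',\xi_n)=\frac{\xi_n-i\abs{\xi'}}{\xi_n+i\abs{\xi'}} .
\end{equation*}
This symbol has modulus one, tends to $1$ as $\xi_n\to\pm\infty$, and has a rational structure, so it satisfies the transmission condition. For $\abs{\xi'}=1$ the Toeplitz operator $T_a$ on $L^2(\R_+)$ has $T_a$ or $T_{\bar a}$ as a unilateral shift (depending on the sign convention) with one-dimensional cokernel. Assume conventions are arranged so that $T_a$ is surjective with kernel spanned by $e(x_n)=\sqrt{2}\,e^{-x_n}$. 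Then $T_{\bar a}T_a=1-\ket{e}\bra{e}$ and $T_aT_{\bar a}=1$.

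Set
\begin{equation*}
\sigma_{\boundary}(A)=\begin{pmatrix}T_a\\ \bra{e}\end{pmatrix}\colon L^2(\R_+)\to L^2(\R_+)\oplus\C .
\end{equation*}
After padding this to a square matrix by a zero column, it becomes the operator
\begin{equation*}
\begin{pmatrix}T_a&0\\ \bra{e}&0\end{pmatrix}.
\end{equation*}
Take the corresponding
\begin{equation*}
\sigma_{\boundary}(B)=\begin{pmatrix}T_{\bar a}&\ket{e}\\0&0\end{pmatrix}.
\end{equation*}
Then
\begin{equation*}
\sigma_{\boundary}(A)\sigma_{\boundary}(B)=\begin{pmatrix}T_aT_{\bar a}&T_a\ket{e}\\ \bra{e}T_{\bar a}&\braket{e}{e}\end{pmatrix}=\begin{pmatrix}1&0\\0&1\end{pmatrix},
\end{equation*}
using $T_ae=0$, $\bra{e}T_{\bar a}=(T_ae)^*=0$ and $\norm{e}=1$. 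Similarly,
\begin{equation*}
\sigma_{\boundary}(B)\sigma_{\boundary}(A)=\begin{pmatrix}T_{\bar a}T_a+\ket{e}\bra{e}&0\\0&0\end{pmatrix}=\begin{pmatrix}1&0\\0&0\end{pmatrix},
\end{equation*}
which is exactly \eqref{eq:special-boundary-symbol}. Here $\bra{e}$ is a trace symbol and $\ket{e}$ a potential symbol of order zero, each homogeneous after inserting $\abs{\xi'}$-scaling ($e_{\xi'}(x_n)=\sqrt{2\abs{\xi'}}\,e^{-\abs{\xi'}x_n}$). We must still check which of $a$, $\bar a$ gives the surjective Toeplitz operator, and swap them if necessary.

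\textbf{Globalisation.} Choose a collar $\partial Y\times[0,\varepsilon)$ given by the inward normal geodesic flow. Since $\Gamma$ acts isometrically, since $\partial Y$ is invariant (\cref{res:boundary-invariant}), and since the $Y_g$ are clopen, this collar is $\Gamma$-equivariant.

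Take $P$ with principal symbol
\begin{equation*}
p(x,\xi)=\chi(x_n)\,a(\xi',\xi_n)+(1-\chi(x_n)),
\end{equation*}
where $\chi$ is a cutoff near the boundary and $\abs{\xi'}$ is measured in the induced boundary metric. This $p$ is not automatically invertible on all of $S^*Y$, because the convex combination may vanish. To fix this, replace $a$ by $\exp(\chi(x_n)\log a)$. This is possible since $a\to1$ at $\xi'=0$ directions, and $\xi\mapsto a$ on the sphere avoids $-1$ except at $\xi'$-small points, where we use a homotopy through $a_t=\frac{\xi_n-it\abs{\xi'}}{\xi_n+it\abs{\xi'}}$. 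This yields an invertible, $\Gamma$-invariant symbol that equals $1$ away from the boundary, giving \ref{item:1-outside-neighbourhood}.

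Then define
\begin{equation*}
A=\begin{pmatrix}P_+&0\\T&0\end{pmatrix},\qquad B=\begin{pmatrix}P'_+&K\\0&0\end{pmatrix},
\end{equation*}
where $P'$ has symbol $p^{-1}$, and $T$, $K$ are quantisations of $\bra{e}$, $\ket{e}$ using the normal variable. The boundary symbols at $x_n=0$ are those of the model, and all the symbols depend only on $\Gamma$-invariant metric data, giving \ref{item:Gamma-invar}. Finally, $\sigma_{\interior}(B)=\sigma_{\interior}(A)^{-1}$ holds by construction.

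\textbf{Main obstacle.} We expect the hardest step to be making the interior symbol globally invertible while it equals $1$ off the collar and restricts to $a$ at $x_n=0$, together with checking the transmission condition for the interpolated symbol. The first thing to try is to show that the loop $\xi_n\mapsto a$ on each fiber is contractible within symbols satisfying the transmission condition. The fiber sphere over an interior point is simply connected when $\dim Y\ge 3$; in dimension two, note that $a$ has winding zero over the whole cosphere circle.
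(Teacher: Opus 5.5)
Your boundary-symbol algebra is correct and agrees with the paper's. Your $a$ equals $-\tilde a^0$ in the paper's notation. Its Wiener--Hopf operator on $\overline{H_+}$ is minus the backward shift, with kernel spanned by the Fourier transform of $e^{-x_n}$, so no swap is needed. Your $\bra{e}$ and $\ket{e}$ are the paper's $t^0$ and $k^0$ up to normalisation.

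The genuine gap is the step ``take $P$ with principal symbol $p$''. Near the pole $\xi'=0$, $\xi_n=1$ of the fibre sphere one has $a=1-2i\abs{\xi'}+O(\abs{\xi'}^2)$. So $a$ is continuous on $S^*Y$ but not smooth there. By homogeneity, any degree-zero symbol whose Toeplitz symbol at $x_n=0$ is $\xi_n\mapsto(\xi_n-i)/(\xi_n+i)$ for all unit $\xi'$ must coincide with $a$. Hence no classical operator with the transmission condition has this interior symbol.
\begin{itemize}
\item Smoothing $\abs{\xi'}$ near $\xi'=0$ does not change the principal part. The smoothed function also violates the $S^0$ estimates: for fixed $\xi'$ its second $\xi'$-derivatives in general decay only like $\abs{\xi_n}^{-1}$.
\item The transmission condition involves $\xi$-derivatives at $(0,\pm1)$, so it is not even defined for $a$. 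The ``rational structure'' remark does not settle it.
\end{itemize}

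So your $A$ and $B$ do not exist in $\Psi(Y,\partial Y)$, and you need an argument that puts operators with exactly these symbols in the closure. This is the real content of the paper's proof. Take $\varphi\in\Schwartz(\R)$ with $\varphi(0)=1$ and $\supp\mathcal F^{-1}\varphi\subseteq(-\infty,0]$, and replace $\langle\xi'\rangle$ by $\varphi(\xi_n/(k\langle\xi'\rangle))\langle\xi'\rangle$. This gives genuine classical order-zero symbols $\tilde a_k$ with the transmission condition \cite{Gru96}*{Theorem~2.5.2}; the decay of $\varphi$ makes the principal part flat at $\xi'=0$. The paper then checks two convergences:
\begin{itemize}
\item the interior principal symbols converge uniformly on $S^*Y$;
\item the Wiener--Hopf boundary symbols converge in norm.
\end{itemize}
Since the range of the ${}^*$-homomorphism $(\sigma_{\interior},\widehat{\sigma_{\boundary}})$ on $\overline{\Psi(Y,\partial Y)}$ is closed, the limits are attained by some $A$ and $B$.

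An alternative route uses \cite{MSS06}. Realise the continuous interior symbol via $\overline{\Psi(Y,\partial Y)}/\ker\sigma_{\interior}\cong C(S^*Y/\sim)$. The resulting boundary symbol then differs from the target by a $C(S^*\partial Y)\otimes\Compact(L^2(\R_+)\oplus\C)$-valued term. Remove that term with an element of $\ker\sigma_{\interior}$.

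Your interpolation argument is also wrong in its details, though it can be fixed.
\begin{itemize}
\item On the fibre sphere, $a=-1$ exactly on the equator $\xi_n=0$, and $a=1$ at $\xi'=0$. Your description of where $a$ meets $-1$ is therefore reversed, and the principal branch of $\log a$ jumps along the equator.
\item The homotopy $a_t$ is not continuous at $t=0$: it equals $-1$ on the equator for every $t>0$.
\end{itemize}
What works is to write $a=e^{-2i\alpha}$ with $\alpha=\arg(\xi_n+i\abs{\xi'})\in[0,\pi]$, which is continuous on each fibre. Then $e^{-2i\chi(x_n)\alpha}$ is invertible, equals $a$ where $\chi=1$, and equals $1$ where $\chi=0$. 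No transmission condition is required for $x_n>0$, so the contractibility question in your last paragraph is not the real obstacle. This interpolated symbol is the counterpart of the paper's $\tilde a_k^{\chi_1(x_n)}$ in the limit. Since it is only continuous at the poles, it must also be obtained in the closure, by the same approximation as above.
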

\begin{proof}
We follow the construction in \cite{BM71}*{Example on p.~35f}. Let \(U\) be a closed neighbourhood of the boundary in \(Y\) which is isometric to \(\partial Y\times[0,1]\) and denote the corresponding coordinates by \((x',x_n)\). Let \(\varphi\in\Schwartz(\R)\) with \(\supp\mathcal F^{-1}(\varphi)\subseteq(-\infty,0]\) and \(\varphi(0)=1\). For \(k\in\N\) with \(k>2\norm{\varphi'}_\infty\) consider the following symbol
\begin{equation*}
	\tilde a_k(x',x_n,\xi',\xi_n)=\frac{\varphi\left(\frac{\xi_n}{k\langle\xi'\rangle}\right)\langle \xi'\rangle+i\xi_n}{\varphi\left(\frac{\xi_n}{k\langle\xi'\rangle}\right)\langle \xi'\rangle-i\xi_n}.
\end{equation*}
Then by \cite{Gru96}*{Theorem~2.5.2} \(\tilde a_k\) is a pseudodifferential symbol of order zero with the transmission condition. 
We choose a cutoff function \(\chi_1\in C^\infty(\R)\) with \(\chi_1(x_n)=1\) for \(x_n\leq \tfrac{1}{3}\) and \(\chi_1(x_n)=0\) for \(x_n\geq \tfrac{2}{3}\) to set 
\begin{eqnarray*}
a_k(x',x_n,\xi',\xi_n)=(\tilde a_k(x',x_n,\xi',\xi_n))^{\chi_1(x_n)}
\end{eqnarray*}
and extend \(a_k\) by \(1\) to a symbol defined an all of \(Y\). Its principal symbol is given on \(U\) by
\begin{equation*}
	a^0_k(x',x_n,\xi',\xi_n)=\left(\frac{\varphi\left(\frac{\xi_n}{k\norm{\xi'}}\right)\norm{\xi'}+i\xi_n}{\varphi\left(\frac{\xi_n}{k\norm{\xi'}}\right)\norm{\xi'}-i\xi_n}\right)^{\chi_1(x_n)}.
\end{equation*}

Furthermore, consider the trace symbol 
\begin{equation}
 t(x',\xi',\xi_n)=\frac{\sqrt{\langle\xi'\rangle}}{\langle\xi'\rangle-i\xi_n}.
\end{equation} The principal operator-valued symbol of  \(t(x',\xi',D_n)\) in the sense of Section \ref{section:BoutetdeMonvelCalculus} is given by \(t^0(x',\xi',D_n)\), where 
\begin{equation*}
t^0(x',\xi',\xi_n)=\frac{\sqrt{\norm{\xi'}}}{\norm{\xi'}-i\xi_n}.
\end{equation*}
Hence, we see that
\begin{equation*}
	s_k=\left(a^0_k,(x',\xi')\mapsto \begin{pmatrix}
		a^0_k(x',0,\xi',D_n)& 0 \\ t^0(x',\xi',D_n))& 0
	\end{pmatrix} \right)\in\Image(\sigma_{\interior},\widehat{\sigma_{\boundary}}).
\end{equation*}
We claim that \((s_k)_{k\in\N}\) converges in \(C(S^*Y)\oplus C(S^*\partial Y,\Bounded(\overline{H_+}\oplus\C))\) to
\begin{equation*}
	s=\left(a^0,(x',\xi')\mapsto \begin{pmatrix}
		a^0(x',0,\xi',D_n)& 0 \\ t^0(x',\xi',D_n)& 0
	\end{pmatrix} \right),
\end{equation*}
where \(a^0=\chi_1(x_n)\tilde a^0(x',x_n,\xi',\xi_n)+(1-\chi_1)(x_n)\) with
\begin{equation*}
	\tilde a^0(x',x_n,\xi',\xi_n)=\frac{\norm{\xi'}+i\xi_n}{\norm{\xi'}-i\xi_n}.
\end{equation*}
Consider first \(C(S^*Y)\) and set
\begin{align*}
	f_k(\xi',\xi_n)&=\varphi\left(\frac{\xi_n}{k\norm{\xi'}}\right)\norm{\xi'}+i\xi_n, &f(\xi',\xi_n)&=\norm{\xi'}+i\xi_n,\\ g_k(\xi',\xi_n)&=\varphi\left(\frac{\xi_n}{k\norm{\xi'}}\right)\norm{\xi'}-i\xi_n, &g(\xi',\xi_n)&=\norm{\xi'}-i\xi_n.
\end{align*}
Then \(f_k\) converges uniformly to \(f\) on \(S^{n-1}\) as for all \(\xi\in S^{n-1}\)
\begin{align*}
	\abs{f_k(\xi)-f(\xi)}=\norm{\xi'}\left(\varphi\left(\frac{\xi_n}{k\norm{\xi'}}\right)-\varphi(0)\right)\leq \norm{\varphi'}_\infty\frac{\abs{\xi_n}}{k}\leq \norm{\varphi'}_\infty\frac{1}{k}.
\end{align*}
Similarly, \(g_k\) converges uniformly to \(g\) on \(S^{n-1}\). As \(\varphi\) is continuous, there is \(k_0\in\N\) such that \(\abs{\varphi(t)}\geq\tfrac{1}{\sqrt 3}\) for all \(\abs{t}\leq\tfrac1{\sqrt{3}k_0}\). Suppose that \(\xi\in S^{n-1}\). If \(\abs{\xi_n}\geq \tfrac{1}{2}\), clearly \(\abs{g_k(\xi',\xi_n)}\geq \tfrac{1}{2}\) holds. If \(\abs{\xi_n}\leq \tfrac{1}{2}\), one has \(\norm{\xi'}^2\geq\tfrac34\), so that for \(k\geq k_0\) also
\begin{align*}
	\abs{g_k(\xi)}^2\geq\varphi^2\left(\frac{\xi_n}{k\norm{\xi'}}\right)\norm{\xi'}^2\geq \frac 14.
\end{align*}
It follows that \(\tfrac{f_k}{g_k}\to \tfrac{f}{g}\) uniformly on \(S^{n-1}\).

Now, we consider \(C(S^*\partial Y,\Bounded(\overline{H_+}\oplus\C))\). Note that for all \((x',\xi')\in S^*\partial Y\), the bounded operator on \(\overline{H_+}\) corresponding to \(a_k^0\) and \(a^0\) are given by the symbols
\begin{equation*}
	\psi_k(\xi_n)= \frac{\varphi\left(\frac{\xi_n}{k}\right)+i\xi_n}{\varphi\left(\frac{\xi_n}{k}\right)-i\xi_n} \quad\text{ and }\quad \psi(\xi_n)= \frac{1+i\xi_n}{1-i\xi_n}.
\end{equation*}
As they act as multipliers, it suffices to show that \(\psi_k\) converges uniformly to \(\psi\) on \(\R\). For the following estimate note that for \(z,w\in\C\)
\begin{equation*}
	\left|\frac{z}{\overline z}-\frac{w}{\overline w}\right|=\frac{2\abs{\Im(z\overline w)}}{\abs{z}\abs{w}}.
\end{equation*}
We  use a similar argument as above to see that there is a \(k_0\in\N\) such that for all \(k\geq k_0\) and all \(\xi_n\in\R\) one has \(\varphi^2(\frac{\xi_n} k)+\xi_n^2\geq \frac{1}{4}\) and obtain 
\begin{align*}
	\abs{\psi_k(\xi_n)-\psi(\xi_n)}=\frac{2\abs{\xi_n}\abs{\varphi\left(\frac{\xi_n}{k}\right)-\varphi(0)}}{\sqrt{\varphi^2\left(\frac{\xi_n}{k}\right)+\xi_n^2}\cdot \sqrt{1+\xi_n^2}}\leq \frac{\norm{\varphi'}_\infty}{k}.
\end{align*}
 As the image of a \(^*\)-homomorphism is closed, \(s\) is contained in the image of \((\sigma_{\interior},\widehat{\sigma_{\boundary}})\). Furthermore, \((\sigma_{\interior},\widehat{\sigma_{\boundary}})\colon \overline{\Psi(Y,\partial Y)}/\Compact \to C(S^*Y)\oplus C(S^*\partial Y,\Bounded(\overline{H_+}\oplus\C))\) is an injective \(^*\)-homomorphism, so there exists \(A\in\overline{\Psi(Y,\partial Y)}\) with \((\sigma_{\interior},\widehat \sigma_{\boundary})(A)=s\).

Similarly as above, one can show the existence of \(B\in\overline{\Psi(Y,\partial Y)}\) 
with 
\begin{equation*}
	(\sigma_{\interior},\widehat {\sigma_{\boundary}})(B)=\left((a^0)^{-1}|_{S^*Y},(x',\xi')\mapsto \begin{pmatrix}
		(a^0)^{-1}(x',0,\xi',D_n)& k^0(x',\xi',D_n) \\ 0& 0
	\end{pmatrix} \right),
\end{equation*}
where \(k^0\) is  given by
\begin{equation*}
	k^0(x',\xi',\xi_n)=\frac{\sqrt{\norm{\xi'}}}{2(\norm{\xi'}+i\xi_n)}.
\end{equation*}
Then it is clear that \ref{item:1-outside-neighbourhood} and \ref{item:Gamma-invar} are satisfied as the metric is \(\Gamma\)-invariant. To check the last property \ref{item:princ-bound} we use the following orthogonal basis \(\{\varphi_l\}_{l\in\N_0}\) of \(\overline{H_+} \)
given by
\begin{equation*}
	\varphi_l(\xi_n)=\frac{(1-i\xi_n)^l}{(1+i\xi_n)^{l+1}}.
\end{equation*}
	Then the operators appearing in the Wiener-Hopf operators for \((x',\xi')\in S^*\partial Y\) act as follows on the basis
\begin{align*}
	a^0(x',0,\xi',D_n)\varphi_l&=\begin{cases}
		\varphi_{l-1} &\text{for }l\geq 1,\\
		0 &\text{for }l=0.
	\end{cases} &(a^0)^{-1}(x',0,\xi',D_n)\varphi_l&=\varphi_{l+1}\\
	t^0(x',\xi',D_n)\varphi_l&= \int \varphi_l\overline{\varphi_0}d\xi_n=\begin{cases}
		0 &\text{for }l\geq 1,\\
		2 &\text{for }l=0.
	\end{cases}
	&k^0(x',\xi',D_n)(1)&=\tfrac12\varphi_0.
\end{align*}
Then it is easily checked that \eqref{eq:special-boundary-symbol} holds.
\end{proof}
Suppose now that \(P\in\Mat_n(C(\partial Y)\rtimes\Gamma)\) is a projection. We first lift \(P\) to a neighbourhood of the boundary. Consider as above the closed neighbourhood \(U\) of the boundary in which \(Y\) is isometric to \(\partial Y\times[0,1]\). Let \(\tilde P\in\Mat_n(C(\partial Y)\rtimes\Gamma\otimes C([0,1]))\) denote the projection which is constant \(P\) for every \(t\in[0,1]\). Since the \(\Gamma\)-action in the normal direction is trivial, one can identify \(C(\partial Y)\rtimes\Gamma\otimes C([0,1])\cong C(\partial Y\times[0,1])\rtimes\Gamma\) by \cref{res:partial-tensor-prod}.
Now choose a cutoff function \(\chi_2\in C^\infty([0,1])\) with \(\chi_2(x_n)=1\) for \(t\leq\tfrac{7}{9}\) and \(\chi_2(x_n)=0\) for \(t\geq\tfrac{8}{9}\). Then \(\chi_2\cdot \tilde P\) can be viewed as an element of \(\Mat_n(C_0(\partial Y\times[0,1)\rtimes\Gamma)\subseteq\Mat_n(C(Y)\rtimes\Gamma)\). Denote by \(\Phi\) the homomorphism
\begin{equation*}\Mat_n(C(Y)\rtimes\Gamma)\oplus \Mat_n(C(\partial Y)\rtimes\Gamma)\cong \Mat_n(C(Y)\rtimes\Gamma\oplus C(\partial Y)\rtimes\Gamma)\hookrightarrow \Mat_n(\overline{\Psi(Y,\partial Y)}\rtimes\Gamma).\end{equation*}
Consequently, we can define using the operators \(A,B\) from \cref{res:special-bm-elemnts}
\begin{align}\label{eq:A_P}
	A_P&= {\Phi(\chi_2\tilde P,P)\cdot} A
	+\Phi(\id_n-\chi_2\tilde P,0)\in\Mat_n(\overline{\Psi(Y,\partial Y)}\rtimes\Gamma),\\
	B_P&= \Phi(\chi_2\tilde P,P) \cdot B
	+\Phi(\id_n-\chi_2\tilde P,0)\in\Mat_n(\overline{\Psi(Y,\partial Y)}\rtimes\Gamma).\label{eq:B_P}
\end{align}
\begin{proposition}
	Suppose that \cref{ass:topologically-free} holds and the partial action of \(\Gamma\) on \(Y\) is topologically free. The map \(P\mapsto (B_P,\Phi(\id_n,P),\Phi(\id_n,0_n))\) induces a split \(s\colon K_0(C(\partial Y\rtimes\Gamma))\to\Ell(\mathcal A_0,\nlBM)\).
\end{proposition}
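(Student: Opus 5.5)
Three things have to be checked: the triple \((B_P,\Phi(\id_n,P),\Phi(\id_n,0_n))\) is an elliptic compatible triple; its class depends only on \([P]\in K_0(C(\partial Y)\rtimes\Gamma)\) and is additive in \(P\); and composing with the map \(\Ell(\mathcal A_0,\nlBM)\to K_0(C(\partial Y)\rtimes\Gamma)\) from \eqref{eq:six-term-k-ell2} gives the identity.

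\textbf{Ellipticity.} We work in \(\Sigma\rtimes\Gamma\cong\nlBM/\Compact\), an isomorphism obtained from \cref{res:isomorphism-theorem} under \cref{ass:topologically-free}. By \cref{res:special-bm-elemnts}\ref{item:Gamma-invar} the symbols of \(A\) and \(B\) are \(\Gamma\)-invariant. Hence \([A]\) and \([B]\) commute with the image of \(C(Y)\rtimes\Gamma\oplus C(\partial Y)\rtimes\Gamma\) under \(\tilde m_2\), up to a correction. This correction comes from multiplying by \(\chi_2\tilde P\) in the interior, and it vanishes because \(\chi_2\equiv 1\) wherever \(\sigma_{\interior}(A)\neq 1\) (the support of \(\chi_1\) lies in \(x_n\le 2/3<7/9\)), and because \(\tilde P\) is constant in the normal direction. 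The symbols of multiplication operators are commutative with interior symbols only up to compacts; this is exactly what happens in the quotient, so the commutation holds there.

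First we check compatibility, \(\Phi(\id_n,0)B_P\Phi(\id_n,P)=B_P\) modulo compacts, which holds after a harmless replacement \(B_P\mapsto Q B_P P\). Then we take \(D'=A_P\) (multiplied by the projections) and verify \(\sigma(B_P)\sigma(A_P)=\sigma(\Phi(\id_n,0))\) and \(\sigma(A_P)\sigma(B_P)=\sigma(\Phi(\id_n,P))\) separately on the interior and boundary components of the symbol. In the interior, the boundary corner plays no role. There we get \(\chi_2\tilde P\,b a\,\chi_2\tilde P + (1-\chi_2\tilde P)^2\) with cross terms, and this reduces to \(1\) because \(ab=ba=1\) and \(\tilde P^2=\tilde P\), with \(\chi_2^2\) replaced by \(\chi_2\) on the support of \(a-1\). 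I would instead choose the cutoff so that \(\chi_2\tilde P\) acts idempotently where it matters, or simply take \(\chi_2\) with \(\chi_2^2\) homotopic to \(\chi_2\). On the boundary, where \(\chi_2=1\), we use \eqref{eq:special-boundary-symbol}: \(\sigma_{\boundary}(A)\sigma_{\boundary}(B)=\mathrm{diag}(1,1)\) and \(\sigma_{\boundary}(B)\sigma_{\boundary}(A)=\mathrm{diag}(1,0)\). Combined with the \(P\)-compression, these give exactly the projections \(\mathrm{diag}(1,P)\) and \(\mathrm{diag}(1,0)\), with the roles of \(A\) and \(B\) matching the order in the statement.

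\textbf{Well-definedness and the splitting property.} A homotopy \(P_t\) of projections gives a norm-continuous family \(B_{P_t}\) of elliptic triples. Direct sums are respected, and stabilisation \(P\oplus 0\) only adds the trivial triple \((1,1,1)\). Hence \(s\) is a well-defined homomorphism on the Grothendieck group of projections, namely on \(K_0\). By the description recalled before \cref{res:special-bm-elemnts}, the map to \(K_0(C(\partial Y)\rtimes\Gamma)\) sends \([(D,P_Y\oplus P_{\partial Y},Q_Y\oplus Q_{\partial Y})]\) to \([P_{\partial Y}]-[Q_{\partial Y}]\). For \(s[P]\) this is \([P]-[0]=[P]\), so \(s\) is a right inverse.

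The main obstacle is the symbol computation in the interior near the collar. Making \(\chi_2\tilde P\) interact correctly with \(A\) and \(B\) requires care, since \(\chi_2\tilde P\) is not a projection. I would first try to exploit that \(\sigma_{\interior}(A)=1\) wherever \(\chi_2\neq1\), so that all defects are supported where \(\sigma_{\interior}(A)-1=0\).
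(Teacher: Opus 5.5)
Your overall route is the paper's: you check ellipticity separately on interior and boundary symbols in \(\Sigma\rtimes\Gamma\cong\nlBM/\Compact\), with \(A_P\) as parametrix. Additivity, triviality of \(s(0)\) and homotopy invariance then give a homomorphism on \(K_0\), and the splitting is read off from \([P_{\partial Y}]-[Q_{\partial Y}]\). Your remark that \(B_P\) is compatible only modulo compacts, and that passing to \(QB_PP\) repairs this, is correct. The paper does not check this point. The repair is harmless, because \(xy=q\) and \(yx=p\) give \((qxp)(pyq)=q(xy)(xy)q=q\) and \((pyq)(qxp)=p(yx)(yx)p=p\).

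The incomplete step is the one you call the main obstacle. You state the right fact (\(\chi_2^2=\chi_2\) on the support of \(a-1\)) but do not carry the computation through. The fallbacks you offer instead would not work:
\begin{enumerate}
\item Ellipticity requires the exact relations \(\sigma(D)\sigma(D')=\sigma(Q)\) and \(\sigma(D')\sigma(D)=\sigma(P)\), so a homotopy between \(\chi_2^2\) and \(\chi_2\) is irrelevant.
\item A continuous cutoff falling from \(1\) to \(0\) cannot make \(\chi_2\tilde P\) idempotent.
\end{enumerate}
Neither is needed. Put \(a=\sigma_{\interior}(A)\) and \(b=\sigma_{\interior}(B)=a^{-1}\). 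As \(\Gamma\)-invariant scalar functions they commute with \(\chi_2\) and \(\tilde P\) in \(\Mat_n(C(S^*Y)\rtimes\Gamma)\). Also \(\chi_2\) commutes with \(\tilde P\), since \(\tilde P\) is constant in \(x_n\). Because \(a-1\) and \(b-1\) vanish outside \(\{x_n\le 2/3\}\), where \(\chi_2=1\), you have \((a-1)\chi_2^2=(a-1)\chi_2\). Writing \(\sigma_{\interior}(A_P)=1+(a-1)\chi_2\tilde P\) and \(\sigma_{\interior}(B_P)=1+(b-1)\chi_2\tilde P\), you get
\[
\sigma_{\interior}(A_P)\sigma_{\interior}(B_P)=1+\bigl((a-1)+(b-1)+(a-1)(b-1)\bigr)\chi_2\tilde P=1+(ab-1)\chi_2\tilde P=1,
\]
and likewise in the other order. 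This is the paper's computation; it expands the product and uses \(a=b=1\) on the support of \(\chi_2(1-\chi_2)\).

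Two smaller corrections. First, your claim that \([A]\) and \([B]\) commute with the image of \(\tilde m_2\) is false at the boundary. \(\sigma_{\boundary}(A)\) has the trace entry \(t^0\) in its lower left corner, and \(\sigma_{\boundary}(B)\) has the potential entry \(k^0\) in its upper right corner. So they commute with \(\mathrm{diag}(h_1,h_2)\) only if \(h_1=h_2\). In general they do not commute with \(\mathrm{diag}(\id_n-P,0)\), the boundary symbol of \(\Phi(\id_n-\chi_2\tilde P,0)\). The boundary computation actually uses two facts:
\begin{enumerate}
\item \(\sigma_{\boundary}(A)\) and \(\sigma_{\boundary}(B)\) commute with \(\mathrm{diag}(P,P)\);
\item \(\mathrm{diag}(P,P)\,\mathrm{diag}(\id_n-P,0)=0\).
\end{enumerate}
These kill all cross terms and leave \(\mathrm{diag}(P,P)+\mathrm{diag}(\id_n-P,0)=\mathrm{diag}(\id_n,P)\) and \(\mathrm{diag}(P,0)+\mathrm{diag}(\id_n-P,0)=\mathrm{diag}(\id_n,0)\). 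Second, the trivial triple added under stabilisation is \((\Phi(\id_m,0),\Phi(\id_m,0),\Phi(\id_m,0))\), not \((1,1,1)\).
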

\begin{proof}
	First, we check that \((B_P,\Phi(\id_n,P),\Phi(\id_n,0_n))\) is elliptic. For the interior symbol we compute, using that the interior symbols of \(A,B\) are \(\Gamma\)-invariant,	
	\begin{align*}
		\sigma_{\interior}(A_P)\sigma_{\interior}(B_P)=\,& (\sigma_{\interior}(A)\chi_2\widetilde P+\id_n-\chi_2\widetilde P)(\sigma_{\interior}(B)\chi_2\widetilde P+\id_n-\chi_2\widetilde P)\\
		=\,& 2\chi_2^2\widetilde P +\chi_2(1-\chi_2)\sigma_{\interior}(A)\widetilde P+\chi_2(1-\chi_2)\sigma_{\interior}(B)\widetilde P +\id_n-2\chi_2\widetilde P\\
		=\,& \id_n+2(\chi_2^2-\chi_2+\chi_2(1-\chi_2))\widetilde P=\id_n.
	\end{align*}
	Here, we used that \(\chi_2(1-\chi_2)\sigma_{\interior}(A)=\chi_2(1-\chi_2)\sigma_{\interior}(B)=\chi_2(1-\chi_2)\) as \(\chi_2\) is constant \(1\) for \(x_n\in[\tfrac{7}{9},\tfrac{8}{9}]\). Similarly, we also get that \(\sigma_{\interior}(B_P)\sigma_{\interior}(A_P)=\id_n\).
	
	Denote by \(\Phi_{\boundary}\) the homomorphism 
	\begin{equation*}\Mat_n(C(\partial Y)\rtimes\Gamma)\oplus \Mat_n(C(\partial Y)\rtimes\Gamma)\cong \Mat_n(C(\partial Y)\rtimes\Gamma\oplus C(\partial Y)\rtimes\Gamma)\hookrightarrow \Mat_n(\Sigma_{\boundary}\rtimes\Gamma)\end{equation*}
	and note that \(\sigma_{\boundary}\circ\Phi=\Phi_{\boundary}\circ r\), where \(r\) is induced by the restriction \((f,g)\mapsto(f|_{\partial Y},g)\).
	For the boundary symbol we employ \eqref{eq:special-boundary-symbol} and the \(\Gamma\)-invariance of the boundary symbols to compute
	\begin{align*}
		\sigma_{\boundary}(A_P)\sigma_{\boundary}(B_P)&= (\sigma_{\boundary}(A)\Phi_{\boundary}(P,P)+\Phi_{\boundary}(\id_n-P,0_n))(\sigma_{\boundary}(B)\Phi_{\boundary}(P,P)+\Phi_{\boundary}(\id_n-P,0_n))\\
		&= \sigma_{\boundary}(A)\sigma_{\boundary}(B)\Phi_{\boundary}(P,P)+\Phi_{\boundary}(\id_n-P,0) =\Phi_{\boundary}(\id_n,P),\\
		\sigma_{\boundary}(B_P)\sigma_{\boundary}(A_P)&= \sigma_{\boundary}(B)\sigma_{\boundary}(A)\Phi_{\boundary}(P,P)+\Phi_{\boundary}(\id_n-P,0_n)=\Phi_{\boundary}(P,0_n)+\Phi_{\boundary}(\id_n-P,0_n)\\
		&=\Phi_{\boundary}(\id_n,0).
	\end{align*}	
	Moreover, the map is additive, that is for \(P\in\Mat_n(C(\partial Y)\rtimes \Gamma)\) and \(Q\in\Mat_m(C(\partial Y)\rtimes \Gamma)\) one has 
	\begin{equation*}
		(B_P,\Phi(\id_n,P),\Phi(\id_n,0))+(B_Q,\Phi(\id_m,Q),\Phi(\id_m,0))=(B_{P\oplus Q},\Phi(\id_{n+m},P\oplus Q),\Phi(\id_{n+m},0)).
	\end{equation*} 
	When \(P=0_n\), then \((B_0,\Phi(\id_n,0),\Phi(\id_n,0))=(\Phi(\id_n,0),\Phi(\id_n,0),\Phi(\id_n,0))\) is a trivial elliptic triple. Suppose now that there is a homotopy \(P_t\) for \(t\in[0,1]\) of projections in \(\Mat_n(C(\partial Y)\rtimes\Gamma)\). Then the associated elliptic triples
	\((B_{P_t},\Phi(\id_n,P_t), \Phi(\id_n,0_n))\) define a homotopy as well. 
	Hence, we see that the assignment gives a well-defined map \(K_0(C(\partial Y)\rtimes\Gamma)\to\Ell(\mathcal A_0,\nlBM)\).
	
	Lastly, it is clear from the description of the map \(\Ell(\mathcal A_0,\nlBM)\to K_0(C(\partial Y)\rtimes\Gamma)\) above, that the described map is indeed a split.
 \end{proof}
 \begin{theorem}
 	Suppose that \cref{ass:topologically-free} holds and the partial action of \(\Gamma\) on \(Y\) is topologically free. The map \(K_0(C_0(T^*Y^\circ)\rtimes\Gamma)\to \Ell(\mathcal A_0,\nlBM)\) and the split \(s\colon K_0(C(\partial Y)\rtimes\Gamma)\to \Ell(\mathcal A_0,\nlBM)\) yield an isomorphism
 	\begin{equation}\label{eq:sum-decomp-of-ell}
 		K_0(C_0(T^*Y^\circ)\rtimes\Gamma)\oplus K_0(C(\partial Y)\rtimes\Gamma)\cong\Ell(\mathcal A_0,\nlBM).
 	\end{equation}
 \end{theorem}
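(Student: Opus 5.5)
The isomorphism \eqref{eq:sum-decomp-of-ell} will come out of the six-term exact sequence \eqref{eq:six-term-k-ell2}, so the plan is purely homological. That sequence arises from the short exact sequence $0\to C_{m_1}\rtimes\Gamma\to C_{m_2}\rtimes\Gamma\to C_{m_3}\rtimes\Gamma\to 0$. Here $K_0(C_{m_2}\rtimes\Gamma)\cong K_0(C_{\tilde m_2})\cong\Ell(\mathcal A_0,\nlBM)$ by \cref{res:ses-mapping-cone-partial-action} and the preceding lemma, and the outer terms are $K_*(C_0(T^*Y^\circ)\rtimes\Gamma)$ and $K_*(C(\partial Y)\rtimes\Gamma)$. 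The identification $K_i(C_{m_3}\rtimes\Gamma)\cong K_i(C(\partial Y)\rtimes\Gamma)$ comes from \eqref{eq:boundary-cones} together with \cref{res:boundary-cone-trivial}.

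The key step is to show that the two connecting maps in \eqref{eq:six-term-k-ell2} that start from the boundary terms vanish. These are $\partial_0\colon K_0(C(\partial Y)\rtimes\Gamma)\to K_1(C_0(T^*Y^\circ)\rtimes\Gamma)$ and $\partial_1\colon K_1(C(\partial Y)\rtimes\Gamma)\to K_0(C_0(T^*Y^\circ)\rtimes\Gamma)$.

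For $\partial_0$ we use the preceding proposition. Composing the split $s$ with $\Ell(\mathcal A_0,\nlBM)\to K_0(C(\partial Y)\rtimes\Gamma)$ gives the identity. The first thing to confirm is that this composite really is the map appearing in \eqref{eq:six-term-k-ell2}, i.e. that the isomorphism $K_0(C_{m_3}\rtimes\Gamma)\cong K_0(C(\partial Y)\rtimes\Gamma)$ induced by \eqref{eq:boundary-cones} matches the relative-$K$-cycle description $[P_Y\oplus P_{\partial Y},Q_Y\oplus Q_{\partial Y},\sigma(D)]\mapsto[P_{\partial Y}]-[Q_{\partial Y}]$ given before the split. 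Once that is settled, the map to $K_0(C(\partial Y)\rtimes\Gamma)$ is surjective, so $\partial_0=0$ by exactness. Exactness then gives the short exact sequence
\[0\to K_0(C_0(T^*Y^\circ)\rtimes\Gamma)/\operatorname{im}\partial_1\to\Ell(\mathcal A_0,\nlBM)\to K_0(C(\partial Y)\rtimes\Gamma)\to0,\]
which splits via $s$.

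The main obstacle is injectivity on the first summand, namely $\partial_1=0$. The paper gives no explicit split in odd degree, so I would argue as in \cite{MSS06}. By naturality of boundary maps, $\partial_1$ factors through the connecting map of the sequence $0\to C_{m_1}\rtimes\Gamma\to C_m\rtimes\Gamma\to C_b\rtimes\Gamma\to0$ read off from the mapping-cone diagram \eqref{eq:ses-mapping-cones}, via the morphism of short exact sequences induced by $C(Y)\to C(Y)\oplus C(\partial Y)$, $f\mapsto(f,0)$. More concretely, \eqref{eq:boundary-cones} shows that the $K_*(C(\partial Y)\rtimes\Gamma)$-part of $K_*(C_{m_3}\rtimes\Gamma)$ comes from the second copy of $C(\partial Y)$. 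This copy lifts equivariantly to $C_{m_2}$ through the lower-right corner $h\mapsto(0,h)$, the constant path $m_2(0,h)$. That gives a $\Gamma$-equivariant $^*$-homomorphism $C(\partial Y)\to C_{m_2}$ whose composite with $C_{m_2}\to C_{m_3}\to C(\partial Y)$ is the identity. Applying functoriality (\cref{res:induced-map-crossed-product}) yields a splitting at the level of crossed-product algebras, so $\partial_0$ and $\partial_1$ both vanish at once. This gives a second, independent proof of the even case as well.

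I would check carefully that this lift lands in the cone, i.e. that $m_2(0,h)$ maps to the correct component of $C_{m_3}$ compatibly with \eqref{eq:boundary-cones}. If it does not, the fallback is to use $K_*(C_b\rtimes\Gamma)=0$ (\cref{res:boundary-cone-trivial}) to transfer the splitting of \cref{res:split-2} to $C_m$.

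With $\partial_0=\partial_1=0$, the sequence \eqref{eq:six-term-k-ell2} breaks into split short exact sequences. The map $K_0(C_0(T^*Y^\circ)\rtimes\Gamma)\to\Ell(\mathcal A_0,\nlBM)$ is injective, and together with $s$ it gives the isomorphism \eqref{eq:sum-decomp-of-ell}.
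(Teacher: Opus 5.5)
\textbf{The proof of $\partial_1=0$ fails; only $\partial_0=0$ is established.} Your set-up and your treatment of $\partial_0$ are fine: $\partial_0=0$ is exactly what the split $s$ gives, and the compatibility you flag is what the paper asserts via relative $K$-cycles. But $\partial_1=0$ is the real content of the theorem, and your argument for it does not work.

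First, the map $h\mapsto((0,h),\beta_h)$ with $\beta_h\equiv m_2(0,h)$ does not land in $C_{m_2}$. The second component of a cone element must lie in $C_0((0,1],\Sigma)$, i.e. vanish as $t\to0$. But $m_2(0,h)=[\operatorname{diag}(0,h)]\neq0$ in $\Sigma$, since its boundary symbol has lower-right entry $h$. What you wrote maps into the mapping cylinder $Z_{m_2}$.

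This cannot be repaired. Suppose a $^*$-homomorphism $C(\partial Y)\to C_{m_2}$ split $((f,g),\beta)\mapsto g$. It would send $1$ to a projection $((f,1),\beta)$. Then $\beta$ is a norm-continuous path of projections with $\beta(t)\to0$, so $\beta\equiv0$ and $m_2(f,1)=0$, which is false. The same holds for $C(\partial Y)\rtimes\Gamma\to C_{m_2}\rtimes\Gamma$. This is exactly why the even split $s$ has to be built from the special problem $B_P$ of \cref{res:special-bm-elemnts} rather than from a homomorphism. It also means your ``second proof'' of $\partial_0=0$ is void.

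Your other two arguments do not close the gap either:
\begin{enumerate}
\item The morphism of extensions runs from $(C_{m_1},C_m,C_b)$ to $(C_{m_1},C_{m_2},C_{m_3})$. Naturality therefore only gives $\partial^{C_b}=\partial_1\circ\iota_*$ with $\iota\colon C_b\rtimes\Gamma\to C_{m_3}\rtimes\Gamma$. Since $K_*(C_b\rtimes\Gamma)=0$, this is vacuous.
\item The splitting of \cref{res:split-2} says nothing about where the lower-right corner classes land in $K_1(\Sigma\rtimes\Gamma)$.
\end{enumerate}

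\textbf{What $\partial_1=0$ actually requires.} Put $j(h)=m_2(0,h)$. Use the exact sequence $0\to C_m\rtimes\Gamma\to C_{m_2}\rtimes\Gamma\to C(\partial Y)\rtimes\Gamma\to0$ and naturality with respect to $0\to S\Sigma\rtimes\Gamma\to C_{m_2}\rtimes\Gamma\to C(Y)\rtimes\Gamma\oplus C(\partial Y)\rtimes\Gamma\to0$. One gets that $\partial_1=0$ holds if and only if $\tilde j_*K_1(C(\partial Y)\rtimes\Gamma)\subseteq\tilde m_*K_1(C(Y)\rtimes\Gamma)$ inside $K_1(\Sigma\rtimes\Gamma)$. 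Equivalently, $\psi\colon K_0(C_m\rtimes\Gamma)\to K_0(C_{m_2}\rtimes\Gamma)$ is injective.

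The paper obtains injectivity by writing the map as $\psi\circ\phi$, with $\phi$ the isomorphism of \cref{res:boundary-cone-trivial}, and comparing the cone sequences of $m$ and $m_2$ via the four lemma. Note that once the rows are put in their correct exact order, that comparison needs precisely the inclusion above, because $K_1(C(Y)\rtimes\Gamma)\to K_1(C(Y)\rtimes\Gamma)\oplus K_1(C(\partial Y)\rtimes\Gamma)$ is not onto. So on either route you need an odd-degree counterpart of $s$.

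\textbf{One way to supply it.} The symbol identities for $A_P,B_P$ only use that $\sigma(A),\sigma(B)$ are $\Gamma$-invariant and scalar. Hence they hold verbatim for projections $P$ over $(C(\partial Y)\rtimes\Gamma)\otimes C(S^1)$; see \cref{res:partial-tensor-prod}. The relative cycles $(\Phi(\id_n,P),\Phi(\id_n,0_n),\sigma(B_P))$ then show that $K_0((C_{m_2}\rtimes\Gamma)\otimes C(S^1))\to K_0((C(\partial Y)\rtimes\Gamma)\otimes C(S^1))$ is onto. It follows that $K_1(C_{m_2}\rtimes\Gamma)\to K_1(C(\partial Y)\rtimes\Gamma)$ is onto, and hence $\partial_1=0$.
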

 \begin{proof}
 	As the homomorphism \(\Ell(\mathcal A_0,\nlBM)\to K_0(C(\partial Y)\rtimes\Gamma)\) splits, it suffices to show that the map \(K_0(C_0(T^*Y^\circ)\rtimes\Gamma)\to \Ell(\mathcal A_0,\nlBM)\) is injective. Consider the following commuting diagram in \(K\)-theory induced by \eqref{eq:ses-mapping-cones}
 	\begin{equation*}
 		\begin{tikzcd}
 			K_1(\ker(\sigma_{\boundary})/\Compact\rtimes\Gamma) \arrow[d]\arrow[r]&K_1(\Sigma\rtimes\Gamma)\arrow[d]\\
 			K_0(C_{m_1}\rtimes\Gamma)\arrow[r] & K_0(C_{m_2}\rtimes\Gamma).
 		\end{tikzcd}
 	\end{equation*}
 	Under the previous identifications, the bottom map is the map in question \(K_0(C_0(T^*Y^\circ)\rtimes\Gamma)\to \Ell(\mathcal A_0,\nlBM)\). By the observations in \cref{res:boundary-cone-trivial}, \cref{res:split-1} and \cref{res:split-2}, the above diagram can be identified with 
 	\begin{equation}\label{eq:diagram-injective}
 		\begin{tikzcd}
 			K_1(C(Y^\circ)\rtimes\Gamma)\oplus K_0(C_{m_1}\rtimes\Gamma) \arrow[d,"\pr_2"]\arrow[r,"\rho\oplus\phi"]&K_1(C(Y)\rtimes\Gamma)\oplus K_0(C_m\rtimes\Gamma)\arrow[d,"\psi\circ\pr_2"]\\
 			K_0(C_{m_1}\rtimes\Gamma)\arrow[r] & K_0(C_{m_2}\rtimes\Gamma).
 		\end{tikzcd}
 	\end{equation}
 	Here, \(\rho\) is induced by the inclusion \(C_0(Y^\circ)\to C(Y)\) and \(\phi\colon K_0(C_{m_1}\rtimes\Gamma)\to K_0(C_m\rtimes\Gamma)\) is the isomorphism from \cref{res:boundary-cone-trivial}. Furthermore, \(\psi\colon K_0(C_m\rtimes\Gamma)\to K_0(C_{m_2}\rtimes\Gamma)\) denotes the map induced by the mapping cone sequence of
 	\begin{equation*}
 		\begin{tikzcd}
 			0\arrow[r,""]&\Sigma\arrow[r,"\id"]&\Sigma\arrow[r,""] &0\arrow[r]&0\\
 			0\arrow[r,""]&C(Y)\arrow[r,"{f\mapsto(f,0)}"]\arrow[u,"m"]&C(Y)\oplus C(\partial Y)\arrow[r,"{(f,g)\mapsto g}"]\arrow[u,"m_2"] &C(\partial Y)\arrow[r,""]\arrow[u,""]&0.
 		\end{tikzcd}
 	\end{equation*}
 	As the lower sequence splits, we see from the following diagram and the four-lemma that \(\psi\) is injective
 		\begin{equation*}
 		\begin{tikzcd}
 			K_1(\Sigma\rtimes\Gamma)\arrow[r]\arrow[d,equal]&K_0(C(Y)\rtimes\Gamma)\arrow[r]\arrow[d] & K_0(C_m\rtimes\Gamma)\arrow[r]\arrow[d,"\psi"]&K_0(\Sigma\rtimes\Gamma)\arrow[d,equal]\\
 			K_1(\Sigma\rtimes\Gamma)\arrow[r] & K_0(C(Y)\rtimes\Gamma\oplus C(\partial Y)\rtimes\Gamma)\arrow[r]& K_0(C_{m_2}\rtimes\Gamma)\arrow[r]&K_0(\Sigma\rtimes\Gamma).
 		\end{tikzcd}
 	\end{equation*}
 	Then by the commutativity of \eqref{eq:diagram-injective} the bottom map must be injective as well. 
 \end{proof}
 \begin{remark}
 	In \cite{TX06} a Chern character for an \'etale proper groupoid \(G\) is studied and it induces an isomorphism \(K_i(C^*( G))\otimes\C\cong H^*_c( G)\). 
 	
 	In our case, recall that for a partial action of \(\Gamma\) on a space \(X\) one has \(C_0(X)\rtimes\Gamma\cong C^*(X\rtimes\Gamma)\), see \cref{sec:partial-crossed-products}. Hence, the result of Tu and Xu can be used to compute the summands in \eqref{eq:sum-decomp-of-ell} if the partial action groupoids \(T^*Y^\circ\rtimes \Gamma\) and \(\partial Y\rtimes \Gamma\) are proper. They are always \'etale under our assumptions, see \cref{rem:etale}.

 	\comment{The partial action on the cutted space \(Y\) (and similarly \(T^*Y^\circ\) and \(\partial Y\)) should always be proper. Namely, \(Y\) is a disjoint union, denote its connected components by \(Y_i\) with \(i\in I\). Then it suffices to show for \(K_1,K_2\subseteq Y\) compact that \((r\times s)^{-1}(K_1\times K_2)\subseteq Y\times\Gamma\) is compact. 
 	Note that \(K_j\) for \(j=1,2\) must be the disjoint union of \(K_{j,i}\subseteq Y_i\) compact for \(i\in I\). Hence, \((r\times s)^{-1}(K_1\times K_2)\) is a finite union of sets of the form
 	\(\theta_{g}(Y_{1,k})\cap Y_{2,i}\times\{g^{-1}\}\), where \(g\in\Gamma\) is such that \(\theta_g(Y_k)=Y_i\) and, thus, compact.
 	}
 \end{remark}

  Next, we will show that the summand \(K_0(C(\partial Y)\rtimes\Gamma)\) of \(\Ell(\mathcal{A}_0,\mathcal A)\) does not contribute to the Fredholm index as in the case without partial actions. Recall that we assume here that $\Gamma$ is finitely generated and of polynomial growth. 
 
 In the proof of \cref{thm:index_sp_zero}, below, it will be convenient to replace the projection $P\in \Mat_n(C(\partial Y)\rtimes \Gamma)$ by a `smooth' version that can be taken arbitrarily close to $P$. We write $\Mat_n(C^\infty(\partial Y)\rtimes\Gamma)$ for the smooth crossed product defined in \cref{def:smooth-crossed-product}.
 
 \begin{lemma}\label{lem:replaceP}
Suppose that $\Gamma$ is finitely generated and of polynomial growth. Let $P\in \Mat_n(C(\partial Y) \rtimes \Gamma)$ be a projection. Then we can find a projection $\tilde P\in \Mat_n(C^\infty(\partial Y)\rtimes \Gamma)$ arbitrarily close to $P$. 
 \end{lemma}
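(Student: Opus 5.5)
The plan is the standard one: find a dense $*$-subalgebra $\mathcal S\subseteq C(\partial Y)\rtimes\Gamma$ that contains $C^\infty(\partial Y)\rtimes\Gamma$, is closed under holomorphic functional calculus, and in which $C^\infty(\partial Y)\rtimes\Gamma$ is itself dense for the Fréchet topology of $\mathcal S$. Then we correct an approximating element to a projection by the Riesz projection formula.

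\textbf{Step 1 (approximation).} Since only finitely many $\partial Y_g$ are nonempty, every element of $C(\partial Y)\rtimes\Gamma$ can be written as a finite sum $\sum_g \pi(f_g)u_g$ with $f_g\in C_0$-functions supported in the clopen sets $\partial Y_g$. Because $\partial Y_g$ is clopen in $\partial Y$ (\cref{sec:closed}), $C^\infty(\partial Y_g)$ is dense in $C(\partial Y_g)$. Hence $\Mat_n(C^\infty(\partial Y)\rtimes\Gamma)$ is dense in $\Mat_n(C(\partial Y)\rtimes\Gamma)$ in norm. Given $P$ and $\varepsilon>0$, we pick a self-adjoint $R\in \Mat_n(C^\infty(\partial Y)\rtimes\Gamma)$ with $\|R-P\|<\varepsilon$; we may replace $R$ by $(R+R^*)/2$, using that $C^\infty(\partial Y)\rtimes\Gamma$ is a $*$-algebra. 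For small $\varepsilon$ the spectrum of $R$ avoids the line $\Re z=\tfrac12$. We then set
\begin{equation*}
\tilde P=\frac{1}{2\pi i}\oint_{|z-1|=\frac12}(z-R)^{-1}\,dz ,
\end{equation*}
which is a projection with $\|\tilde P-P\|\le C\varepsilon$ by standard estimates.

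\textbf{Step 2 (smoothness of $\tilde P$).} We must show that $(z-R)^{-1}$ lies in $\Mat_n$ of a smooth algebra, and that $\tilde P$ lies in $\Mat_n(C^\infty(\partial Y)\rtimes\Gamma)$ itself. This is the main obstacle. In fact $\Mat_n(C^\infty(\partial Y)\rtimes\Gamma)$ is not obviously inverse closed, and the resolvent is in general an infinite series in $\Gamma$; here finiteness of the support does not survive products, since the support of $u_gu_h$ lies in $gh$. The polynomial growth hypothesis enters at this point.

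Following \cite{NSS08}, we introduce the Fréchet algebra $\mathcal S$ of sums $\sum_g \pi(f_g)u_g$ with $f_g\in C^\infty(\partial Y_g)$ such that the $C^k$-norms of $f_g$ decay faster than any power of the word length $|g|$. We then verify two things. First, $\mathcal S$ is a subalgebra; this uses that $\Gamma$ acts isometrically, so $C^k$-norms of $\theta_g^*f$ grow at most polynomially in $|g|$. Second, $\mathcal S$ is spectrally invariant in $C(\partial Y)\rtimes\Gamma$. For this we would try the Jolissaint/Schweitzer route: rapid decay plus polynomial growth gives a derivation-based (Sobolev-type) description, namely closability of the derivations $D(\sum f_gu_g)=\sum |g| f_g u_g$ together with the vector fields on $\partial Y$. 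Spectral invariance then follows from the fact that the domains of closed derivations are inverse closed (Schweitzer's lemma). Amenability, coming from polynomial growth, identifies the full and reduced crossed products, so the spectrum may be computed in the reduced one.

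Finally, $\tilde P\in\Mat_n(\mathcal S)$ holds by holomorphic calculus. To land in the finitely supported algebra we approximate $\tilde P$ in $\mathcal S$ by truncations and repeat the Riesz procedure. If the statement allows $C^\infty(\partial Y)\rtimes\Gamma$ to be interpreted as the smooth rapidly decaying crossed product, as in \cite{NSS08}, we stop at $\tilde P\in \Mat_n(\mathcal S)$. Otherwise we note that, because only finitely many $\partial Y_g\neq\emptyset$, the relevant terms can be rearranged so that the coefficients used are the finitely many $f_g$ living on the $\partial Y_g$. This rearrangement is the point we would need to check carefully.
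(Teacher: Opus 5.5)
Your Step~1 (density of finitely supported smooth elements, symmetrization, Riesz projection) is the paper's first step. The decisive Step~2 is spectral invariance: that $(z-R)^{-1}$, and hence $\tilde P$, lies in $\Mat_n(C^\infty(\partial Y)\rtimes\Gamma)$. You only announce that you ``would try'' an argument there, and the ingredients you name do not work as stated.

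\begin{itemize}
\item The map $\sum_g f_gu_g\mapsto\sum_g|g|f_gu_g$ is not a derivation, because word length is not additive.
\item The map $\sum_g f_gu_g\mapsto\sum_g X(f_g)u_g$ is a derivation only for $\Gamma$-invariant vector fields $X$, and you do not show that such fields detect smoothness of all coefficients.
\item Consequently $\mathcal S$ is never exhibited as the smooth domain of a family of closed derivations, so Schweitzer's lemma cannot be invoked.
\end{itemize}

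The premise of Step~2 is also wrong here. As you note in Step~1, every element of $C(\partial Y)\rtimes\Gamma$ is a finite sum over $F=\{g\colon\partial Y_g\neq\emptyset\}$. So resolvents are not infinite series, your $\mathcal S$ coincides with $C^\infty(\partial Y)\rtimes\Gamma$ (the decay condition is vacuous), and the only issue is smoothness of the coefficients of an inverse. Had $\mathcal S$ been genuinely larger, ``truncate and repeat the Riesz procedure'' would be circular. The Riesz projection of a finitely supported element is finitely supported only if that algebra is already inverse closed.

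The paper supplies the missing step by globalization. The partial action on $\partial Y$ is the restriction of the $\Gamma$-action on $\partial V$. For that action, \cite{Sch93}*{Corollary~7.16} gives spectral invariance of $C^\infty(\partial V)\rtimes\Gamma$ in $C(\partial V)\rtimes\Gamma$; this is where finite generation and polynomial growth enter. Then \cite{Sch93}*{Theorem~3.2(2)} transfers this to $C^\infty(\partial Y)\rtimes\Gamma\subseteq C(\partial Y)\rtimes\Gamma$. Finally, continuity of inversion \cite{Wae71} makes the contour integral converge in the smooth algebra.

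Alternatively, your finiteness observation gives a direct proof, as follows.
\begin{itemize}
\item Since the $\partial Y_g$ are clopen, the index set $S_x=\{k\colon x\in\partial Y_{k^{-1}}\}$ of the regular representation $\lambda_x$ on $\ell^2(S_x)$ is locally constant in $x$.
\item With respect to the standard basis $(\epsilon_k)_{k\in S_x}$, the entries of $\lambda_x(a)$ have the form $f_{ij^{-1}}(\theta_i(x))$. Hence $x\mapsto\lambda_x(a)$ is smooth on each piece whenever $a$ is smooth.
\item The $g$-th coefficient $h_g$ of $a^{-1}$ satisfies $h_g(\theta_g(x))=\langle\lambda_x(a)^{-1}\epsilon_e,\epsilon_g\rangle$, which is smooth by Cramer's rule.
\end{itemize}

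Some such argument is indispensable. As written, your proposal leaves the lemma's only nontrivial point unproved.
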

 
 \begin{proof}
 Choose $P'\in \Mat_n(C^\infty(\partial Y)\rtimes \Gamma)$ close to $P$. 
 By possibly replacing $P'$ by $(P'+P^{\prime*})/2$ we may assume $P'$ to be symmetric. In general, $P'$ will not be a projection; however, its spectrum will be close to $0$ and $1$. We can therefore find a contour $\mathscr C$ that simply encloses the eigenvalues  near $1$ but none of the eigenvalues near zero. Then 
 $$\tilde P = \frac{1}{2\pi i} \int_{\mathscr C} (P'-\lambda)^{-1} \, d\lambda$$
 will be a projection close to $P$. 
 
 Now $\Mat_n(C^\infty(\partial Y)\rtimes \Gamma)$ is spectrally invariant in $\Mat_n(C(\partial Y)\rtimes \Gamma)$ (the argument will follow, below). Hence  $(P'-\lambda)^{-1} \in \Mat_n(C^\infty(\partial Y)\rtimes \Gamma)$ and, since inversion is continuous in $\Mat_n(C^\infty(\partial Y)\rtimes \Gamma)$ by \cite{Wae71},   the integral actually converges in $\Mat_n(C^\infty(\partial Y)\rtimes \Gamma)$, so that 
 $\tilde P$ is a projection in $\Mat_n(C^\infty(\partial Y)\rtimes \Gamma)$.
 
 In order to see the spectral invariance, we first recall that the partial action by $\Gamma$ on $\partial Y$ is globalized by the \(\Gamma\)-action on \(\partial V\). According to  \cite{Sch93}*{Corollary 7.16} \comment{or cite Anton's book Prop. 1.33 for noncompact case?}, the smooth crossed product $C^\infty(\partial V)\rtimes \Gamma$ is spectrally invariant in $C(\partial V)\rtimes \Gamma$, since $\Gamma$ is finitely generated and of polnomial growth. Note that $C^\infty(\partial V)\rtimes \Gamma$ is denoted by $L^\tau_1(\Gamma,C^\infty(\partial V))$ in \cite{Sch93}*{Definition 6.1}.
We then apply \cite{Sch93}*{Theorem 3.2(2)} with 
\begin{itemize}
\item $A_1=C^\infty(\partial Y)\rtimes \Gamma$ the smooth partial crossed product,
\item $A= C^\infty(\partial V)\rtimes \Gamma$ the smooth globalized crossed product,
\item $B_1=C(\partial Y)\rtimes \Gamma$ the partial crossed product,
\item $B= C(\partial V)\rtimes \Gamma$ the globalized crossed product.
\end{itemize} 
 Since $A$ is spectrally invariant in $B$, $A_1$ will be spectrally invariant in $B_1$. This completes the argument. 
 \end{proof}

 \begin{theorem}\label{thm:index_sp_zero}
 	Suppose that \cref{ass:topologically-free} holds, the partial action of \(\Gamma\) on \(Y\) is topologically free and $\Gamma$ is finitely generated and of polynomial growth. For every projection \(P\in\Mat_n(C(\partial Y)\rtimes\Gamma)\), the associated elliptic Boutet de Monvel problem 
 	\begin{equation*}
 		s(P)=(B_P,\Phi(\id_n,0_n),\Phi(\id_n,P)),
 	\end{equation*}
 	where \(B_P\) is defined in \eqref{eq:B_P}, has index zero. 	
 \end{theorem}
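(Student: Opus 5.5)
Since index is a stable homotopy invariant, I would first use \cref{lem:replaceP} to replace \(P\) by a projection \(\tilde P\in\Mat_n(C^\infty(\partial Y)\rtimes\Gamma)\) so close to \(P\) that the two are homotopic through projections. Then \(s(P)\) and \(s(\tilde P)\) are homotopic elliptic triples, and they have the same index. So from now on I assume \(P=\sum_g \pi(f_g)V_g\) with \(f_g\in \Mat_n(C^\infty(\partial Y_g))\) and only finitely many \(g\) involved.

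The core of the argument follows Boutet de Monvel's proof that special boundary problems have index zero. The key point is that the operator \(\chi_2\tilde P\) near the boundary acts only in the tangential variables. Because \(\Gamma\) acts trivially in the normal direction, it commutes with operators acting only in \(x_n\).

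I would therefore realize \(A\) and \(B\) concretely in the collar \(U\cong\partial Y\times[0,1]\). They are to be built from the order-reducing operators with symbols \(\tilde a^0\) and \((\tilde a^0)^{-1}\), so that their principal parts are \(\Gamma\)-invariant, as in \cref{res:special-bm-elemnts}. In the model \(\partial Y\times\R_+\), the boundary symbol of \(B\) acts on the basis \(\varphi_l\) as a shift \(\varphi_l\mapsto\varphi_{l+1}\), together with the Poisson part \(1\mapsto\tfrac12\varphi_0\). This makes
\[
\begin{pmatrix}(a^0)^{-1}_+ & k^0\end{pmatrix}\colon L^2(\R_+)\oplus\C\to L^2(\R_+)
\]
a unitary-type isomorphism after normalization, fibrewise in \(\xi'\).

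Using \cref{res:smooth-calculus}, I would write \(B_P\) restricted to the collar as an operator of the form \(\Phi(\chi_2\tilde P,P)B'+\Phi(\id_n-\chi_2\tilde P,0)\) modulo compacts. Here \(B'\) is an actual invertible map \(L^2(U)\oplus L^2(\partial Y)\to L^2(U)\), obtained from the operator-valued quantization \(\mathrm{Op}(\langle\xi'\rangle)\) and the Laguerre/Wiener--Hopf structure. Since \(\Lambda=(I-\Delta)^{1/2}\) commutes with \(V_g\), the order reductions \(\Lambda^{\pm1/2}\) appearing in \(t\) and \(k\) commute with \(P\). Hence \(B'\) commutes with \(\Phi(\tilde P,P)\) up to compacts.

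Granting this, \(B_P\colon \Image\Phi(\id_n,P)\to\Image\Phi(\id_n,0)\) is a compact perturbation of an operator of the form \(\tilde P B'\tilde P+(1-\chi_2\tilde P)\). That operator is invertible between the two ranges, because \(B'\) maps \(\Image(\tilde P)\oplus\Image(P)\) bijectively onto \(\Image(\tilde P)\). This gives index zero. An alternative to the exact inverse is a homotopy: deform \(k\to\infty\) in the symbols of \cref{res:special-bm-elemnts}, compute the kernel and cokernel via the Laguerre basis decomposition \(L^2(\R_+)=\bigoplus_l\C\varphi_l\otimes\) (tangential), and observe that the shift cancels exactly one copy of \(\Image P\).

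The main obstacle is making the commutation of \(B'\) with \(\Phi(\tilde P,P)\) precise with a genuinely invertible operator, not just at the symbol level. This is where smoothness of \(P\) enters. The commutators \([\Lambda^{1/2},\pi(f_g)]\) must be of lower order, hence compact, and this needs \(f_g\in C^\infty\) together with finiteness of the sum. The commutator with the cut-offs \(\chi_1,\chi_2\) must also be handled. I would settle this by first proving that \([\mathrm{Op}(b),\pi(f_g)V_g]\) is of order \(m-1\) for \(\Gamma\)-invariant symbols \(b\) of order \(m\). I would then use \cref{res:smooth-calculus}\ref{item:kernel} to get compactness.
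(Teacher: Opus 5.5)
You reduce to a smooth projection via Lemma~\ref{lem:replaceP} exactly as the paper does. After that, however, the core step has a genuine gap. From ``$B'$ is invertible and commutes with $\Phi(\tilde P,P)$ up to compacts'' you conclude that $B'$ maps $\Image(\tilde P)\oplus\Image(P)$ bijectively onto $\Image(\tilde P)$. That conclusion requires \emph{exact} commutation. If the commutation holds only modulo compacts, the compression of an invertible operator to the ranges is Fredholm, but its index is not determined.

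A standard counterexample: the operator $M_z$ of multiplication by $z$ on $L^2(S^1)$ is unitary and commutes with the Hardy projection $\Pi$ up to a rank-one operator. Yet $\Pi M_z\Pi$ on $\Image\Pi$ is the unilateral shift, of index $-1$. So your proposed remedy for the ``main obstacle'' cannot close the argument. Showing that $[\Lambda_{\partial Y}^{1/2},\pi(f_g)]$ is of lower order, hence compact, only gives commutation modulo compacts. The index of exactly such a compression is what the theorem asserts. Exact commutation is not available with your $B'$ either. $\Lambda_{\partial Y}^{\pm1/2}$ commute with the $V_g$, but not with the multiplication parts $\pi(f_g)$ of $P$. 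So your sentence that the order reductions ``commute with $P$'' is false as stated.

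Second, the cut-offs are not handled. A $B'$ that is exactly invertible through the Laguerre/Wiener--Hopf structure comes from functional calculus in $\Lambda_{\partial Y}$ and $D_n$. It is therefore translation invariant in $x_n$, and its interior symbol is $(\tilde a^0)^{-1}$ on the whole collar. By contrast, $\sigma_{\interior}(B)=(a^0)^{-1}$ from Lemma~\ref{res:special-bm-elemnts} depends on $x_n$ through $\chi_1$ and equals $1$ for $x_n\ge\tfrac23$, i.e.\ before $\chi_2$ starts to decay. Consequently $\Phi(\chi_2\tilde P,P)B'+\Phi(\id_n-\chi_2\tilde P,0)$ differs from $B_P$ by a non-compact operator. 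It is not even elliptic. Take $P=1$, a point where $\chi_2=\tfrac12$, and the conormal direction, where $(\tilde a^0)^{-1}=-1$; there its interior symbol is $\tfrac12(-1)+\tfrac12=0$.

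The $\chi_1$-interpolation is what makes $B_P$ elliptic, and once the symbol depends on $x_n$ you no longer have an exact inverse. Your operator $\tilde PB'\tilde P+(1-\chi_2\tilde P)$ simply drops this region, which is where the index would have to be computed.

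The paper avoids both problems as follows:
\begin{enumerate}
\item It composes $A_P$ with the invertible $\Lambda_-\colon H^1\to L^2$ and with $\id\oplus P\Lambda_{\partial Y}^{-1/2}$.
\item That second factor has index zero by Lemma~\ref{PLambda}. The positivity and injectivity of both compressions $P\Lambda_{\partial Y}^{-1/2}P$ and $(1-P)\Lambda_{\partial Y}^{-1/2}(1-P)$ is exactly the extra input that pins down the index of a compression.
\item It then deforms the interior symbol through invertible symbols to H\"ormander's first-order problem $(D_P,P\gamma_0)$, whose index is zero by H\"ormander's Proposition~20.3.1.
\end{enumerate}
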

 \begin{proof}
 	Equivalently, we can show that the Fredholm inverse \(A_P\colon L^2(Y,\C^n)\to L^2(Y,\C^n)\oplus P L^2(\partial Y,\C^n)\) has index zero. Since $A_P$ is a Fredholm operator we can replace $P$ by a projection in $\Mat_n(C^\infty(\partial Y)\rtimes_{\smooth} \Gamma)$ so close to $P$ that the index does not change. 
	So we may assume from the beginning that $P\in \Mat_n(C^\infty(\partial Y)\rtimes_{\smooth} \Gamma)$. 
	
Our strategy then is to compose $A_P$ from the left and the right with Fredholm operators of index zero in order to obtain a boundary value problem 
$$(\tilde D_P,R_P): H^1(Y,\C^n) \to L^2(Y,\C^n) \oplus PH^{1/2}(\partial Y, \C^n),$$ 
where $R_Pu=P\gamma_0u)$, and such that $(\tilde D_P, R_P)$ is homotopic through elliptic elements to a boundary value problem $(D_P,R_P)$ for which one knows that the index is zero. For details see \cref{rem:replaceP}, below.

We fix first order pseudodifferential operators \(\Lambda_Y\) on \(Y\) with principal symbol $\norm{\xi}$ and \(\Lambda_{\partial Y}\) on \(\partial Y\) with principal symbol $\norm{\xi'}$ such that $\Lambda_Y$ and $\Lambda_{\partial Y}$ are $\Gamma$-invariant. Then we define the operator
 		$$
 	\Lambda_{-}=\Lambda^{\interior}+\Lambda^{\boundary}: C^\infty(  Y,\mathbb{C}^n)\to C^\infty (Y,\mathbb{C}^n),
 	$$
 	where
 	$$
 	\Lambda^{\interior}=(1-\chi_3)(\Lambda_Y\otimes 1_n) (1-\chi_3) \
 	$$
and 
 	$$
 	\Lambda^{\boundary}=\chi_3\left[(-iD_n+\Lambda_{\partial Y})\otimes 1_n\right].
 	$$
Here, \(\chi_3\in C^\infty_c([0,1))\) is chosen  in such a way that \(\chi_2\chi_3 =\chi_2\), for the function  \(\chi_2\) in \cref{res:special-bm-elemnts}.
By \cite{Gru96}*{Theorem~2.5.2},  \(\Lambda_{-}\) defines an invertible bounded operator \(H^1(Y,\C^n)\to L^2(Y,\C^n)\).
We next consider the composition 
\begin{eqnarray*}
(\id\oplus {P}\Lambda_{\partial Y}^{-1/2})\circ A_P\circ \Lambda_{-}\colon H^1(Y,\C^n)\to L^2(Y,\C^n)\oplus P H^{1/2}(\partial Y, \C^n).
\end{eqnarray*}
Clearly, $\sigma_{\interior}(A_P\circ\Lambda_-)$ is invertible on $S^*Y$.
Using the fact that $\chi_2 \chi_3=\chi_2$ and the function $\chi_1$ from \cref{res:special-bm-elemnts}, we find 
\begin{eqnarray*}
\sigma_{\interior}(A_P\circ\Lambda_-) 
&=&
\chi_2P(i\xi_n+\|\xi'\|) ^{\chi_1}(-i\xi_n+\|\xi'\|)^{1-\chi_1}\otimes 1_n\\
&&+\chi_2(1-P) (-i\xi_n+\|\xi'\|)\otimes 1_n\\
&&+(\chi_3-\chi_2) (-i\xi_n+\|\xi'\|) \otimes 1_n\\
&& +(1-\chi_3)\|\xi\|(1-\chi_3) \otimes 1_n. 
\end{eqnarray*}

Next consider the boundary symbol of $A_P\circ \Lambda_-$. Recall that the  boundary symbol of $T$ is $t_0(x',\xi',\xi_n) =  \|\xi'\|^{1/2}(\|\xi'\|-i\xi_n)^{-1}$. Moreover,  \(R_P(u)=Pu|_{\partial Y}= P\gamma_0 u\) with the boundary evaluation operator $\gamma_0$, where 
the boundary symbol of $\gamma_0$ is the constant function $1$.
We see that 
\begin{eqnarray*}
\lefteqn{\sigma_{\boundary}(P(\Lambda_{\partial Y}^{-1/2}\otimes 1_n) (T(\Lambda_{\partial Y}-iD_n) \otimes 1_n)}\\
&=& \sigma_{\boundary}(P(\Lambda_{\partial Y}^{-1/2} T (\Lambda_{\partial Y}-iD_n))\otimes 1_n)  \\
&=&P(\|\xi'\|^{-1/2} t_0(x',\xi',\xi_n) (\|\xi'\|-i\xi_n)\otimes 1_n)\\
&=&\sigma_{\boundary}(P\gamma_0) .
\end{eqnarray*}

In \cite{Hoe83c}*{Proposition~20.3.1}, H\"ormander has shown that the boundary value problem 
\begin{eqnarray*}
(D_P,R_P): H^1(Y) \oplus L^2(Y)\oplus H^{1/2} (\partial Y) 
\end{eqnarray*}
with 
\begin{eqnarray*}
D_P &=& \chi_3[(iD_n+\Lambda_{\partial Y})P + (-iD_n+\Lambda_{\partial Y})(1-P)]+ (1-\chi_3)\Lambda (1-\chi_3)
\end{eqnarray*}
and $R_P$ as above, has index zero. 
We notice that the interior symbols of the pseudodifferential part $\tilde D_P$ of $A_P\circ \Lambda_-$ and of $D_P$ coincide on the sets where $\chi_1=1$ or $1-\chi_3=1$. 
We can then find a homotopy through invertible symbols joining $\sigma_{\interior}(A_P\circ  \Lambda_-)$ and $\sigma_{\interior}(D_P)$.  

Finally we observe that $\Lambda_-: L^2(Y,\C^n) \to H^1(Y,\C^n)$ is an isomorphism and that 
\begin{eqnarray*}
P\Lambda_{\partial Y}^{-1/2}: PL^2(\partial Y,\C^n) \to PH^{1/2}(\partial Y,\C^n	) 
\end{eqnarray*}
is a Fredholm operator of index zero, see \cref{PLambda}, below. 
Combining these facts we conclude that  the index of $A_P$ and hence also that of $ s(P)$ is zero.  
 \end{proof}

 \begin{lemma} \label{PLambda}
 Let $\Lambda_{\partial Y}$ be a positive  first order pseudodifferential operator on $\partial Y$ such that $\Lambda_{\partial  Y}^{-1/2}: L^2(\partial Y)\to H^{1/2}(\partial Y)$ is an isomorphism. E.g., choose $\Lambda_{\partial Y}=(1-\Delta)^{1/2}$ where $\Delta$ is the Laplacian with respect to the $\Gamma$-invariant metric. Moreover, let $P\in \Mat_n(C^\infty(\partial Y)\rtimes\Gamma)$.
 Then the map  $P\Lambda_{\partial Y}^{-1/2}: PL^2(\partial Y) \to PH^{1/2} (\partial Y) $ is a Fredholm operator of index zero. 
 \end{lemma}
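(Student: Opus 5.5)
We want to show that $P\Lambda_{\partial Y}^{-1/2}\colon PL^2(\partial Y,\C^n)\to PH^{1/2}(\partial Y,\C^n)$ is Fredholm of index zero. The strategy is to realize this operator as a compact perturbation of an isomorphism, and then to deform it to an isomorphism through a continuous family of Fredholm operators.

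\textbf{Step 1: the commutator is compact.} Here $P$ acts as $\sum_g \pi(f_g)V_g$ with smooth $f_g$, and $\Lambda_{\partial Y}^{-1/2}$ commutes with every $V_g$ by $\Gamma$-invariance of the metric (as in the proof of \cref{res:smooth-calculus}). Hence
\[
[P,\Lambda_{\partial Y}^{-1/2}]=\sum_g[\pi(f_g),\Lambda_{\partial Y}^{-1/2}]V_g .
\]
Each commutator $[\pi(f_g),\Lambda_{\partial Y}^{-1/2}]$ is a classical pseudodifferential operator of order $-3/2$. It therefore maps $L^2$ into $H^{3/2}$, and so it is compact $L^2\to H^{1/2}$. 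The same holds with $\Lambda_{\partial Y}^{1/2}$ in place of $\Lambda_{\partial Y}^{-1/2}$: the commutator is then of order $-1/2$ and is compact $H^{1/2}\to L^2$. Both conclusions are instances of \cref{res:smooth-calculus}\ref{item:kernel},\ref{item:product}, since the principal symbol of the commutator vanishes. A preliminary point: because $P$ is a $C^\infty$-crossed-product element, it acts boundedly on every $H^s$ and is a projection there. So $PH^{1/2}$ is a closed complemented subspace, and it is dense in $PL^2$.

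\textbf{Step 2: Fredholm property.} Take $S=P\Lambda_{\partial Y}^{1/2}\colon PH^{1/2}\to PL^2$. For $u\in PL^2$,
\[
SP\Lambda_{\partial Y}^{-1/2}u=P\Lambda_{\partial Y}^{1/2}\Lambda_{\partial Y}^{-1/2}Pu+P\Lambda_{\partial Y}^{1/2}[P,\Lambda_{\partial Y}^{-1/2}]u
= u+K_1u .
\]
The term $K_1$ is compact on $PL^2$, since $[P,\Lambda_{\partial Y}^{-1/2}]$ is compact $L^2\to H^{1/2}$ and $\Lambda_{\partial Y}^{1/2}$ is bounded $H^{1/2}\to L^2$. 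The other composition, $P\Lambda_{\partial Y}^{-1/2}S$, equals the identity on $PH^{1/2}$ plus a compact operator by the symmetric argument. Thus $S$ is a parametrix, and $P\Lambda_{\partial Y}^{-1/2}$ is Fredholm.

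\textbf{Step 3: the index is zero.} Use the family $T_t=P\Lambda_{\partial Y}^{-t/2}\colon PH^{0}\to PH^{t/2}$ for $t\in[0,1]$. To make this a single continuous family, conjugate by the fixed isomorphisms $\Lambda_{\partial Y}^{t/2}$ so that every operator lives on $L^2$:
\[
\widetilde T_t=\Lambda_{\partial Y}^{t/2}P\Lambda_{\partial Y}^{-t/2}\quad\text{restricted to } PL^2\to \Lambda_{\partial Y}^{t/2}PH^{t/2}.
\]
The subspaces vary with $t$, which is awkward, so the cleaner route is this. Consider
\[
\widetilde T=P\Lambda_{\partial Y}^{-1/2}P+(1-P)\Lambda_{\partial Y}^{-1/2}(1-P)\colon L^2\to H^{1/2}.
\]
It is the direct sum of our operator and its analogue for $1-P$. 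It differs from $\Lambda_{\partial Y}^{-1/2}$ by terms involving $[P,\Lambda_{\partial Y}^{-1/2}]$, which are compact $L^2\to H^{1/2}$, so $\operatorname{ind}\widetilde T=0$. This gives $\operatorname{ind}(P\Lambda^{-1/2}_{\partial Y})+\operatorname{ind}((1-P)\Lambda^{-1/2}_{\partial Y})=0$, which falls short of what we need. To finish, deform the exponent. The family $s\mapsto P\Lambda_{\partial Y}^{-s}P+(1-P)$, viewed between suitably interpolated spaces, is the obvious candidate, but the spaces change with $s$ and need extra care. Instead, use the isometric identification $\Lambda_{\partial Y}^{1/4}\colon H^{1/2}\to H^{1/4}$ together with the symmetric splitting
\[
P\Lambda_{\partial Y}^{-1/2}P\simeq \bigl(P\Lambda_{\partial Y}^{-1/4}P\bigr)\circ\bigl(P\Lambda_{\partial Y}^{-1/4}P\bigr)\quad\text{modulo compacts}.
\]
Self-adjointness of $\Lambda_{\partial Y}$, with $P$ an orthogonal projection in $L^2$ (after arranging $P=P^*$ as in \cref{lem:replaceP}), shows that $P\Lambda_{\partial Y}^{-1/4}P\colon PH^{-1/4}\to PH^{1/4}$ is, up to compacts, the adjoint of itself under the $L^2$-pairing between $H^{1/4}$ and $H^{-1/4}$. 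A Fredholm operator that is its own adjoint in this sense has index zero, because $\operatorname{ind}T^*=-\operatorname{ind}T$. Therefore the square also has index zero.

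\textbf{Expected obstacle.} The main difficulty is Step 3, in particular the functional-analytic bookkeeping. One has to check that $PH^{s}$ and $PH^{-s}$ are dual to each other under the $L^2$-pairing. This holds because $P^*=P$ and $P$ acts continuously on every $H^s$. One also has to check that the compact errors are genuinely compact between the relevant closed subspaces.
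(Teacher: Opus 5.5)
\textbf{There is a genuine gap in Step 3.} Steps 1 and 2 are correct. Writing $\Lambda=\Lambda_{\partial Y}$, your parametrix $P\Lambda^{1/2}$ is a fine alternative to reading the Fredholm property off the block decomposition. Your operator $\widetilde T$ gives exactly the relation
\[
\operatorname{ind}(P\Lambda^{-1/2}P)+\operatorname{ind}\bigl((1-P)\Lambda^{-1/2}(1-P)\bigr)=0
\]
on the level $L^2\to H^{1/2}$, which the paper also derives. The problem is how you then try to finish.

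First, $P\Lambda^{-1/4}P$ has order $-1/4$, so it maps $PH^{-1/4}$ into $PL^2$, not into $PH^{1/4}$. The self-dual pair for it would be $PH^{-1/8}\to PH^{1/8}$. For $P\Lambda^{-1/2}P$ itself the self-dual pair is $PH^{-1/4}\to PH^{1/4}$, where it is exactly symmetric, so factoring into two pieces gains nothing.

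Second, and more seriously, your two factors act $PL^2\to PH^{1/4}$ and $PH^{1/4}\to PH^{1/2}$. The operator in the lemma acts $PL^2\to PH^{1/2}$. None of these is a self-dual situation, so $\operatorname{ind}T^*=-\operatorname{ind}T$ says nothing about them. To transfer index zero from a self-dual level you would need the index of $P\Lambda^{-r}P\colon PH^{s}\to PH^{s+r}$ to be independent of $s$. Since $P$ is not elliptic, there is no regularity of kernel and cokernel to fall back on, and your proposal gives no argument for this.

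\textbf{The missing step is short and uses only what you already have.} Since $P$ is a projection in the $C^*$-algebra, $P=P^*$ on $L^2$ already; nothing needs to be arranged. For $u\in PL^2$,
\[
(P\Lambda^{-1/2}Pu,u)=(\Lambda^{-1/2}u,u)=\|\Lambda^{-1/4}u\|^2 .
\]
So $P\Lambda^{-1/2}Pu=0$ forces $u=0$, because $\Lambda^{-1/4}$ is injective. The same holds for $1-P$. Hence both diagonal operators, $PL^2\to PH^{1/2}$ and $(1-P)L^2\to(1-P)H^{1/2}$, are injective, so each has index $\le 0$. Since their indices sum to zero by your $\widetilde T$ argument, both vanish. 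This positivity-plus-injectivity step is exactly how the paper concludes.
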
 	
 
 \begin{proof} 
 Write 
 $$\Lambda_{\partial  Y}^{-1/2} = P\Lambda_{\partial  Y}^{-1/2}P+P\Lambda_{\partial  Y}^{-1/2}(1-P) + (1-P)\Lambda_{\partial  Y}^{-1/2}P + (1-P)\Lambda_{\partial  Y}^{-1/2}(1-P)$$
 and 
 $P\Lambda_{\partial  Y}^{-1/2}(1-P) = P [P,\Lambda_{\partial  Y}^{-1/2}](1-P)$.
We note that \(P\in \Psi^0_\Gamma(\partial Y)\) and \(\Lambda^{-1/2}_{\partial Y}\in \Psi^{-1/2}_\Gamma(\partial Y)\), and that their principal symbols commute as the principal symbol of \(\Lambda^{-1/2}_{\partial Y}\in \Psi^{-1/2}_\Gamma(\partial Y)\) is \(\Gamma\)-invariant. Hence, the operator \([P,\Lambda_{\partial  Y}^{-1/2}]\) defines a compact operator \(L^2(\partial Y)\to H^{1/2}(\partial Y)\) by \cref{res:smooth-calculus}. The same is true for $(1-P)\Lambda^{-1/2}_{\partial Y} P$. Hence 
 \begin{eqnarray*}
 P\Lambda_{\partial  Y}^{-1/2}P+ (1-P)\Lambda_{\partial  Y}^{-1/2}(1-P) \equiv \Lambda_{\partial  Y}^{-1/2} \text{\;\rm  mod } \Compact
\end{eqnarray*}
is a Fredholm operator of index zero. We deduce that both $P\Lambda^{-1/2}_{\partial Y}P$ and $(1-P)\Lambda^{-1/2}_{\partial Y}(1-P)$ are Fredholm operators, and the sum of their indices is zero.
 
Now, $P\Lambda_{\partial Y}^{-1/2}P = (\Lambda^{-1/4}_{\partial Y}P)^*(\Lambda^{-1/4}_{\partial Y}P)$ is a positive operator.
 If $P\Lambda_{\partial Y}^{-1/2} Pu=0$ for some $u\in PL^2(\partial Y)$, then $\Lambda^{-1/4}Pu=0$, and the injectivity of $\Lambda_{\partial Y}^{-1/4}$ implies that  $Pu=0$, hence $u=0$, since $u\in PL^2(\partial Y)$. Thus $P\Lambda^{-1/2} P$ is injective. The same argument furnishes the injectivity $(1-P)\Lambda^{-1/2}_{\partial Y}(1-P)$. As a consequence, both $P\Lambda^{-1/2}_{\partial Y} P$ and  $(1-P)\Lambda^{-1/2}_{\partial Y}(1-P)$ have index zero. 
 \end{proof} 
 
 	
\bibliography{references.bib}
\end{document}